\documentclass[11pt]{article}

\usepackage[english]{babel}

\usepackage[letterpaper,top=2cm,bottom=2cm,left=3cm,right=3cm,marginparwidth=1.75cm]{geometry}

\usepackage{amsmath}
\usepackage{amsthm}
\usepackage{graphicx}
\usepackage[colorlinks=true, allcolors=blue]{hyperref}
\usepackage{amssymb}
\usepackage{comment}
\usepackage[nointlimits]{esint}
\usepackage{tikz}
\usepackage{bm}
\usepackage{authblk}


\newcommand{\RR}{\mathbb{R}}

\newcommand{\NN}{\mathbb{N}}
\newcommand{\OmegaT}{{\Omega_T}}


\newcommand{\dx}{\mathrm{dx}}

\renewcommand{\div}{\mathrm{div}}

\newcommand{\dd}{\mathrm{d}}

\newcommand{\weakstar}{\overset{\ast}{\rightharpoonup}} 
\newcommand{\weak}{\rightharpoonup} 

\newcommand{\uvec}{\mathbf{u}} 
\newcommand{\Psivec}{\mathbf{\Psi}} 

\newcommand{\coup}{\alpha} 
\newcommand{\capi}{\kappa} 
\newcommand{\paracoup}{\beta} 
\newcommand{\Peff}{P_\alpha} 
\newcommand{\system}{PNSK } 
\newcommand{\Pgrowth}{\gamma} 
\newcommand{\osc}{\mathcal{Q}} 
\newcommand{\oP}{\overline{P_\alpha}} 
\newcommand{\curlyD}{\mathcal{D}} 
\newcommand{\curlyC}{\mathcal{C}} 
\newcommand{\tgamma}{{\tilde{\gamma}}}

\newcommand{\Bog}{\mathcal{B}}

\newtheorem{theorem}{Theorem}[section]
\newtheorem*{theorem*}{Theorem}
\newtheorem{proposition}[theorem]{Proposition}
\newtheorem{corollary}[theorem]{Corollary}
\newtheorem{lemma}[theorem]{Lemma}

\theoremstyle{remark}
\newtheorem{remark}[theorem]{Remark}

\theoremstyle{definition}
\newtheorem{definition}[theorem]{Definition}

\numberwithin{equation}{section}

\title{Effective Equations for a Compressible Liquid-Vapor Flow Model with Highly Oscillating Initial Density}
\author[1]{Christian Rohde\thanks{\href{mailto:christian.rohde@mathematik.uni-stuttgart.de}{christian.rohde@mathematik.uni-stuttgart.de}}}
\author[1]{Florian Wendt\thanks{\href{mailto:florian.wendt@mathematik.uni-stuttgart.de}{florian.wendt@mathematik.uni-stuttgart.de}}}
\affil[1]{Institute of Applied Analysis and Numerical Simulation,\protect \\Pfaffenwaldring~57, D-70569 Stuttgart, Germany}
\date{}

\begin{document}
\maketitle

\begin{abstract}
We derive and justify  a new effective model for a compressible viscous liquid-vapor flow on a spray-like scale, i.e., for settings with a large  number of phase boundaries.
As a model on the detailed scale, we start from a parabolic relaxation of the Navier--Stokes--Korteweg system. We consider a sequence of initial data where the sequence of initial densities is assumed to be highly oscillating mimicking the high number of phase boundaries initially.
Then, we consider a sequence of finite energy weak solutions corresponding to the sequence of initial data.
Anticipating that the effective equations are found in the limit of infinitely many initial phase changes, we interpret the densities as Young measures and prove the convergence of the sequence of solutions to the effective model. The effective model consists of a deterministic part for the fluid's hydrodynamic quantities and a kinetic equation for the limit Young measure encoding the mixing dynamics.
By characterizing the Young measure with the corresponding cumulative distribution function, we rewrite the kinetic equation for the Young measure into a kinetic equation for the cumulative distribution function such that the resulting equations are accessible by standard approximation methods.
\end{abstract}
\noindent{\textbf{Keywords:} }{Navier--Stokes--Korteweg equations, compressible two-phase flow, homogenization, finite energy weak solution, Young measure.}

\vspace{2mm}

\noindent{\textbf{Mathematics Subject Classification 2020:} }{35Q35, 76M50, 76N06, 76T10.}
\section{Introduction}\label{Sec : Introduction}
We consider a bounded domain $\Omega \subseteq \RR^3$ with regular boundary that is occupied by a homogeneous compressible viscous fluid that can occur in a liquid and a vapor phase.
We assume that the dynamics of this  homogeneous two-phase fluid are given by the Navier--Stokes--Korteweg (NSK) equations \cite{AMW}.
These equations model the two-phase fluid with a diffuse interface, i.e., the interface between the liquid and the vapor phase is assumed to be a small region of positive Lebesgue-measure and the fluid's quantities vary possibly rapidly but smoothly over this region.
Let us assume that we are given some positive time $T \in (0,\infty)$ and let us denote $\Omega_T:=(0,T)\times\Omega$.
Then, the unknowns of the NSK equations are the fluid's density $\rho \colon [0,T) \times \Omega \to \RR_{\geq 0}$ and the fluid's velocity $\uvec \colon [0,T) \times \Omega \to \RR^3$ that obey 
\begin{equation}\label{NSK}
    \begin{aligned}
        &\partial_t \rho + \div(\rho \uvec) = 0 &&\text{in } \OmegaT,\\[0.5em]
        &\partial_t (\rho \uvec) + \div(\rho \uvec \otimes \uvec) + \nabla P(\rho) -\mu \Delta \uvec - (\lambda + \mu ) \nabla(\div \uvec) - \kappa \rho \nabla \Delta \rho = 0 &&\text{in } \OmegaT,
    \end{aligned}
\end{equation}
subject to the initial conditions
\begin{equation*}
    \rho(0,\cdot) = \rho^0,\quad \uvec(0,\cdot) = \uvec^0 \quad \text{in } \Omega,
\end{equation*}
for given functions $\rho^0 \colon \Omega \to \RR_{\geq 0}$, $\uvec^0\colon \Omega \to \RR^3$.
In the sequel we impose no-slip boundary conditions.
Note that we can deliberately allow for the density to admit vacuum states because we use the framework of finite energy weak solutions in this work (cf.\ Definition~\ref{Def:Renormalized Finite Energy Weak Solutions}).
In \eqref{NSK}, the parameters  $\mu,\lambda>0$ denote the constant viscosity coefficients, $\kappa>0$ denotes the constant capillarity coefficient and $P \colon [0,\infty) \to [0,\infty)$ denotes the pressure function.
If the pressure function $P$ satisfies $P(0)=0$ and admits two positive constants $0<r_1<r_2$ such that $P$ is monotonically decreasing on $(r_1,r_2)$ and monotonically increasing on $[0,r_1]\cup [r_2,\infty)$, then we call $P$ a pressure function of Van-der-Waals type.
For such a pressure function, we call the fluid's state vapor, spinodal and liquid, if the  density attains its value in $[0,r_1]$, $(r_1,r_2)$ and $[r_2,\infty)$, respectively.
In that sense, we consider the NSK equations to be able to describe phase transition effects if the pressure function $P$ is given of Van-der-Waals type (cf.\ Figure~\ref{P VdW Type figure}).
However, if the number of phase boundaries is very high (e.g. in a spray), the NSK model becomes computationally unfeasible due to the rapid oscillations in the fluid's quantities as for instance in the  density.
\par
In this paper, we are concerned with the rigorous justification of an effective system for a compressible viscous liquid-vapor flow described by the NSK equations with a pressure function of Van-der-Waals type in precisely that regime.
This effective system describes the two-phase fluid on a larger scale, but keeps information from the detailed scale.
As we shall see in the sequel, such an effective model can be seen as an approximation of the NSK model with a highly oscillating initial density.
One possibility to find and rigorously justify such an equation is to investigate the propagation of initial density oscillations \cite{HB_NEW,HI_P,SE,TZ}.
More specifically, we start from a sequence of initial data ${(\rho_{n}^0,u_n^0)}_{n\in\NN}$, where $n \in \NN$ should display the number of phase boundaries that we have initially.
Due to the high number of phase boundaries, we expect that the sequence of initial densities ${(\rho_n^0)}_{n\in\NN}$ is highly oscillating between the vapor's density and the liquid's density.
After constructing an appropriate corresponding sequence of solutions ${(\rho_n,\uvec_n)}_{n\in\NN}$, it is then reasonable to assume that the effective equations are found in the limit $n \to \infty$, i.e., in the limit where initially the number of phase boundaries tends to infinity.
Due to the hyperbolic character of the continuity equation $\eqref{NSK}_1$, we expect that oscillations present in the initial densities propagate in time and therefore, that the sequence of densities will not converge in any strong topology.
In order to obtain a closed limit system, the non-linear nature of the pressure function $P$ forces us to interpret the density as a Young measure. 
Thus, assuming for the moment that we are able to pass to the limit $n \to \infty$ in the NSK equations, the effective model will consist of a deterministic part for the fluid's hydrodynamic quantities and a kinetic equation for the Young measure that encodes the mixing dynamics and closes the model (cf.\ Theorem~\ref{thm: Main Result}).
\par
With this methodology, a first rigorous homogenization results was obtained in \cite{SE} for the 1D compressible Navier--Stokes equations for one-phase fluids with constant viscosity coefficient, and a result for the 3D case with constant viscosity coefficients but keeping the analysis formal.
Concerning the density dependent viscosity case, this methodology was also used in \cite{HB_NEW}.
There, the authors justified rigorously a one-velocity Baer--Nunziato model as an effective model by assuming that the Young measure is initially given by a convex combination of two Dirac-measures.
More specifically, they showed by a weak-strong uniqueness argument that the kinetic equation propagates this specific structure in time and that the kinetic equation reduces to a Baer--Nunziato model.
In \cite{LT_CONT} the authors extended the methodology to a two-component flow where each component was allowed to admit its own pressure functions and its own constant viscosity coefficient.
This problem was also tackled in \cite{LT_DISC} with a semi-discrete approach.
At this point we also refer to \cite{HSH_I,HSH_II}, where the authors justified a one-velocity Baer--Nunziato model as an effective model for a bubbly flow incorporating surface tension effects.
Concerning the 3D case with constant viscosity coefficients, similar results were obtained in \cite{HB_NOTE, BR_HU, HI_P}.
At this point we also refer to \cite{PS} for a homogenization result in terms of a cumulative distribution function and without the characterization of the Young measure as a convex combination of Dirac-measures.
\par
However, all these results do not account for phase transition effects in the detailed scale model.
In the 1D case, a homogenization result that accounts for phase transition effects in the detailed scale model was recently obtained in \cite{RW}. 
There, the detailed scale model was chosen as a non-local approximation of the NSK equations and the viscosity and capillarity coefficients were kept constant.
It is the goal of this paper to provide a similar result for the NSK equations in the 3D case generalizing in some sense the results in \cite{HI_P}.
However, the NSK equations $\eqref{NSK}$ seem hardly accessible for the investigation of the propagation of initial density oscillations due to a third-order differential operator acting on the density in the momentum equation.
To overcome this remedy, we choose a parabolic approximation of the NSK equations $\eqref{NSK}$ proposed in \cite{HKMR}, that we call the parabolic Navier--Stokes--Korteweg (PNSK) equations from now on.
This model substitutes the third-order differential operator by a first-order differential operator by introducing an additional unknown $c$ that satisfies a parabolic equation.
More precisely, in the parabolic NSK model, the fluid's dynamics are described by the density $\rho \colon [0,T) \times \Omega \to \RR_{\geq 0}$, the  velocity $\uvec \colon [0,T) \times \Omega \to \RR^3$ and the order parameter $c \colon [0,T) \times \Omega \to \RR$ that satisfy 
\begin{equation}\label{NSK_parabolic}
        \begin{aligned}
            &\partial_t\rho + \div(\rho \uvec) = 0  &&\text{in } \Omega_T,\\[0.5em]
            &\partial_t(\rho \uvec) + \div(\rho \uvec \otimes \uvec) + \nabla P(\rho)
            - \mu \Delta \uvec - (\lambda + \mu) \nabla(\div\uvec) - \coup \rho\nabla (c - \rho) = 0 
             &&\text{in } \Omega_T,\\[0.5em]
            &\paracoup \partial_t c - \capi \Delta c + \coup (c-\rho) = 0 &&\text{in } \Omega_T,
        \end{aligned}
\end{equation}
subject to initial conditions
\begin{equation}\label{NSK Parabolic Initial Conditions}
    \rho(0,\cdot) = \rho^0, \quad 
    \uvec(0,\cdot)= \uvec^0, \quad 
    c(0,\cdot)=c^0 \quad 
    \text{in } \Omega,
\end{equation}
for given functions $\rho\colon \Omega \to \RR_{\geq 0}$, $\uvec\colon \Omega \to \RR^3$, $c \colon \Omega \to \RR$ and the boundary conditions
\begin{equation}\label{NSK Parabolic Boundary Conditions}
    \uvec_{\partial \Omega} = 0 ,\quad \nabla c \cdot \mathbf{n}_{\partial\Omega } = 0
    \quad \text{on}\quad  [0,T] \times \partial\Omega.
\end{equation}
Here, the quantities $\mu,\lambda,P$ are defined as before and the coupling coefficients $\coup,\paracoup>0$ are positive constants.
Formally, we notice that in the relaxation limit $\coup \to \infty$ and $\beta \to 0$ the \system equations $\eqref{NSK_parabolic}$ approach the NSK equations $\eqref{NSK}$. 
This justifies to use $\eqref{NSK_parabolic}$ as an approximate system to $\eqref{NSK}$ if $\alpha$ and $\beta$ is sufficiently large and small, respectively.
We refer to \cite{HKMR} for a numerical investigation of the \system equations and to \cite{CRW} for a rigorous convergence result for the relaxation limit.
See also \cite{GLT} for a rigorous convergence result for $\coup\to\infty$ in the case $\paracoup=0$.
\\
Introducing the artificial pressure function 
\begin{align}\label{Definition Artificial Pressure}
    \Peff\colon [0,\infty) \to [0,\infty), \quad r \mapsto P(r) + \frac{\coup}{2} r^2,
\end{align}
we can rewrite the momentum equation $\eqref{NSK_parabolic}_2$ as
\begin{align*}
    \partial_t(\rho \uvec)
    +
    \div(\rho\uvec \otimes \uvec)
    +
    \nabla \Peff(\rho) 
    -
    \mu \Delta \uvec
    -
    (\lambda + \mu) \nabla (\div \uvec)
    -
    \coup \rho \nabla c
    =
    0.
\end{align*}
In particular, for a pressure function of Van-der-Waals type, we can choose $\coup$ large enough, so that the artificial pressure function $\Peff$ becomes monotonically increasing (cf.\ Figure~\ref{P VdW Type figure}).
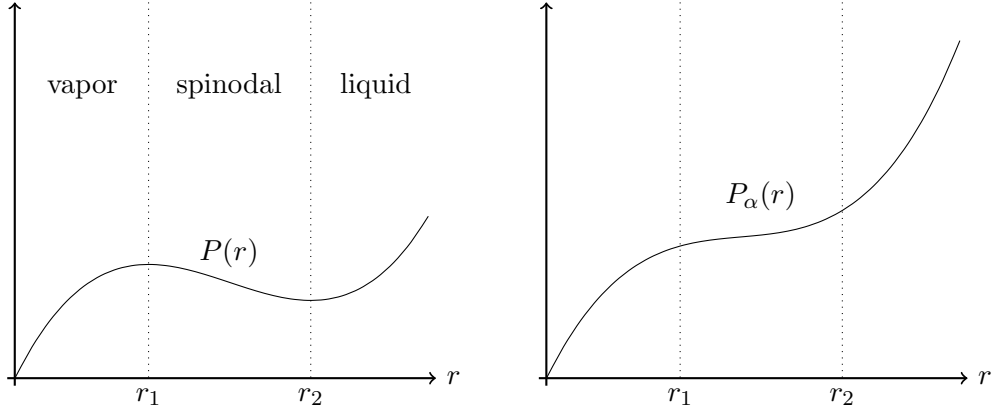
\begin{figure}
    \centering
        \begin{tikzpicture}
                \draw[->, thick](-0.1,0) -- (5.6,0) node[right] {$r$};
                \draw[->, thick](0,-0.1) -- (0,5);
                \draw[scale=3.234, domain=0:1.7, smooth, variable=\x, black] plot ({\x}, {
                (\x-0.55)^3 - (\x-0.55)^2 + 0.468
                });
                \draw[dotted] (1.7787,5.0) -- (1.7787,0) node[below] {$r_1$};
                \draw[dotted] (3.9348,5.0) -- (3.9348,0) node[below] {$r_2$} ;
                \draw (0.9,4.1) node[below] {vapor};
                \draw (2.85,4.2) node[below] {spinodal};
                \draw (4.8,4.2) node[below] {liquid};
                \draw (2.85,2) node[below] {\color{black}$P(r)$};
        \end{tikzpicture}
        \qquad 
                \begin{tikzpicture}
                \draw[->, thick](-0.1,0) -- (5.6,0) node[right] {$r$};
                \draw[->, thick](0,-0.1) -- (0,5);
                \draw[scale=3.234, domain=0:1.7, smooth, variable=\x, black] plot ({\x}, {
                (\x-0.55)^3 - (\x-0.55)^2 + 0.468 + 0.25*\x^2
                });
                \draw[dotted] (1.7787,5.0) -- (1.7787,0) node[below] {$r_1$};
                \draw[dotted] (3.9348,5.0) -- (3.9348,0) node[below] {$r_2$} ;
                \draw (2.85,2.75) node[below] {\color{black}$\Peff(r)$};
        \end{tikzpicture}
        \caption{Example of a pressure function of Van-der-Waals type. Left: the original pressure function $P$. Right: the corresponding artificial pressure function $\Peff$ for a fixed coupling coefficient $\coup>0$.}\label{P VdW Type figure}
\end{figure}
The monotonicity of $\Peff$ renders the underlying first-order system hyperbolic and thus facilitates the numerical treatment of the \system equations, see \cite{HKMR} for more details.
However, in this work, we will not require the monotonicity of the pressure function $\Peff$.
\par
Investigating the propagation of initial density oscillations under $\eqref{NSK_parabolic}$, we are able to rigorously justify a new effective model for a compressible viscous liquid-vapor flow that is modeled by the \system equations on the detailed scale. 
In this effective model, we search for a Young measure $\nu$, an order parameter $c$ and a velocity $\uvec$ that satisfy
\begin{equation}\label{Introduction: Effective Equation}
    \begin{aligned}
        &\partial_t\nu 
        +
        \div( \nu \uvec)
        +
        \partial_\xi\bigl( ( \osc - \div \uvec)\xi \nu \bigr)
        +
        \bigl( \osc - \div\uvec\bigr) \nu
        = 0
        &&\text{in } \Omega_T \times \RR,
        \\[0.5em]
        &\partial_t(\rho \uvec) 
        + 
        \div(\rho \uvec \otimes \uvec) 
        + 
        \nabla \oP 
        - 
        \mu \Delta \uvec 
        - 
        (\lambda + \mu)\nabla(\div \uvec) 
        - 
        \coup \rho \nabla c = 0
        &&\text{in } \Omega_T,
        \\[0.5em]
        &\paracoup\partial_t c 
        - 
        \capi \Delta c 
        + 
        \coup (c-\rho) 
        = 0
        \quad &&\text{in } \Omega_T,
    \end{aligned}
\end{equation}
with the closure
\begin{align*}
    \rho(t,x):= \int_{[0,\infty)} \xi \, \dd \nu_{(t,x)}(\xi) ,\quad
    \oP(t,x) := \int_{[0,\infty)} \Peff(\xi) \, \dd \nu_{(t,x)}(\xi)
\end{align*}
for almost all $(t,x) \in \Omega_T$ and
\begin{align*}
        \osc(t,x,\xi):= \frac{\oP(t,x)-\Peff(\xi)}{\lambda + 2 \mu} 
\end{align*}
for almost all $(t,x,\xi) \in \Omega_T \times \RR$ and subject to the boundary conditions $\eqref{NSK Parabolic Boundary Conditions}$ and the initial conditions
\begin{equation}\label{Introduction Effective Equations Initial Conditions}
    \uvec(0)=\uvec^0, \quad 
    c(0) = c^0, \quad 
    \nu(0) = \nu^0 \quad \text{in } \Omega,
\end{equation}
for given functions $\uvec^0\colon \Omega \to \RR^3$, $c^0 \colon \Omega \to \RR$ and a given Young measure $\nu^0 \colon \Omega \to \mathcal{P}(\RR)$ (cf.\ Definition~\ref{Def : Young Measure}).
We emphasize that the effective model $\eqref{Introduction: Effective Equation}$ is a closed system for the unknowns $(\nu,\uvec,c)$ as the density $\rho$ and the pressure function $\oP$ are completely described by the Young measure $\nu$.
\par
Our analysis justifying \eqref{Introduction: Effective Equation} as an effective model relies on an improved \emph{a priori} estimate for the artificial pressure function $\Peff$ and a weak continuity result for the effective viscous flux.
These results are known from the theory related to the compressible Navier--Stokes equations and form the heart of the proof for the global-in-time existence of finite energy weak solutions to the corresponding initial-boundary value problem (see e.g.~\cite{FNP,L}).
Since the \system equations admit the additional nonlinear contribution $\rho \nabla c$ in the momentum equation, it is \emph{a priori} unclear whether these results also hold for the \system equations.
As a mathematical novelty, we derive an improved pressure estimate (cf.\ Lemma~\ref{lem: Improved Space Time Pressure}) and the weak continuity of the effective viscous flux (cf.\ Lemma~\ref{Lemma : Preliminary for Main Proof}) for the \system equations by interpreting this additional term as a force term and exploiting parabolic regularity estimates.
\par
In order to make the kinetic equation for the Young measure accessible for standard approximation methods, we switch our point of view by considering instead of the Young measure $\nu$ its corresponding cumulative distribution function $f$ that is defined for almost all $(t,x) \in \Omega_T$ through
\begin{equation*}
    f(t,x,\xi):= \nu_{(t,x)}((-\infty,\xi]) \quad \text{for any } \xi \in \RR.
\end{equation*}
We are then able to show, that the kinetic equation on $\nu$ implies a kinetic equation for $f$ in the form
\begin{equation}\label{Intro : Kinetic Equation in Terms of f}
    \partial_t f 
    +
    \div\bigl( f \uvec\bigr) 
    -
    \partial_\xi \Bigl( \xi f \div\uvec - \xi \mathcal{M}[f] \Bigr) = 0
    \quad \text{in } \Omega_T \times \RR,
\end{equation}
with
\begin{equation}\label{Intro : Kinetic Equations in Terms of f closure M}
    \mathcal{M}[f](t,x,\xi) := \int_{[0,\xi]} \osc(t,x,\eta) \, \dd f(t,x,\eta)
\end{equation}
for almost all $(t,x,\xi) \in \Omega_T \times \RR$ and
\begin{equation}\label{Intro : Kinetic Equations in Terms of f closure rho and P}
    \rho(t,x) = \int_{[0,\infty)} \xi \, \dd f (t,x,\xi),
    \quad 
    \oP(t,x) = \int_{[0,\infty)} \Peff(\xi)  \, \dd f(t,x,\xi)
\end{equation}
for almost all $(t,x) \in \Omega_T$, where the integral with respect to $\dd f(t,x,\cdot)$ denotes the Lebesgue--Stieltjes integral with respect to $f(t,x,\cdot)$.
We emphasize that $\eqref{Introduction: Effective Equation}_2$, $\eqref{Introduction: Effective Equation}_3$ and $\eqref{Intro : Kinetic Equation in Terms of f}$ together with $\eqref{Intro : Kinetic Equations in Terms of f closure M}$ and $\eqref{Intro : Kinetic Equations in Terms of f closure rho and P}$ is a closed system for the unknowns $(f,\uvec,c)$.
\\ \par
This paper is structured as follows.
In Section~\ref{Sec : Finite Energy Weak Solutions}, we introduce and motivate the concept of finite energy weak solutions.
In particular, we introduce a class of pressure functions that allows us to describe concisely for which pressure functions our main result Theorem~\ref{thm: Main Result} applies.
In Section~\ref{Sec : Main Result}, we briefly introduce the concept of Young measures and state our main result.
In Section~\ref{Sec : Uniform Estimates} we derive appropriate \emph{a priori} estimates for the \system equations that are necessary to perform the limit procedure later on. 
Section~\ref{Sec : Proof of the Main Theorem} is devoted to the full proof of our main result Theorem~\ref{thm: Main Result}.
We close this work with a conclusion and an outlook in Section~\ref{Sec : Conclusions}.
For the sake of completeness, we provide in the Appendix~\ref{Sec : Compactness Results and Weak Continuity in Time} some known compactness results that are used in this work.

\section*{Notations}
Let $d \in \NN$ and $Q  \subseteq \RR^d$ a domain.
We denote by $C^0(Q)$ the space of continuous functions defined on $Q$.
We denote by $C^0_c(Q)$ the space of continuous functions on $Q$ with compact support and by $C^0_0(Q)$ the closure of the space $C^0_c(Q)$ under the norm 
\begin{equation*}
    \|f\|_{C^0(Q)} := \sup\limits_{x \in Q } |f(x)| \quad (f \in C^0_c(Q)).
\end{equation*}
For $k\in \NN \cup \{0\}$, we denote by $C^k(Q)$ and $C^k_c(Q)$ the space of $k$-times continuously differentiable functions and the space of $k$-times continuously differentiable functions with compact support in $Q$, respectively.
We also denote $C^\infty(Q) := \bigcap\limits_{k=0}^\infty C^k(Q)$ and $C^\infty_c(Q) := \bigcap\limits_{k=0}^\infty C^k_c(Q)$.
In the context of distributions, we sometimes write $\mathcal{D}(Q):= C^\infty_c(Q)$. 
We denote the space of distributions on $Q$ as $\mathcal{D}^\prime(Q)$.
For $p \in [1,\infty]$, we denote the Lebesgue spaces on $Q$ as $L^p(Q)$ and the $L^p(Q)$-norm as $\|\cdot\|_{L^p(Q)}$.
Thereby, we denote for some function $f \in L^1(Q)$ its mean value on $Q$ by 
\begin{equation*}
    \fint_Q f := \frac{1}{|Q|}\int_Q f \, \dd x.
\end{equation*}
We denote the $k$-th Sobolev spaces as $W^{k,p}(Q)$ and the $W^{k,p}(Q)$-norm as $\|\cdot\|_{W^{k,p}(Q)}$. 
For $q \in [1,\infty)$, we denote by $W^{k,q}_0(Q)$ the closure of the space $C^\infty_c(Q)$ under the norm $\|\cdot\|_{W^{k,p}(Q)}$.
If $q = 2$, we sometimes write $H^k(Q)$ and $H^k_0(Q)$ instead of $W^{k,2}(Q)$ and $W^{k,2}_0(Q)$, respectively.
For $s \in (1,\infty)$, we denote by $W^{-1,s}(Q)$ the dual space of $W^{1,s^\prime}_0(Q)$, where $s^\prime$ denotes the conjugate Hölder exponent of $s$.
To ease the notation, we use sometimes for vector-valued or matrix-valued function spaces the same notation as in the scalar case. 
For instance, we write $L^p(Q)$ instead of $L^p(Q;\RR^3)$ if no ambiguities appear.
For some Banach space $X$, we denote the Bochner spaces on $Q$ ranging into $X$ as $L^p(Q;X)$ and its Bochner norms as $\|\cdot\|_{L^p(Q;X)}$.
Let $X^*$ be the dual space to $X$. Then we denote the space of all weakly-$*$ measurable $L^\infty$-functions on $Q$ mapping into $X^*$ as
\begin{align*}
    L^\infty_{\mathrm{w}^\ast}(Q;X^*):= 
    \biggl\{ f\colon Q \to X^*\Big|
    \,\,&Q\ni x \mapsto \langle f(x),v\rangle_{{X^\ast},\,X}\,\,\, \text{is Lebesgue-measurable for any } v \in X,\\
    &\Bigl[x \mapsto \|f(x)\|_{X^*}\Bigr] \in L^\infty(Q)
    \biggr\}. 
\end{align*}
The norm on $L^\infty_{\mathrm{w}^*}(Q;X^*)$ is denoted by
\begin{equation*}
    \|f\|_{L^\infty_{\mathrm{w}^*}(Q;X^*)} := \mathrm{ess}\;\mathrm{sup}_{x \in Q} \|f(x)\|_{X^*} 
\end{equation*}
for $f \in L^\infty_{\mathrm{w}^*}(Q;X^*)$, and for a sequence ${(f_n)}_{n\in\NN} \subseteq L^\infty_{\mathrm{w}^*}(Q;X^*)$, we write
\begin{equation*}
    f_n \weakstar f \quad \text{in }  L^\infty_{\mathrm{w}^*}(Q;X^*) \quad \text{as } n\to\infty,
\end{equation*}
if 
\begin{equation*}
    \int_Q\langle f_n(x), \varphi(x)\rangle_{X^*,\,X}\,\, \dd x \xrightarrow{n\to\infty} \int_Q\langle f(x),\varphi(x)\rangle_{X^*,\,X}\, \,\dd x
\end{equation*}
for any test function $\varphi \in L^1(Q;X)$.
For some positive time $T>0$, we define the space of weakly continuous functions on $[0,T]$ ranging into $L^q(Q)$ as
\begin{align*}
    C_{\mathrm{w}}([0,T];L^q(Q)) 
    :=
    \biggl\{
        f \colon [0,T] \to L^q(Q) \Big| &\Bigl[t\mapsto \int_Q f(t)\phi\, \dd x\Bigr]\in C([0,T]) 
        \,\,\,\text{for any} \,\,\,
        \phi \in L^{q^\prime}(Q), \\
        &\sup\limits_{t\in[0,T]}\|f(t)\|_{L^q(Q)} <\infty \biggr\},
\end{align*}
where $q^\prime \in (1,\infty]$ denotes the conjugate Hölder exponent to $q$.
For a sequence $(f_n) \subseteq  C_{\mathrm{w}}([0,T];L^q(Q))$ and $f \in C_{\mathrm{w}}([0,T];L^q(Q))$, we write
\begin{equation*}
    f_n \xrightarrow{n\to\infty} f \quad \text{in } C_{\mathrm{w}}([0,T];L^q(Q))
\end{equation*}
if
\begin{equation*}
    \sup\limits_{t\in[0,T]} \biggl| \int_Q f_n(t)\phi\, \dd x - \int_Q f(t)\phi \, \dd x \biggr| \xrightarrow{n\to\infty} 0
    \quad \forall \, \phi \in L^{q^\prime}(Q).
\end{equation*}
Finally, we denote by $\mathcal{M}(\RR)$ the space of all signed finite Radon measures on $\RR$ that can be identified with the dual space $\bigl( C^0_0(\RR) \bigr)^*$ and by $\mathcal{P}(\RR)$ the set of all probability measures on $\RR$.

\section{Admissible Pressure Functions and Weak Solutions}\label{Sec : Finite Energy Weak Solutions}
We shall work with the concept of finite energy weak solutions.
The global-in-time existence of finite energy weak solutions was shown for the isentropic compressible Navier--Stokes equations with constant viscosity coefficients in \cite{L} for an adiabatic pressure exponent $\gamma > \frac{9}{5}$ in spatial dimension three.
These results were then extended in \cite{FNP} to pressure exponents $\gamma > \frac{3}{2}$ and in \cite{F} for a  class of non-monotone pressure functions.
In all these results, the growth rate of the pressure function towards infinity plays a crucial role, as it provides an \emph{a priori} estimate for the density in an appropriate Lebesgue space.
In particular, this growth rate is an integral part of the definition of finite energy weak solutions.
Thus, before introducing the concept of finite energy weak solutions for the \system model, let us introduce the following definition with that we describe in the sequel concisely for which class of pressure functions our homogenization result applies.

\begin{definition}[Admissible pressure function]
    We call a function $P \in C^1([0,\infty))$ admissible with growth rate $\Pgrowth\in(1,\infty)$ if
    \begin{equation}\label{Definition Admissible Pressure Function Growth Rate}
        P\geq 0,
        \quad 
        P(0) = 0,
        \quad 
        \lim\limits_{r \to \infty}\frac{P^\prime(r)}{r^{\Pgrowth-1}} = P_\infty,
    \end{equation}
    for some positive constant $P_\infty >0$.
\end{definition}
\begin{remark}
    The class of admissible pressure functions accounts for a broad variety of monotone and non-monotone pressure functions. 
    In particular, any pressure function of Van-der-Waals type satisfying the third condition in $\eqref{Definition Admissible Pressure Function Growth Rate}$ for some $\gamma \in (1,\infty)$ is admissible with growth rate $\gamma$.
\end{remark}
For some admissible pressure function $P\colon[0,\infty) \to [0,\infty)$ with growth rate $\gamma \in (1,\infty)$, we assign a pressure potential $W\colon [0,\infty)\to[0,\infty)$ via
\begin{align}\label{Definition Pressure Potential}
    W(r):= r\int_1^r \frac{P(z)}{z^2}\, \dd z
    \qquad \text{for } z\in[0,\infty).
\end{align}
We readily verify that $W\in C^0([0,\infty))\cap C^2((0,\infty))$ and that
\begin{align}\label{Introdution Relation Presure Potential}
    P(r)=W^\prime(r)r-W(r) \qquad \text{for } r \in (0,\infty).
\end{align}
Furthermore, due to the third relation in $\eqref{Definition Admissible Pressure Function Growth Rate}$, we have that $W$ satisfies the bounds
\begin{align}\label{bound pressure potential}
    r^\gamma \leq c_1 + c_2 W(r),
    \quad 
    |W(r)|\leq c_3 + c_4 r^\gamma
    \quad \text{for } r \in [0,\infty),
\end{align}
and some positive constants $c_1,c_2,c_3,c_4\in (0,\infty)$ that do not depend on $r$.
By the same token, we have that $\Peff$ defined as in $\eqref{Definition Artificial Pressure}$ satisfies for $\tgamma:=\max\{2,\gamma\}$ the bounds
\begin{align}\label{bound for Peff}
    \Peff(r) \leq c_5 + c_6r^\tgamma, 
    \quad 
    r^\tgamma \leq c_7 + c_8 \Peff(r)
    \qquad \text{for } r \in [0,\infty),
\end{align}
and some positive constants $c_5,c_6,c_7,c_8>0$ that do not depend on $r$.\par
Let us now introduce the concept of finite energy weak solutions for the initial-boundary value problem to the \system equations $\eqref{NSK_parabolic}$--$\eqref{NSK Parabolic Boundary Conditions}$.
To motivate the definition of finite energy weak solutions, we identify the energy dissipation and derive certain \emph{a priori} bounds in terms of the initial data in a smooth framework.
Let us assume that we are given some classical solution $(\rho,\uvec,c)$ to $\eqref{NSK_parabolic}$--$\eqref{NSK Parabolic Boundary Conditions}$ emanating from the initial data $(\rho^0,\uvec^0,c^0)$ and existing on $[0,T]$.
First, we obtain for any $b \in C^1(\RR)$ from the continuity equation $\eqref{NSK_parabolic}_1$ that 
\begin{align}\label{Introduction Renormalized Continuity Equation}
    \partial_t b(\rho) + \div(b(\rho) \uvec) + \bigl( b^\prime(\rho) \rho - b(\rho) \bigr) \div \uvec = 0
\end{align}
holds in $(0,T)\times\Omega$.
Moreover, multiplying the momentum equation $\eqref{NSK_parabolic}_2$ by the velocity $\uvec$, integrating in space over $\Omega$ using the continuity equation $\eqref{NSK_parabolic}_1$ and integration by parts yields for any $t \in (0,T)$ the relation
\begin{align}\label{Formal Derivation Energy Dissipation Eq1}
    &\frac{\dd}{\dd t}\int_\Omega \frac{1}{2}\rho |\uvec|^2 \, \dd x
    -
    \int_\Omega P(\rho) \div \uvec \, \dd x
    -
    \int_\Omega \coup \partial_t \rho (c - \rho) \, \dd x\nonumber\\
    &\qquad+
    \int_\Omega \bigl(\mu \nabla \uvec : \nabla \uvec + (\lambda + \mu) |\div\uvec|^2 \bigr) \, \dd x 
    =
    0.
\end{align}
Relation $\eqref{Introdution Relation Presure Potential}$, $\eqref{Introduction Renormalized Continuity Equation}$,  and the boundary conditions $\eqref{NSK Parabolic Boundary Conditions}$ imply
\begin{align}\label{Formal Derivation Energy Dissipation Eq2}
    -\int_\Omega P(\rho) \div \uvec \, \dd x 
    = \frac{\dd}{\dd t} \int_\Omega W(\rho) \, \dd x.
\end{align}
Using the parabolic equation $\eqref{NSK_parabolic}_3$, the boundary conditions $\eqref{NSK Parabolic Boundary Conditions}$ and integration by parts leads to
\begin{align}\label{Formal Derivation Energy Dissipation Eq3}
    -\int_\Omega \coup \partial_t \rho (c-\rho)\, \dd x
    &= \int_\Omega \coup \partial_t(\rho- c) (\rho - c) \, \dd x
    - \int_\Omega  \partial_t c  \coup (c - \rho) \, \dd x\nonumber\\
    &=\frac{\dd}{\dd t} \int_\Omega \frac{\coup}{2} |\rho-c|^2 \, \dd x
    +\int_\Omega \paracoup |\partial_t c|^2 \, \dd x
    - \int_\Omega \capi \partial_t c \Delta c\, \dd x\nonumber\\
    &=
    \frac{\dd}{\dd t} \int_\Omega\Bigl( \frac{\coup}{2}|\rho-c|^2 + \frac{\capi}{2}|\nabla c|^2\Bigr) \, \dd x 
    + \int_\Omega \paracoup |\partial_t c|^2 \, \dd x.
\end{align}
Combining $\eqref{Formal Derivation Energy Dissipation Eq1}, \eqref{Formal Derivation Energy Dissipation Eq2}$ and $\eqref{Formal Derivation Energy Dissipation Eq3}$ yields the energy dissipation
\begin{align}\label{Introduction Energy Dissipation}
    \frac{\dd}{\dd t} E(t)
    + \int_0^t \int_\Omega \bigl( \mu \nabla \uvec : \nabla \uvec + (\lambda + \mu) |\div\uvec|^2 \bigr) \, \dd x \, \dd \tau
    + \int_0^t \int_\Omega \paracoup |\partial_t c|^2 \, \dd x \, \dd \tau = 0
\end{align}
for any $t \in (0,T)$ with the energy
\begin{align}\label{Introduction Energy}
    E(t) := \int_\Omega \Bigl( \frac{1}{2} \rho(t) |\uvec(t)|^2 + W(\rho(t)) + \frac{\coup}{2}|\rho(t)-c(t)|^2 + \frac{\capi}{2} |\nabla c(t)|^2 \Bigr) \, \dd x.
\end{align}
In $\eqref{Introduction Energy}$ we have used the notation for Bochner spaces on $(0,T)$ for the functions $\rho,\uvec$ and $c$ which are defined on the space-time cylinder $\OmegaT$.
We emphasize at this point that $\eqref{Introduction Energy Dissipation}$ implies that the \system system is thermodynamically admissible.
Integrating further relation $\eqref{Introduction Energy Dissipation}$ for fixed $t \in (0,T)$ over $[0,t]$ yields
\begin{align}\label{Introduction Int Energy Dissipation}
    &E(t) 
    + \int_0^t \int_\Omega \bigl( \mu \nabla \uvec : \nabla \uvec + (\lambda + \mu) |\div \uvec|^2 \bigr) \, \dd x \, \dd \tau 
    + \int_0^t \int_\Omega \paracoup |\partial_t c|^2 \, \dd x \, \dd \tau 
    = E_0
\end{align}
with 
\begin{align}\label{Introduction Initial Energy}
    E_0 := \int_\Omega \Bigl( \frac{1}{2} \rho^0 |\uvec^0|^2 + W(\rho^0) + \frac{\coup}{2} |\rho^0-c^0|^2 + \frac{\capi}{2} |\nabla c^0|^2 \Bigr)\, \dd x .
\end{align}
In the following estimates we denote by $\mathcal{I}_0>0$ a generic positive constant that may vary from line to line but only depends on the quantities
\begin{align*}
    \|\rho^0\|_{L^\tgamma(\Omega)},
    \,
    \|\uvec^0\|_{L^{\frac{2\tgamma}{\tgamma-1}}(\Omega)},
    \,
    \|c^0\|_{H^1(\Omega)},
    \,
    \mu,
    \,
    \lambda,
    \,
    \capi,
    \,
    \coup,
    \,
    \paracoup,
    \,
    T,
    \,
    |\Omega|,
\end{align*}
where $\tgamma:=\max\{2,\gamma\}$.
By $\eqref{bound pressure potential}$ and Hölder's inequality, we readily verify that
\begin{align*}
    E_0\leq \mathcal{I}_0.
\end{align*}
From $\eqref{bound pressure potential}$ and $\eqref{Introduction Int Energy Dissipation}$ we deduce 
\begin{align}
    &\|\sqrt{\rho}\uvec\|_{L^\infty(0,T;L^2(\Omega))}
    +
    \|\rho\|_{L^\infty(0,T;L^\gamma(\Omega))}
    +
    \|\rho-c\|_{L^\infty(0,T;L^2(\Omega))}
    +
    \|\nabla c\|_{L^\infty(0,T;L^2(\Omega))}\nonumber
    \\
    &\qquad +
    \|\uvec\|_{L^2(0,T;H^1(\Omega))}
    +
    \|\partial_tc\|_{L^2(\OmegaT)}
    \leq 
    \mathcal{I}_0.\label{Introduction A Priori Bds I}
\end{align}
Integrating the parabolic equation $\eqref{NSK_parabolic}_3$ in space over $\Omega$ and using the boundary conditions $\eqref{NSK Parabolic Boundary Conditions}$ yields for any $t \in (0,T)$ that
\begin{align*}
    \frac{\dd}{\dd t} \int_\Omega c(t)\, \dd x
    =
    -
    \frac{\coup}{\paracoup}
    \int_\Omega c(t) \, \dd x 
    +
    \frac{\coup}{\paracoup} \int_\Omega \rho(t) \, \dd x
    =
    -\frac{\coup}{\paracoup} \int_\Omega c(t) \, \dd x 
    +
    \frac{\coup}{\paracoup} \int_\Omega \rho^0 \, \dd x,
\end{align*}
where we have used the continuity equation to obtain the last identity.
From this relation, we deduce
\begin{align*}
    \biggl|\int_\Omega c(t) \, \dd x \biggr| \leq \mathcal{I}_0,
\end{align*}
which yields in combination with $\eqref{Introduction A Priori Bds I}$, Poincaré's inequality and Hölder's inequality
\begin{align}\label{Introduction A Priori Bds II}
    \|c\|_{L^\infty(0,T;H^1(\Omega))}
    +
    \|\rho\|_{L^\infty(0,T;L^\tgamma(\Omega))}
    +
    \|\rho\uvec\|_{L^\infty(0,T;L^{\frac{2\tgamma}{\tgamma+1}}(\Omega))}
    \leq \mathcal{I}_0.
\end{align}
Combining this estimate with standard parabolic regularity results (see e.g.~\cite[Chapter 7]{EPDE}), we obtain
\begin{align}\label{Introduction A Priori Bds III}
    \|c\|_{L^2(0,T;H^2(\Omega))} \leq \mathcal{I}_0.
\end{align}
The renormalized continuity equation $\eqref{Introduction Renormalized Continuity Equation}$, the energy dissipation $\eqref{Introduction Energy Dissipation}$ and the \emph{a priori} estimates $\eqref{Introduction A Priori Bds I}$--$\eqref{Introduction A Priori Bds III}$ motivate the following definition of finite energy weak solutions for the initial-boundary value problem to the \system equations $\eqref{NSK_parabolic}$--$\eqref{NSK Parabolic Boundary Conditions}$.
This definition corresponds to the definition of finite energy weak solutions for the initial-boundary value problem to the compressible Navier--Stokes equations, see e.g.~\cite{FNP,L}.
\begin{definition}[Finite energy weak solutions]\label{Def:Renormalized Finite Energy Weak Solutions}
    Let $\Omega\subseteq \RR^3$ denote a bounded domain with regular boundary and let $T,\mu,\lambda,\capi,\coup,\paracoup>0$. Suppose that $P$ is admissible with growth rate $\Pgrowth \in (1,\infty)$ and let $W \colon[0,\infty) \to [0,\infty)$ be defined as in $\eqref{Definition Pressure Potential}$.
    Assume that we are given some initial data
    \begin{align}\label{defi:renormalized:IC}
        \rho^0 \in L^{\tilde{\gamma}}(\Omega), \quad \rho^0\geq0 \quad \text{a.e.},\quad \uvec^0 \in L^{\frac{2\tgamma}{\tgamma-1}}(\Omega;\RR^3), \quad c^0 \in H^1(\Omega),
    \end{align}
    where $\tilde{\gamma}:=\max\{2,\gamma\}$.
    Then we call $(\rho,\uvec,c)$ a finite energy weak solution to $\eqref{NSK_parabolic}$--$\eqref{NSK Parabolic Boundary Conditions}$ existing on $[0,T]$ and emanating from the initial data $(\rho^0,\uvec^0,c^0)$, if the following holds:
    \begin{enumerate}
        \item We have the regularity
        \begin{alignat*}{2}
            &\rho \in C_{\mathrm{w}}([0,T];L^\tgamma(\Omega)), \quad &&\rho\geq 0 \quad \text{a.e.,}\\
            &\uvec \in L^2(0,T;H^1_0(\Omega;\RR^3)), \quad &&\rho\uvec \in C_{\mathrm{w}}([0,T];L^{\frac{2\tgamma}{\tgamma + 1}}(\Omega;\RR^3)),\\
            &c \in L^2(0,T;H^2(\Omega))\cap C([0,T];H^{1}(\Omega)), \quad &&\partial_t c \in L^2(\OmegaT),
        \end{alignat*}
        \item
        the continuity equation $\eqref{NSK_parabolic}_1$ is satisfied in the sense of integral identities
        \begin{equation}
        \begin{aligned}
            \int_0^\tau \int_\Omega \bigl( \rho \partial_t\varphi + \rho \uvec \cdot \nabla \varphi \bigr) \, \dd x \, \dd t 
            = 
            \int_\Omega \rho(\tau,\cdot)\varphi(\tau,\cdot)\, \dd x
            -\int_\Omega \rho^0 \varphi(0,\cdot) \, \dd x 
        \end{aligned}\label{Def: Renormalized Finite Energy Weak Solutions Continuity Eq}
        \end{equation}
        for any $\tau \in [0,T]$ and any test function $\varphi \in C^1_c([0,T]\times \RR^3)$,
        \item the momentum equation $\eqref{NSK_parabolic}_2$ is satisfied in the sense of integral identities
        \begin{equation}
        \begin{aligned}
            &\int_0^\tau \int_\Omega \bigl( \rho \uvec \cdot \partial_t \Psivec 
            + \rho \uvec \otimes \uvec : \nabla \Psivec 
            + \Peff(\rho) \div \Psivec \bigr)\, \dd x \, \dd t
            \\
            &\qquad- 
            \int_0^\tau \int_\Omega
            \bigl(
            \mu \nabla \uvec : \nabla \Psivec  
            +
            (\mu + \lambda ) \div \uvec \: \div \Psivec
            - 
            \coup \rho \nabla c \cdot \Psivec 
            \bigr)
            \, \dd x  \, \dd t 
            \\
            &=
            \int_\Omega (\rho\uvec)(\tau,\cdot) \cdot\Psivec(\tau,\cdot) \, \dd x
            -\int_\Omega \rho^0 \uvec^0 \cdot \Psivec(0,\cdot) \,\dx
        \end{aligned}\label{Def: Renormalized Finite Energy Weak Solutions Momentum Eq}
        \end{equation}
        for any $\tau \in [0,T]$ and any test function $\Psivec \in C^1_c([0,T]\times\Omega;\RR^3)$,
        \item the parabolic equation $\eqref{NSK_parabolic}_3$ is satisfied in the sense of integral identities
        \begin{equation}
        \begin{aligned}
            &\int_0^\tau \int_\Omega 
            \bigl(
            \paracoup c \partial_t \varphi 
            - \capi \nabla c \cdot \nabla \varphi 
            - \coup (c-\rho) \varphi 
            \bigr)\, \dd x \, \dd  t
            \\
            &=
            \int_\Omega \paracoup c(\tau,\cdot) \varphi(\tau,\cdot) \, \dd x
            -\int_\Omega \paracoup c^0 \varphi(0,\cdot)\,\dd x
        \end{aligned}\label{Def: Renormalized Finite Energy Weak Solutions Parabolic Eq}
        \end{equation}
        for any $\tau \in [0,T]$ and any test function $\varphi \in C^1_c([0,T]\times \RR^3)$,
        \item the renormalized continuity equation $\eqref{Introduction Renormalized Continuity Equation}$ is satisfied in the sense of integral identities 
        \begin{equation}
        \begin{aligned}
            &\int_0^T \int_\Omega
            \Bigl(
            b(\rho) \partial_t\varphi 
            + b(\rho)\uvec \cdot \nabla\varphi 
            - \bigl( b^\prime(\rho)\rho - b(\rho) \bigr)  \div\uvec \varphi 
            \Bigr)\, \dd x \, \dd t\\
            &\qquad =
            -\int_\Omega b(\rho^0) \varphi(0,\cdot) \, \dd x
        \end{aligned}\label{Def: Renormalized Finite Energy Weak Solutions Renormalized Continuity Eq}
        \end{equation}
        for any test function $\varphi \in C^1_c([0,T)\times\RR^3)$ and any function
        \begin{align*}
            b \in \mathcal{F}:=\Bigl\{ b \in C^1(\RR) \mid \exists M_b>0 \, : \, b^\prime(z) = 0 \quad \forall \, z \in [M_b,\infty) \Bigr \},
        \end{align*}
        \item the energy inequality
        \begin{equation}
        \begin{aligned}
            E(t) 
            + \int_0^t \int_\Omega \bigl(\mu \nabla \uvec : \nabla \uvec  + (\lambda + \mu) |\div\uvec|^2 \bigr) \, \dd x \, \dd \tau
            + \int_0^t \int_\Omega \paracoup |\partial_t c|^2 \, \dd x \, \dd \tau
            \leq E_0
        \end{aligned}\label{Def: Renormalized Finite Energy Weak Solutions Energy Eq}
        \end{equation}
        holds for almost all $t \in (0,T)$, where $E(t)$ and $E_0$ are defined as in $\eqref{Introduction Energy}$ and $\eqref{Introduction Initial Energy}$, respectively.
    \end{enumerate}
\end{definition}

The global-in-time existence of finite energy weak solutions to the initial-boundary value problem of the compressible Navier--Stokes equations with constant viscosity coefficients depends crucially on the asymptotic growth rate $\gamma$ of the pressure function which is intimately related to the available \emph{a priori} bounds for the density.
For an isentropic pressure law $r \mapsto r^\gamma$, the global-in-time existence was proved in \cite{FNP} for the regime $\gamma \in (\frac{3}{2},\infty)$, while the regime $\gamma \in (1,\frac{3}{2})$ is an open problem.
For the \system equations, the two additional quadratic contributions in the energy functional (cf.\ $\eqref{Introduction Energy}$) provide us for the whole regime $\gamma \in (1,\infty)$ an appropriate \emph{a priori} estimate for the $L^\infty(0,T;L^2(\Omega))$-norm of the density (cf.\ $\eqref{Introduction A Priori Bds II}$).
Therefore, we expect that the global-in-time existence of finite energy weak solutions to the initial-boundary value problem of the \system equations $\eqref{NSK_parabolic}$--$\eqref{NSK Parabolic Boundary Conditions}$ can be obtained for the whole regime $\gamma \in (1,\infty)$ by a non-trivial modification of the proof in \cite{FNP}.
However, providing such a global-in-time existence result would be out of scope of this paper.
Thus, in the homogenization result, we will postulate the global-in-time existence of finite energy weak solutions to the initial-boundary value problem of the \system equations $\eqref{NSK_parabolic}$--$\eqref{NSK Parabolic Boundary Conditions}$ subject to appropriate initial conditions (cf.\ Theorem~\ref{thm: Main Result}).

\section{Main Result}\label{Sec : Main Result}
In this section, we state our main result, which consists of a homogenization result for a sequence of finite energy weak solutions to the initial-boundary value problem $\eqref{NSK_parabolic}$--$\eqref{NSK Parabolic Boundary Conditions}$ with highly oscillating initial densities.
In order to state it, we need the concept of Young measures to deal with the strong oscillations that we expect in the sequence of densities.
For a detailed exposition on the topic of Young measures or parametrized measures we refer to \cite{PRG}.
For the sake of clarity, we describe in the following definition precisely what we understand by a Young measure in this work.

\begin{definition}[Young measure]\label{Def : Young Measure}
    Let $d \in \NN$ and let $Q \subseteq \RR^d$ be a domain.
    Then we call $\nu$ a Young measure on $\RR$ parametrized over $Q$ if $\nu$ lies in the set
    \begin{equation*}
        \mathcal{Y}(Q;\RR) 
        := 
        \Bigl\{ \nu \in L^\infty_{\mathrm{w}^*}(Q;\mathcal{M}(\RR)) \mid \nu_x \in \mathcal{P}(\RR) \text{ for almost all } x \in Q\Bigr\}. 
    \end{equation*}
\end{definition}

Some integrable function $f \in L^1(Q)$, where $Q \subseteq \RR^d$ denotes a domain, induces a Young measure $\nu_f \in \mathcal{Y}(Q;\RR)$ via
\begin{equation*}
    (\nu_f)_x := \delta_{f(x)} \quad \text{for a.e.} \quad x \in \Omega,
\end{equation*}
where $\delta_{f(x)}$ denotes the Dirac-measure concentrated in $f(x)$, see e.g.~\cite{PRG}.
For the rest of this work, we will keep the notation $\nu_f$ for the Young measure induced by some integrable function $f\in L^1(Q)$ as above.
In order to ease the notation in the sequel, we denote for some Young measure $\nu \in \mathcal{Y}(Q;\RR)$ and some test function $b \in C^0(\RR)$ by $\langle \nu, b\rangle$ the function
\begin{equation*}
    Q\ni x \mapsto \langle \nu_x,b\rangle = \int_\RR b(\xi) \, \dd \nu _x (\xi).
\end{equation*}
Now we are ready to state the main result of this paper.

\begin{theorem}\label{thm: Main Result}
    Let $T,\mu,\lambda,\capi,\coup,\paracoup>0$ and let $\Omega \subseteq \RR^3$ be a bounded domain with regular boundary.
    Assume that $P$ is admissible with growth rate $\Pgrowth \in (1,\infty)$ and let $\tgamma:=\max\{2,\gamma\}$.
    For $n \in \NN$, let initial data 
    \begin{align*}
        \rho_n^0 \in L^\tgamma(\Omega), \quad 
        \rho_n^0 \geq 0 \quad \text{a.e.}, \quad
        \uvec_n^0 \in L^{\frac{2\tgamma}{\tgamma-1}}(\Omega;\RR^3), \quad
        c^0_n \in H^1(\Omega)
    \end{align*}
    be given such that
    \begin{align}\label{thm: Main Result Result 1}
        \sup\limits_{n \in \NN} \|\rho^0_n\|_{L^\tgamma(\Omega)}
        +
        \sup\limits_{n \in \NN}\|\uvec^0_n\|_{L^{\frac{2\tgamma}{\tgamma-1}}(\Omega)}
        +
        \sup\limits_{n \in \NN}\|c^0_n\|_{H^1(\Omega)} < \infty,
    \end{align}
    and suppose that there exists a Young measure $\nu^0\in \mathcal{Y}(\Omega;\mathcal{M}(\RR))$ and functions $\uvec^0 \in L^{\frac{2\tgamma}{\tgamma-1}}(\Omega;\RR^3)$, $c^0 \in H^1(\Omega)$ such that
    \begin{align*}
        \nu_{\rho_n^0} \weakstar \nu^0 \quad \text{in } L^\infty_{\mathrm{w}^\ast}(\Omega;\mathcal{M}(\RR)), \quad
        \uvec_n^0 \to \uvec^0 \quad \text{in } L^{\frac{2\tgamma}{\tgamma-1}}(\Omega;\RR^3),\quad
        c^0_n \weak c^0 \quad \text{in } H^1(\Omega).
    \end{align*}
    For $n \in \NN$, let $(\rho_n,\uvec_n,c_n)$ be a finite energy weak solution to the initial-boundary value problem $\eqref{NSK_parabolic}$--$\eqref{NSK Parabolic Boundary Conditions}$ existing on $[0,T]$ and emanating from the initial data $(\rho^0_n,\uvec^0_n,c^0_n)$.\\
    Then there exist a Young measure $\nu \in \mathcal{Y}(\Omega_T;\mathcal{M}(\RR))$ and functions $\uvec \in L^2(0,T;H^{1}_0(\Omega;\RR^3))$, $c \in L^2(0,T;H^{2}(\Omega))\cap C([0,T];H^{1}(\Omega))$, $\rho \in C_{\mathrm{w}}([0,T];L^\tgamma(\Omega))$, $\oP\in L^\delta(\OmegaT)$ for some $\delta \in (1,\infty)$ with
    $
        \partial_t c \in L^2(\OmegaT),\rho\uvec \in C_{\mathrm{w}}([0,T];L^{\frac{2\tgamma}{\tgamma+1}}(\Omega))
    $
    and
    \begin{equation*}
        \rho(t,x)\geq 0, \quad \mathrm{spt}\, \nu_{(t,x)} \subseteq [0,\infty)
    \end{equation*}
    for almost all $(t,x) \in \Omega_T$, such that, after passing to a non-relabeled subsequence,
    \begin{align*}
        &\nu_{\rho_n} \weakstar \nu \quad \text{in } L^\infty_{\mathrm{w}^*}(\Omega_T; \mathcal{M}(\RR)), 
        \quad 
        \rho_n \to \rho \quad \text{in } C_{\mathrm{w}}([0,T];L^\tgamma(\Omega)), \\
        &\uvec_n \weak \uvec \quad \text{in } L^2(0,T;H^{1}_0(\Omega;\RR^3)),
        \quad 
        c_n \to c \quad \text{in } L^2(0,T;H^{1}(\Omega)),\\ 
        &\rho_n\uvec_n \to \rho \uvec \quad \text{in } C_{\mathrm{w}}([0,T];L^{\frac{2\tgamma}{\tgamma+1}}(\Omega;\RR^3)),
        \quad 
        \Peff(\rho_n) \weak \oP \quad \text{in } L^{\delta}(\OmegaT),\\
        &\rho_n\nabla c_n \weak \rho \nabla c \quad \text{in } L^2(0,T;L^\frac{6\tgamma}{\tgamma+6}(\Omega;\RR^3)), 
        \quad 
        \rho^0_n \weak \rho^0 \quad \text{in } L^\tgamma(\Omega).
    \end{align*}
    Moreover, we have for almost all $(t,x) \in \OmegaT$ that the functions $\rho$ and $\oP$ satisfy
    \begin{align*}
        \rho(t,x) = \int_{[0,\infty)} \xi \, \dd \nu_{(t,x)}(\xi), \quad \oP(t,x) = \int_{[0,\infty)} \Peff(\xi) \, \dd \nu_{(t,x)}(\xi)
    \end{align*}
    and for almost all $x \in \Omega$ we have
    \begin{align*}
        \mathrm{spt}\, \nu^0_x \subseteq [0,\infty),\quad \rho^0(x) = \int_{[0,\infty)} \xi\, \dd \nu^0(x).
    \end{align*}
    The triple $(\rho,\uvec,c)$ satisfies the integral identities
        \begin{equation}
        \begin{aligned}
            \int_0^\tau \int_\Omega 
            \bigl(
            \rho \partial_t\varphi + \rho \uvec \cdot \nabla \varphi 
            \bigr)
            \, \dd x \, \dd t 
            = 
            \int_\Omega \rho(\tau,\cdot)\varphi(\tau,\cdot)\, \dd x
            -\int_\Omega \rho^0 \varphi(0,\cdot) \, \dd x 
        \end{aligned}\label{thm: Main Result Continuity Eq}
        \end{equation}
        for any $\tau \in [0,T]$ and any test function $\varphi \in C^1_c([0,T]\times \RR^3)$,
    \begin{equation}
        \begin{aligned}
            &\int_0^\tau \int_\Omega
            \bigl(
            \rho \uvec \cdot \partial_t \Psivec 
            + \rho \uvec \otimes \uvec : \nabla \Psivec 
            + \oP \div \Psivec
            - \mu \nabla \uvec : \nabla \Psivec 
            - (\mu + \lambda ) \div \uvec \: \div \Psivec
            \bigr)
            \, \dd x \, \dd t\\
            &\qquad+ \int_0^\tau \int_\Omega  \coup \rho \nabla c \cdot \Psivec \, \dd x  \, \dd t 
            =
            \int_\Omega (\rho\uvec)(\tau,\cdot) \cdot \Psivec(\tau,\cdot) \, \dd x
            -\int_\Omega \rho^0 \uvec^0 \cdot \Psivec(0,\cdot) \,\dd x
        \end{aligned}\label{thm: Main Result Momentum Eq}
    \end{equation}
    for any $\tau \in [0,T]$ and any test function $\Psivec \in C^1_c([0,T]\times \Omega)$,
    \begin{equation}
        \begin{aligned}
            \int_0^\tau \int_\Omega 
            \bigl(
            \paracoup c \partial_t \varphi 
            - \capi \nabla c \cdot \nabla \varphi 
            - \coup (c-\rho) \varphi
            \bigr)
            \, \dd x \, \dd  t
            =
            \int_\Omega \paracoup c(\tau,\cdot) \varphi(\tau,\cdot) \, \dd x
            -\int_\Omega \paracoup c^0 \varphi(0,\cdot)\, \dd x
        \end{aligned}\label{thm: Main Result Parabolic Eq}
        \end{equation}
        for any $\tau \in [0,T]$ and any test function $\varphi \in C^1_c([0,T] \times \RR^3)$, and the Young measure $\nu$ satisfies the integral identity
    \begin{align}\label{thm: Main Result Kinetic Eq}
        \int_0^T \int_\Omega \Bigl\langle \nu, 
        \partial_t\psi 
        +  \nabla \psi \cdot \uvec
        - \bigl(\xi \partial_\xi\psi - \psi \bigr) \div\uvec
        +\bigl( \xi \partial_\xi \psi - \psi \bigr) \osc 
        \Bigr\rangle
        \, \dd x \, \dd t
        =- \int_\Omega \Bigl\langle \nu^0, \psi(0,\cdot,\cdot) \Bigr\rangle\, \dd x
    \end{align}
    for any test function $\psi \in C^1_c([0,T)\times\Omega\times\RR)$, where $\osc$ is defined for almost all $(t,x,\xi) \in \Omega_T \times \RR$ as
    \begin{align}\label{Defi Osc}
        \osc(t,x,\xi) :=\frac{\oP(t,x) - \Peff(\xi) }{\lambda + 2\mu}.
    \end{align}
\end{theorem}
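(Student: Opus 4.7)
The strategy adapts the Lions--Feireisl program for the compressible Navier--Stokes equations to the \system system, reinterpreted in the Young-measure framework. I would start by applying the \emph{a priori} computation from Section~\ref{Sec : Finite Energy Weak Solutions} to the sequence $(\rho_n,\uvec_n,c_n)$: the uniform initial bounds $\eqref{thm: Main Result Result 1}$ together with the energy inequality $\eqref{Def: Renormalized Finite Energy Weak Solutions Energy Eq}$ and $\eqref{bound pressure potential}$--$\eqref{bound for Peff}$ yield estimates uniform in $n$ of the same type as $\eqref{Introduction A Priori Bds I}$--$\eqref{Introduction A Priori Bds III}$. An additional uniform $L^\delta(\OmegaT)$-bound on $\Peff(\rho_n)$ for some $\delta>1$ is obtained by testing the momentum equation with a Bogovskii inverse-divergence field associated with a small power of $\rho_n$, in the spirit of Feireisl--Novotn\'y--Petzeltov\'a; the new coupling term $\coup\rho_n\nabla c_n$ is absorbed there as a forcing term thanks to the parabolic regularity bound in $\eqref{Introduction A Priori Bds III}$.

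Exploiting these uniform bounds and weak/weak-$*$ compactness, I would extract $\uvec_n\weak\uvec$ in $L^2(0,T;H^1_0)$, $\Peff(\rho_n)\weak\oP$ in $L^\delta(\OmegaT)$, and $\nu_{\rho_n}\weakstar\nu$ in $L^\infty_{\mathrm{w}^\ast}(\OmegaT;\mathcal{M}(\RR))$; the fundamental theorem on Young measures then identifies $\rho=\langle\nu,\xi\rangle$ and $\oP=\langle\nu,\Peff(\xi)\rangle$. Aubin--Lions applied to $\eqref{NSK_parabolic}_3$ gives strong convergence $c_n\to c$ in $L^2(0,T;H^1)$, while the compactness lemmas from the appendix applied to the continuity and momentum equations produce $\rho_n\to\rho$ in $C_{\mathrm{w}}([0,T];L^\tgamma)$ and $\rho_n\uvec_n\to\rho\uvec$ in $C_{\mathrm{w}}([0,T];L^{2\tgamma/(\tgamma+1)})$. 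Combined with $\uvec_n\weak\uvec$ and the Sobolev embedding $H^1_0\hookrightarrow L^6$, this handles the convective term, and strong $c_n\to c$ handles $\rho_n\nabla c_n$; inserting these convergences into $\eqref{Def: Renormalized Finite Energy Weak Solutions Continuity Eq}$, $\eqref{Def: Renormalized Finite Energy Weak Solutions Momentum Eq}$ and $\eqref{Def: Renormalized Finite Energy Weak Solutions Parabolic Eq}$ yields $\eqref{thm: Main Result Continuity Eq}$--$\eqref{thm: Main Result Parabolic Eq}$.

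The core of the proof is the kinetic identity $\eqref{thm: Main Result Kinetic Eq}$. For every $b\in\mathcal{F}$, the renormalized equation $\eqref{Def: Renormalized Finite Energy Weak Solutions Renormalized Continuity Eq}$ for $\rho_n$ passes to the limit in every term except $(b'(\rho_n)\rho_n-b(\rho_n))\div\uvec_n$. To identify its weak limit I would prove the weak continuity of the effective viscous flux by testing the momentum equation with the vector field $\nabla(-\Delta)^{-1}[\chi\, b(\rho_n)]$ for suitable cutoffs $\chi$ and performing a Feireisl-type commutator analysis; strong compactness of $c_n$ ensures that the $\coup\rho_n\nabla c_n$ contribution produces only strongly convergent remainders, yielding
\begin{equation*}
    \overline{\bigl(\Peff(\rho)-(\lambda+2\mu)\div\uvec\bigr)\,b(\rho)}
    = \bigl(\oP-(\lambda+2\mu)\div\uvec\bigr)\overline{b(\rho)},
\end{equation*}
where overlines denote weak $L^1$-limits. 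Using $\overline{b(\rho)}=\langle\nu,b\rangle$ and $\overline{\Peff(\rho)b(\rho)}=\langle\nu,\Peff\cdot b\rangle$, this rearranges into $\overline{b(\rho)\div\uvec}=\langle\nu,b\rangle\div\uvec-\langle\nu,\osc\cdot b\rangle$ with $\osc$ as in $\eqref{Defi Osc}$. Substituting back, the limit of the renormalized equation reads
\begin{equation*}
    \partial_t\langle\nu,b\rangle + \div\bigl(\langle\nu,b\rangle\uvec\bigr) + \langle\nu,\xi b'(\xi)-b(\xi)\rangle\div\uvec - \langle\nu,(\xi b'(\xi)-b(\xi))\osc\rangle = 0,
\end{equation*}
and testing this identity against tensor-product test functions $\psi(t,x,\xi)=\phi(t,x)b(\xi)$, followed by a density argument in $C^1_c([0,T)\times\Omega\times\RR)$, delivers $\eqref{thm: Main Result Kinetic Eq}$.

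The main obstacle is establishing the improved pressure bound and the weak continuity of the effective viscous flux in the presence of the coupling term $\coup\rho\nabla c$, which is absent in the classical Navier--Stokes analysis. This is precisely where the parabolic regularization provided by $\eqref{NSK_parabolic}_3$ becomes essential: it furnishes the strong $L^2(0,T;H^1)$-compactness of $c_n$ needed to split $\rho_n\nabla c_n$ into a weakly convergent factor and a strongly convergent one, thereby avoiding the obstruction that this term would otherwise produce in the commutator estimates.
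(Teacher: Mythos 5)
Your proposal is correct and follows essentially the same route as the paper: uniform energy and parabolic-regularity bounds, a Bogovskii-multiplier pressure estimate treating $\coup\rho_n\nabla c_n$ as a force, Young-measure and $C_{\mathrm{w}}$-compactness to pass to the limit in the three equations, and the effective-viscous-flux identity applied to the renormalized continuity equation to derive the kinetic equation via tensor-product test functions. The only (inessential) difference is that you propose to establish the effective-viscous-flux identity from scratch with the $\nabla(-\Delta)^{-1}$ multiplier and a commutator analysis, whereas the paper invokes a ready-made general version of that lemma (with forcing term) from Novotn\'y--Stra\v{s}kraba, feeding in $\mathbf{F}_n=\coup\rho_n\nabla c_n$.
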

The integral identities $\eqref{thm: Main Result Momentum Eq}$, $\eqref{thm: Main Result Parabolic Eq}$ and $\eqref{thm: Main Result Kinetic Eq}$ in Theorem~\ref{thm: Main Result} correspond exactly to the weak formulation of the effective system $\eqref{Introduction: Effective Equation}$ subject to the boundary conditions $\eqref{NSK Parabolic Boundary Conditions}$ and the initial conditions $\eqref{Introduction Effective Equations Initial Conditions}$.
In \cite{HI_P} a corresponding homogenization result was obtained for the compressible Navier--Stokes equations with an isentropic pressure law under the restriction $\gamma \in (\frac{3}{2},\infty)$.
Our homogenization result Theorem~\ref{thm: Main Result} applies for the whole regime $\gamma \in (1,\infty)$ due to the additional quadratic contributions in the energy functional (cf.\ $\eqref{Introduction Energy})$ generalizing in a certain sense the results from \cite{HI_P}.
\par
Let us clarify in which sense our homogenization result Theorem~\ref{thm: Main Result} for the \system equations with highly oscillating initial density justifies the system $\eqref{NSK Parabolic Boundary Conditions}$, $\eqref{Introduction: Effective Equation}$, $\eqref{Introduction Effective Equations Initial Conditions}$ as an effective system for a compressible liquid-vapor flow in the regime where the number of phase boundaries is very large.
Agreeing on the fact, that the compressible liquid-vapor flow is correctly described on the detailed scale by the NSK equations with a pressure function of Van-der-Waals type, we approximate first the NSK equations on this scale by the \system equations, where the approximation takes place in the regime $\coup \to \infty$ and $\paracoup\to 0$.
Then, we derive an effective system for the \system system in the following way.
We take a sequence of initial data ${(\rho^0_n,\uvec^0_n,c^0_n)}_{n\in\NN}$, where we expect the initial density sequence to be highly oscillating between the liquid's and the vapor's density mimicking the large number of phase boundaries initially.
Considering a sequence of finite energy weak solutions ${(\rho_n,\uvec_n,c_n)}_{n\in\NN}$ to the \system equations $\eqref{NSK_parabolic}$--$\eqref{NSK Parabolic Boundary Conditions}$ with initial conditions ${(\rho^0_n,\uvec^0_n,c^0_n)}_{n\in\NN}$, we then anticipate that the effective system is found in the limit $n \to \infty$.
Theorem~\ref{thm: Main Result} tells us then, that, up to a subsequence, it is possible to perform this limit procedure leading to the effective system $\eqref{NSK Parabolic Boundary Conditions}$, $\eqref{Introduction: Effective Equation}$, $\eqref{Introduction Effective Equations Initial Conditions}$.
As the \system equations approximate the NSK equations in the limit $\coup \to \infty$ and $\paracoup\to 0$, it would we very interesting to perform this limit in the effective system.
The resulting system of equations could then be viewed as a homogenized system for the NSK equations with highly oscillating initial density.
In this context a natural question would then be whether the homogenization limit commutes with the approximation limit $\coup\to\infty$ and $\paracoup\to 0$.
However, performing this limit in the effective system, even just by formal arguments, is rather delicate due to the non-linearity that is present in the quadratic correction term in the pressure function $\Peff$.
More specifically, this non-linearity causes the issue that we have in general
\begin{equation*}
    \langle \nu , \xi^2\rangle \neq \langle \nu,\xi\rangle ^2 = \rho^2,
\end{equation*}
but to perform the formal limit $\coup\to\infty$, $\paracoup\to 0$ an equality in this relation is needed.\\
In order to make the effective system accessible for standard approximation methods, we rewrite the kinetic equation on $\nu$ as a kinetic equation for the corresponding cumulative distribution function.
We obtain the following result as a consequence of our main result Theorem~\ref{thm: Main Result}.

\begin{corollary}\label{Cor : Main Result}
    Let the hypotheses and notations of Theorem~\ref{thm: Main Result} hold true.
    Define for almost all $(t,x) \in \Omega_T$ the cumulative distribution function $f(t,x,\cdot)$ corresponding to $\nu_{(t,x)}$ via
    \begin{equation*}
        f(t,x,\xi) := \nu_{(t,x)}((-\infty,\xi]) 
    \end{equation*}
    for any $\xi \in \RR$.
    Then, for almost all $(t,x)$, we have that $\xi \mapsto f(t,x,\xi)$ is non-decreasing, right-continuous and satisfies
    \begin{equation*}
        f(t,x,\xi) = 0 \quad \forall \, \xi \in (-\infty,0), \qquad 
        \lim\limits_{\xi\to\infty}f(t,x,\xi) = 1.
    \end{equation*}
    The analogous statements hold true for the cumulative distribution function $f^0$ that is defined for almost all $x \in \Omega$ via
    \begin{equation*}
        f^0(x,\xi) := \nu^0((-\infty,\xi]) 
    \end{equation*}
    for any $\xi \in \RR$.
    Moreover, we have that
    \begin{align}\label{Cor : Main Result : Eq on f}
        \int_0^T \int_\Omega \int_\RR 
        \Bigl(
        f \partial_t \psi 
        +
        f\uvec\cdot \nabla \psi
        -
        \Bigl( 
        \xi f \div\uvec- \xi \mathcal{M}[f]
        \Bigr)
        \partial_\xi \psi 
        \Bigr)
        \, \dd \xi\, \dd x \, \dd t
        =
        -
        \int_\Omega \int_\RR
        f^0 \psi(0,\cdot,\cdot) \, \dd \xi \, \dd x
    \end{align}    
    holds for any test function $\psi \in C^1_c([0,T) \times \Omega\times\RR)$ with
    \begin{equation*}
        \mathcal{M}[f](t,x,\xi) := \frac{1}{\lambda + 2 \mu}\int_{[0,\xi]} \osc(t,x,\eta) \, \dd f(t,x,\eta),
    \end{equation*}
    for almost all $(t,x,\xi)\in \Omega_T \times \RR$ and we have
    \begin{equation*}
        \rho(t,x) = \int_{[0,\infty)} \xi \, \dd f(t,x,\xi),
    \quad 
    \oP(t,x) = \int_{[0,\infty)}  \Peff(\xi) \, \dd f(t,x,\xi)
    \end{equation*}
    for almost all $(t,x) \in \Omega_T$, where the integral with respect to $\dd f(t,x,\cdot)$ denotes the Lebesgue--Stieltjes integral with respect to $f(t,x,\cdot)$.
\end{corollary}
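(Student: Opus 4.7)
The measure-theoretic conclusions are immediate. For a.e.\ $(t,x)$, the measure $\nu_{(t,x)}$ is a probability measure on $\RR$ supported in $[0,\infty)$ by Theorem~\ref{thm: Main Result}, so $f(t,x,\cdot)=\nu_{(t,x)}((-\infty,\cdot])$ is its ordinary cumulative distribution function: non-decreasing, right-continuous, zero on $(-\infty,0)$, tending to $1$ as $\xi\to\infty$. The same holds for $f^0$. The tautology $\int g\,\dd\nu_{(t,x)}=\int g\,\dd f(t,x,\cdot)$ (Radon integration equals Lebesgue--Stieltjes integration for $\nu$-integrable continuous $g$) then translates the closures for $\rho$ and $\oP$ from Theorem~\ref{thm: Main Result} into the corresponding ones expressed through $f$.

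The core step is to derive the integral identity~\eqref{Cor : Main Result : Eq on f} from the kinetic equation~\eqref{thm: Main Result Kinetic Eq}. Given $\psi\in C^1_c([0,T)\times\Omega\times\RR)$, I would test~\eqref{thm: Main Result Kinetic Eq} against the auxiliary function
\[
    \Psi(t,x,\xi):=-\int_\xi^\infty \psi(t,x,\eta)\,\dd\eta,
\]
which is $C^1$, compactly supported in $(t,x)$, satisfies $\partial_\xi\Psi=\psi$, and vanishes for $\xi$ above the $\xi$-support of $\psi$. Since $\mathrm{spt}\,\nu_{(t,x)}\subseteq[0,\infty)$, a smooth cutoff $\chi_R(\xi)\Psi$ with $\chi_R\equiv 1$ on $[-R,R]$ belongs to $C^1_c([0,T)\times\Omega\times\RR)$ and, for $R$ sufficiently large, coincides with $\Psi$ on $\mathrm{spt}\,\nu_{(t,x)}$; consequently~\eqref{thm: Main Result Kinetic Eq} yields exactly the identity formally obtained by inserting $\Psi$. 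Each term is then rewritten by Fubini and by Lebesgue--Stieltjes integration by parts: the first two terms directly give $-\int f\partial_t\psi\,\dd\eta$ and $-\int f\uvec\cdot\nabla\psi\,\dd\eta$; the $\Psi$-parts of the remaining two terms produce $f\div\uvec$ and $\mathcal{M}[f]$ after an $\eta$-integration, while the $\xi\psi$-parts are handled using the identities
\[
    \int_\RR\xi\psi\,\dd\nu=-\int_\RR(\psi+\xi\partial_\xi\psi)f\,\dd\xi, \qquad \int_\RR\xi\mathcal{M}[f]\,\partial_\xi\psi\,\dd\xi=-\int_\RR\mathcal{M}[f]\psi\,\dd\xi-\int_\RR\xi\psi\,\osc\,\dd\nu,
\]
where the second follows from the Stieltjes identity $\dd\mathcal{M}[f]=\osc\,\dd\nu$. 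Collecting all contributions, the auxiliary integrals $\int\mathcal{M}[f]\psi\,\dd\xi$ and $\int\xi\psi\,\osc\,\dd\nu$ cancel out, and one recovers exactly~\eqref{Cor : Main Result : Eq on f}. The initial-data side of~\eqref{thm: Main Result Kinetic Eq} is converted into the $f^0$-form by the same Fubini trick applied to $\nu^0$.

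The main obstacle is the bookkeeping of the Lebesgue--Stieltjes calculus in the possible presence of atoms of $\nu_{(t,x)}$: $\mathcal{M}[f]$ is only of bounded variation in $\xi$, so Dirac contributions of $\dd\mathcal{M}[f]$ must be consistently matched to jumps of the right-continuous representative of $f$ in each integration by parts. The integrability required to justify the Fubini swaps and the cutoff removal is provided by Theorem~\ref{thm: Main Result}: $\oP\in L^\delta(\OmegaT)$ together with the bounds~\eqref{bound for Peff} controls $\int\xi^{\tgamma}\,\dd\nu_{(t,x)}\in L^\delta(\OmegaT)$, dominating every integrand uniformly in the cutoff parameter $R$.
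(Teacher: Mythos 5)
Your proposal is correct and takes essentially the same route as the paper: both test the kinetic equation \eqref{thm: Main Result Kinetic Eq} with a $\xi$-antiderivative of $\psi$ (your $\Psi=-\int_\xi^\infty\psi\,\dd\eta$ is, up to sign, the paper's $B$), justify its admissibility by a cutoff using $\mathrm{spt}\,\nu_{(t,x)}\subseteq[0,\infty)$, and convert every term via the Lebesgue--Stieltjes integration-by-parts identities of Lemma~\ref{Lemma : Auxiliary for L-S Integral}. The only cosmetic difference is that the paper first reduces to tensor-product test functions $\psi=\varphi(t,x)b(\xi)$ by density, whereas you work with general $\psi$ directly; the cancellations you describe match the paper's computation.
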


\section{Uniform Estimates}\label{Sec : Uniform Estimates}
Let the hypotheses of Theorem~\ref{thm: Main Result} hold true and let ${(\rho_n,\uvec_n,c_n)}_{n\in\NN}$ denote a sequence of finite energy weak solutions with initial data ${(\rho_n^0,\uvec^0_n,c^0_n)}_{n\in\NN}$ to the initial-boundary value problem of the \system equations $\eqref{NSK_parabolic}$--$\eqref{NSK Parabolic Boundary Conditions}$.
The goal of this section is to provide some \emph{a priori} estimates for the sequence of solutions ${(\rho_n,\uvec_n,c_n)}_{n\in\NN}$ that are uniform in $n$.
These \emph{a priori} estimates are crucial in order to exploit compactness arguments that enable us to pass to the limit $n \to \infty$ and to extract a limit system stated in Theorem~\ref{thm: Main Result}.
We start with the \emph{a priori} estimates that are provided by the energy dissipation (see $\eqref{Def: Renormalized Finite Energy Weak Solutions Energy Eq}$ in Definition~\ref{Def:Renormalized Finite Energy Weak Solutions}).
In order to ease the notation, we make the following convention throughout this whole section.
For positive coefficients $\Pgrowth,\mu,\lambda,\capi,\coup,\paracoup$, some positive time $T>0$ and given initial data $\rho^0 \in L^\tgamma(\Omega)$, $\uvec^0 \in L^{\frac{2\tgamma}{\tgamma-1}}(\Omega;\RR^3)$, $c^0 \in H^{1}(\Omega)$, where $\tgamma:=\max\{2,\gamma\}$, we denote by $\curlyC_0>0$ a generic constant that may vary from line to line, but only depends on
\begin{align*}
    \|\rho^0\|_{L^\tgamma(\Omega)}, \|\uvec^0\|_{L^{\frac{2\tgamma}{\tgamma-1}}(\Omega)}, \|c^0\|_{H^{1}(\Omega)},|\Omega|,\Pgrowth, \mu,\lambda, \capi, \coup, \paracoup,  T.
\end{align*}
\begin{lemma}\label{prop: Energy Dissipation Bounds}
    Let $T,\mu,\lambda,\capi,\coup,\paracoup>0$ and let $\Omega \subseteq \RR^3$ be a bounded domain with regular boundary.
    Assume that $P$ is admissible with growth rate $\gamma \in (1,\infty)$ and let $\tgamma := \max\{2,\gamma\}$.
    Let initial data $(\rho^0,\uvec^0,c^0)$ be given satisfying the regularity $\eqref{defi:renormalized:IC}$ and let $(\rho,\uvec,c)$ denote a finite energy weak solution of $\eqref{NSK_parabolic}$--$\eqref{NSK Parabolic Boundary Conditions}$ existing on $[0,T]$ and emanating from $(\rho^0,\uvec^0,c^0)$.
    Then we have the estimate
    \begin{align}\label{prop: Energy Dissipation Bounds Result1}
        &\|\sqrt{\rho}\uvec\|_{L^\infty(0,T;L^2(\Omega))}
        + \|\rho\|_{L^\infty(0,T;L^\gamma(\Omega))}
        + \|\rho-c\|_{L^\infty(0,T;L^2(\Omega))}
        + \|\nabla c\|_{L^\infty(0,T;L^2(\Omega))}
        \nonumber
        \\
        &\quad
        +\|\uvec\|_{L^2(0,T;H^{1}(\Omega))}
        + \|\partial_t c\|_{L^2(\OmegaT)}
        \leq \curlyC_0.
    \end{align}
\end{lemma}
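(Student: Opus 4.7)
The plan is to extract all the estimates directly from the energy inequality \eqref{Def: Renormalized Finite Energy Weak Solutions Energy Eq} combined with the pressure potential bound \eqref{bound pressure potential}, exactly as in the formal calculation carried out in the introduction, but now making certain that the resulting constant genuinely depends only on the quantities listed in the definition of $\curlyC_0$ and not on the solution itself.

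First I would bound the initial energy $E_0$ defined in \eqref{Introduction Initial Energy} by $\curlyC_0$. The potential term is handled by the second inequality in \eqref{bound pressure potential}, which yields
\begin{equation*}
    \int_\Omega W(\rho^0)\,\dd x \leq c_3|\Omega| + c_4\|\rho^0\|_{L^\gamma(\Omega)}^\gamma \leq c_3|\Omega| + c_4|\Omega|^{1-\gamma/\tgamma}\|\rho^0\|_{L^\tgamma(\Omega)}^\gamma.
\end{equation*}
The kinetic term is bounded by Hölder's inequality with exponents $\tgamma$ and $\tgamma/(\tgamma-1)$, namely
\begin{equation*}
    \int_\Omega \rho^0|\uvec^0|^2\, \dd x \leq \|\rho^0\|_{L^\tgamma(\Omega)}\|\uvec^0\|_{L^{2\tgamma/(\tgamma-1)}(\Omega)}^2,
\end{equation*}
which is precisely why the assumption $\uvec^0\in L^{2\tgamma/(\tgamma-1)}(\Omega)$ appears in \eqref{defi:renormalized:IC}. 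The remaining contributions $\tfrac{\coup}{2}\|\rho^0-c^0\|_{L^2(\Omega)}^2$ and $\tfrac{\capi}{2}\|\nabla c^0\|_{L^2(\Omega)}^2$ are controlled directly by $\|\rho^0\|_{L^\tgamma(\Omega)}$, $\|c^0\|_{H^1(\Omega)}$ and $|\Omega|$ (using again that $\tgamma\geq 2$). Hence $E_0\leq \curlyC_0$.

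Next I would read off the individual bounds from \eqref{Def: Renormalized Finite Energy Weak Solutions Energy Eq}. The four pointwise-in-time quantities
\begin{equation*}
    \tfrac{1}{2}\|\sqrt{\rho(t)}\uvec(t)\|_{L^2(\Omega)}^2,\quad \int_\Omega W(\rho(t))\,\dd x,\quad \tfrac{\coup}{2}\|\rho(t)-c(t)\|_{L^2(\Omega)}^2,\quad \tfrac{\capi}{2}\|\nabla c(t)\|_{L^2(\Omega)}^2
\end{equation*}
are each nonnegative and sum to at most $E(t)\leq E_0$ for a.e.\ $t\in(0,T)$, so each is bounded by $\curlyC_0$. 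Inverting the first inequality in \eqref{bound pressure potential} then gives $\|\rho(t)\|_{L^\gamma(\Omega)}^\gamma\leq c_1|\Omega|+c_2\curlyC_0$, which yields the desired $L^\infty(0,T;L^\gamma(\Omega))$ control of $\rho$. The two dissipation terms similarly give $\mu\|\nabla\uvec\|_{L^2(\OmegaT)}^2+(\lambda+\mu)\|\div\uvec\|_{L^2(\OmegaT)}^2\leq \curlyC_0$ and $\paracoup\|\partial_tc\|_{L^2(\OmegaT)}^2\leq\curlyC_0$.

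To upgrade $\|\nabla\uvec\|_{L^2(\OmegaT)}$ to $\|\uvec\|_{L^2(0,T;H^1(\Omega))}$ I would invoke Poincaré's inequality on $H^1_0(\Omega;\RR^3)$, which is applicable because the definition of a finite energy weak solution requires $\uvec\in L^2(0,T;H^1_0(\Omega;\RR^3))$. This absorbs the $L^2$-norm of $\uvec$ into a constant multiple of $\|\nabla\uvec\|_{L^2(\OmegaT)}$, completing the estimate \eqref{prop: Energy Dissipation Bounds Result1}. There is no real obstacle here: every bound is a direct consequence of \eqref{Def: Renormalized Finite Energy Weak Solutions Energy Eq}, \eqref{bound pressure potential} and Poincaré's inequality; the only point requiring attention is the use of the correct Hölder exponent $2\tgamma/(\tgamma-1)$ so that the bound on $E_0$ fits into the convention defining $\curlyC_0$.
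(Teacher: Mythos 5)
Your argument is correct and is essentially identical to the paper's proof: both deduce $E_0\leq\curlyC_0$ from \eqref{bound pressure potential} and Hölder's inequality, then read off each bound from the nonnegativity of the individual terms in the energy inequality \eqref{Def: Renormalized Finite Energy Weak Solutions Energy Eq}, and finish with Poincaré's inequality on $H^1_0(\Omega;\RR^3)$. You merely spell out the Hölder exponents and the inversion of the lower bound in \eqref{bound pressure potential}, which the paper leaves implicit.
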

\begin{proof}
    Since $(\rho,\uvec,c)$ is a finite energy weak solution, we have for almost all $t \in (0,T)$ the estimate
    \begin{align}\label{prop: Energy Dissipation Bounds Eq1}
        E(t) + \int_0^t \int_\Omega \bigl( \mu |\nabla \uvec|^2 + (\lambda + \mu) |\div\uvec|^2 + \beta |\partial_t c|^2\bigr) \, \dd x\, \dd \tau 
        \leq E_0
        \leq \curlyC_0,
    \end{align}
    where we have used $\eqref{bound pressure potential}$ and Hölder's inequality to obtain the second estimate and
    where $E(t)$ and $E_0$ are defined as in $\eqref{Introduction Energy}$ and $\eqref{Introduction Initial Energy}$, respectively.
    From $\eqref{prop: Energy Dissipation Bounds Eq1}$, we deduce by using $\eqref{bound pressure potential}$ and Hölder's and Poincaré's inequality
    \begin{align*}
        &\|\sqrt{\rho}\uvec\|_{L^\infty(0,T;L^2(\Omega))}
        +
        \|\rho\|_{L^\infty(0,T;L^\gamma(\Omega))}
        +
        \|\rho-c\|_{L^\infty(0,T;L^2(\Omega))}
        +
        \|\nabla c\|_{L^\infty(0,T;L^2(\Omega))}
        \\
        &\quad+
        \|\uvec\|_{L^2(0,T;H^1(\Omega))}
        +
        \|\partial_t c\|_{L^2(\OmegaT)}
        \leq \curlyC_0.
    \end{align*}
\end{proof}
Now, we obtain some improved \emph{a priori} estimates for $\rho$ and $c$. 
First, we control the mean value of $c$ and thereby the $L^\infty(0,T;L^\tgamma(\Omega))$-norm of $\rho$ by exploiting the parabolic equation $\eqref{Def: Renormalized Finite Energy Weak Solutions Parabolic Eq}$.
Then, we conclude an improved \emph{a priori} estimate for $c$ by exploiting standard parabolic regularity estimates.
\begin{lemma}\label{lem: Parabolic Regularity}
    Let the hypotheses and notations of Lemma~\ref{prop: Energy Dissipation Bounds} hold true.
    Then we have for any $t \in [0,T]$ that
    \begin{align}\label{lem: Parabolic Regularity I}
        \biggl|\int_\Omega c(t) \, \dd x \biggr| \leq \curlyC_0.
    \end{align}
    In particular, we have
    \begin{align}
        &\|\rho\|_{L^\infty(0,T;L^\tgamma(\Omega))}
        +
        \|\Peff(\rho)\|_{L^\infty(0,T;L^1(\Omega))}
        +
        \|\rho\uvec\|_{L^\infty(0,T;L^{\frac{2\tgamma}{\tgamma+1}}(\Omega))}
        +
        \|\rho\uvec\|_{L^2(0,T;L^{\frac{6\tgamma}{\tgamma+6}}(\Omega))}
        \nonumber
        \\
        &\quad+
        \|\rho\uvec\otimes\uvec\|_{L^2(0,T;L^\frac{3\tgamma}{\tgamma+3}(\Omega))}
        +
        \|c\|_{L^\infty(0,T;H^1(\Omega))}
        +
        \|c\|_{L^2(0,T;H^2(\Omega))}
        +
        \|\rho\nabla c\|_{L^2(0,T;L^{\frac{6\tgamma}{\tgamma+6}}(\Omega))}\nonumber
        \\
        &\leq 
        \curlyC_0.\label{lem: Parabolic Regularity II}
    \end{align}
\end{lemma}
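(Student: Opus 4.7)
The plan is to first establish the uniform bound on the mean value of $c$ stated in \eqref{lem: Parabolic Regularity I}, from which, in combination with Lemma~\ref{prop: Energy Dissipation Bounds}, all remaining bounds in \eqref{lem: Parabolic Regularity II} will follow by routine arguments.

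To prove \eqref{lem: Parabolic Regularity I}, I would exploit the structure of the parabolic equation together with conservation of mass. Choose a spatial cutoff $\eta \in C^\infty_c(\RR^3)$ with $\eta \equiv 1$ in a neighborhood of $\overline{\Omega}$ and test \eqref{Def: Renormalized Finite Energy Weak Solutions Parabolic Eq} with $\varphi(t,x) := \phi(t) \eta(x)$ for an arbitrary $\phi \in C^1([0,T])$. Setting $m(t) := \int_\Omega c(t)\, \dd x$ and noting that $\nabla \eta \equiv 0$ on $\Omega$, the term involving $\nabla c \cdot \nabla \varphi$ drops out and the integral identity reduces to a weak first-order linear ODE of the form
\[
\paracoup m'(t) + \coup m(t) = \coup M_0, \quad m(0) = \int_\Omega c^0 \, \dd x,
\]
where $M_0 := \int_\Omega \rho^0 \, \dd x$; here I have used the weak continuity equation \eqref{Def: Renormalized Finite Energy Weak Solutions Continuity Eq} (tested with the same cutoff) to conclude $\int_\Omega \rho(t)\, \dd x = M_0$ for every $t \in [0,T]$. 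Solving this ODE explicitly gives $m(t) = m(0) e^{-\coup t/\paracoup} + M_0\bigl(1 - e^{-\coup t/\paracoup}\bigr)$, which, together with $|m(0)| \leq \curlyC_0$ and $|M_0| \leq \curlyC_0$ (both from the assumed initial regularity), yields \eqref{lem: Parabolic Regularity I}.

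For \eqref{lem: Parabolic Regularity II} I would proceed step by step. Combining \eqref{lem: Parabolic Regularity I} with $\|\nabla c\|_{L^\infty(0,T;L^2)} \leq \curlyC_0$ from Lemma~\ref{prop: Energy Dissipation Bounds} via Poincaré's inequality yields $\|c\|_{L^\infty(0,T;H^1)} \leq \curlyC_0$. Writing $\rho = (\rho - c) + c$ and invoking $\|\rho - c\|_{L^\infty(L^2)} \leq \curlyC_0$ from Lemma~\ref{prop: Energy Dissipation Bounds} gives $\rho \in L^\infty(0,T; L^2)$, which combined with $\rho \in L^\infty(0,T; L^\gamma)$ and the definition $\tgamma = \max\{2,\gamma\}$ yields $\rho \in L^\infty(0,T; L^\tgamma)$. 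The bound on $\Peff(\rho)$ then follows directly from \eqref{bound for Peff}. The product bounds for $\rho\uvec$, $\rho\uvec\otimes\uvec$ and $\rho\nabla c$ all follow by Hölder's inequality, using the Sobolev embedding $H^1_0(\Omega) \hookrightarrow L^6(\Omega)$ applied to $\uvec$ together with the weighted bound $\sqrt{\rho}\uvec \in L^\infty(L^2)$ and the factorization $\sqrt{\rho} \in L^\infty(L^{2\tgamma})$ inherited from $\rho \in L^\infty(L^\tgamma)$.

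Finally, to obtain $c \in L^2(0,T;H^2)$ I would rewrite the parabolic equation as $\paracoup\partial_t c - \capi\Delta c = \coup(\rho - c)$ with homogeneous Neumann boundary conditions and initial datum $c^0 \in H^1(\Omega)$. Because $\tgamma \geq 2$ and $\Omega$ is bounded, the right-hand side lies in $L^2(0,T; L^2(\Omega))$ with norm bounded by $\curlyC_0$, and standard linear parabolic regularity (as in \cite[Chapter 7]{EPDE}) gives $\|c\|_{L^2(0,T;H^2)} \leq \curlyC_0$. The bound on $\rho\nabla c$ then follows from $\rho \in L^\infty(L^\tgamma)$ combined with $\nabla c \in L^2(L^6)$, the latter being a consequence of $H^2 \hookrightarrow W^{1,6}$ in three dimensions. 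The principal subtlety is Step~1: one must carefully justify the use of a spatially-constant test function via the cutoff $\eta$ and exploit mass conservation to close the ODE for $m(t)$. Once this ODE is available, the rest of the lemma is routine bookkeeping with Hölder's inequality, Poincaré's inequality, and parabolic regularity.
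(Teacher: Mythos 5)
Your proof is correct and follows essentially the same route as the paper: test the parabolic equation with (a cutoff of) the constant function, use mass conservation from the continuity equation to close a scalar relation for $\int_\Omega c(t)\,\dd x$, and then deduce the remaining bounds from Lemma~\ref{prop: Energy Dissipation Bounds} via Poincar\'e's and H\"older's inequalities, the embedding $H^1(\Omega)\hookrightarrow L^6(\Omega)$, and linear parabolic regularity. The only cosmetic difference is that you solve the resulting linear ODE for the mean of $c$ explicitly, whereas the paper bounds the corresponding integral relation with Gronwall's inequality; both are equally valid.
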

\begin{proof}
    From $\eqref{Def: Renormalized Finite Energy Weak Solutions Parabolic Eq}$ with $\varphi \equiv 1$, we conclude for any $t \in [0,T]$ the relation
    \begin{align*}
        \int_\Omega c(t)\, \dd x
        =
        \int_\Omega c^0 \, \dd x
        +
        \frac{\coup}{\paracoup}\int_0^t\int_\Omega
        (\rho - c)\, \dd x \, \dd \tau
        =
        \int_\Omega
        c^0 
        \, \dd x
        +
        \frac{\coup}{\paracoup}\int_0^t\int_\Omega 
        (\rho^0-c)
        \, \dd x \, \dd \tau,
    \end{align*}
    where we have used the continuity equation in the last identity.
    This implies
    \begin{align*}
        \biggl|\int_\Omega c(t)\, \dd x\biggr|
        \leq 
        \|c^0\|_{L^1(\Omega)}
        +
        \frac{\coup T}{\paracoup}\|\rho^0\|_{L^1(\Omega)}
        +
        \frac{\coup}{\paracoup}
        \int_0^t\biggl|\int_\Omega c\, \dd x\biggr|\, \dd \tau.
    \end{align*}
    With Gronwall's inequality, we conclude $\eqref{lem: Parabolic Regularity I}$.
    Combining $\eqref{lem: Parabolic Regularity I}$, Poincaré's inequality and Lemma~\ref{prop: Energy Dissipation Bounds} yields
    \begin{align}\label{lem: Parabolic Regularity Pf I}
        \|c\|_{L^\infty(0,T;H^1(\Omega))}
        +
        \|\rho\|_{L^\infty(0,T;L^\tgamma(\Omega))}
        \leq \curlyC_0.
    \end{align}
    By $\eqref{bound for Peff}$, $\eqref{prop: Energy Dissipation Bounds Result1}$, Hölder's inequality and the Sobolev embedding $H^1(\Omega)\hookrightarrow L^6(\Omega)$ we conclude from $\eqref{lem: Parabolic Regularity Pf I}$ that
    \begin{align*}
        &\|\Peff(\rho)\|_{L^\infty(0,T;L^1(\Omega))}
        +
        \|\rho\uvec\|_{L^\infty(0,T;L^\frac{2\tgamma}{\tgamma+1}(\Omega))}
        +
        \|\rho\uvec\|_{L^2(0,T;L^{\frac{6\tgamma}{\tgamma+6}}(\Omega))}
        \\
        &\quad+
        \|\rho\uvec\otimes\uvec\|_{L^2(0,T;L^{\frac{3\tgamma}{\tgamma+3}}(\Omega))}
        \leq \curlyC_0.
    \end{align*}
    Since $c$ is the weak solution of the linear parabolic initial-boundary value problem
    \begin{alignat*}{2}
        \left\{
        \begin{aligned}
            \paracoup\partial_t c - \capi \Delta c + \coup c &= \coup \rho \quad &&\text{in } (0,T] \times \Omega,\\
            \nabla c \cdot \mathbf{n}_{\partial\Omega} &= 0 \quad &&\text{in } [0,T] \times \partial\Omega,\\
            c(\cdot,0) &= c^0 \quad &&\text{in } \Omega,
        \end{aligned}
        \right.
    \end{alignat*}
    standard parabolic regularity results (see e.g.~\cite[Chapter 7]{EPDE}) yield
    \begin{align*}
        \|c\|_{L^2(0,T;H^{2}(\Omega))}
        \leq
         \curlyC_0\Bigl( \|\rho\|_{L^2(0,T;L^2(\Omega))} + \|c^0\|_{H^{1}(\Omega)} \Bigr)
         \leq \curlyC_0,
    \end{align*}
    where we have used $\eqref{lem: Parabolic Regularity Pf I}$ to obtain the second inequality.
    From this inequality, we conclude by Hölder's inequality and the Sobolev embedding $H^1(\Omega)\hookrightarrow L^6(\Omega)$ that
    \begin{align*}
        \|\rho\nabla c\|_{L^2(0,T;L^\frac{6\tgamma}{\tgamma+6}(\Omega))}\leq \curlyC_0,
    \end{align*}
    which completes the proof of $\eqref{lem: Parabolic Regularity II}$.
\end{proof}
The improved \emph{a priori} estimate $\eqref{lem: Parabolic Regularity II}$ allows us to control the term $\Peff(\rho)$ by the initial data only in the space $L^\infty(0,T;L^1(\Omega))$.
This control is not appropriate for the limit procedure later on, as we can not guarantee that a sequence being uniformly bounded in this space is also equi-integrable (cf.\ Proposition~\ref{thm: Proof Main Result Part I}).
To overcome this, we derive some estimate that controls the term $\Peff(\rho)$ by the initial data in the space $L^\delta(\OmegaT)$ for some $\delta\in(1,\infty)$.
Note that a sequence being uniformly bounded in this space is in particular equi-integrable.
For the compressible Navier--Stokes equations with constant viscosity coefficients, such an estimate was derived by constructing an appropriate multiplier for the momentum equation with the Bogovskii operator (see \cite{FNP,FP}).
Let us recall at this point this operator including the properties that we will use in this work.

\begin{proposition}\label{Prop : Bogovskii Operator}
    Let $\Omega \subseteq \RR^3$ be a bounded Lipschitz domain and let $p,q,r \in (1,\infty)$.
    Then there exists a bounded linear operator
    \begin{equation*}
        \Bog \colon  \text{\L}^p(\Omega) := \biggl\{  \varphi \in L^p(\Omega) \mid \int_\Omega  \varphi \, \dd 
        x = 0\biggr\}
        \longrightarrow
        W^{1,p}_0(\Omega;\RR^3),
    \end{equation*}
    such that for any $f \in \text{\L}^p(\Omega)$, we have 
    \begin{equation}\label{Prop : Bogovskii Operator : Pointwise Relation}
        \div( \Bog [f]) = f \quad \text{a.e. in } \Omega  
    \end{equation}
    and
    \begin{equation}\label{Prop : Bogovskii Operator : W1p Estimate}
        \|\Bog[f]\|_{W^{1,p}_0(\Omega)} \leq c_1 \|f\|_{L^p(\Omega)}
    \end{equation}
    for some positive constant $c_1>0$ only depending on $p$ and $\Omega$.\\
    Furthermore, if $f \in \text{\L}^p(\Omega)$ satisfies
    \begin{equation*}
        f = \div \mathbf{g} \quad \text{in } \mathcal{D}^\prime(\Omega)
    \end{equation*}
    for some $\mathbf{g} \in L^r(\Omega;\RR^3)$ with $\mathbf{g}\cdot \mathbf{n}_{\partial\Omega} = 0$, then we have
    \begin{equation}\label{Prop : Bogovskii Operator : Estimate if Div}
        \|\Bog[f]\|_{L^r(\Omega)} \leq c_2 \|\mathbf{g}\|_{L^r(\Omega)}
    \end{equation}
    for some positive constant $c_2>0$ that only depends on $p,r$ and $\Omega$.\\
    In particular, for 
    \begin{equation*}
        q \in \begin{cases}
            [1,\frac{3p}{3-p}] &  \text{if } p \in [1,3),\\
            [1,\infty) & \text{if } p = 3,\\
            \{\infty\} & \text{if } p >3,
        \end{cases}
    \end{equation*}
    we have for any $f\in \text{\L}^p(\Omega)$ that
    \begin{equation*}
        \|\Bog[f]\|_{L^q(\Omega)} \leq c_3 \|f\|_{L^p(\Omega)}
    \end{equation*}
    for some positive constant $c_3>0$ that only depends on $p,q$ and $\Omega$.
\end{proposition}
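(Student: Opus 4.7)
The plan is to prove this by following the classical construction due to Bogovskii. I would first establish the existence and the $W^{1,p}_0$-bound for a domain that is star-shaped with respect to a ball $B \subset \Omega$, and then pass to a general bounded Lipschitz domain via a decomposition argument.

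For the star-shaped case, I would write down Bogovskii's explicit integral formula
\begin{equation*}
    \Bog[f](x) := \int_\Omega f(y)\, \mathbf{K}(x,y)\, \dd y,
\end{equation*}
where $\mathbf{K}(x,y)$ is built from a fixed non-negative cutoff $\omega \in C^\infty_c(B)$ with $\int \omega = 1$ and has the structure
\begin{equation*}
    \mathbf{K}(x,y) = \frac{x-y}{|x-y|^3}\int_{|x-y|}^{\infty} \omega\Bigl( y + r\tfrac{x-y}{|x-y|}\Bigr)\bigl( r + |x-y|\bigr)^{2}\, \dd r,
\end{equation*}
(up to the usual normalization). A direct differentiation in the distributional sense gives $\div \Bog[f] = f - \omega \int_\Omega f$, which under the mean-zero assumption reduces to \eqref{Prop : Bogovskii Operator : Pointwise Relation}. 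The bound \eqref{Prop : Bogovskii Operator : W1p Estimate} is then obtained by decomposing $\nabla \Bog[f]$ into a weakly singular piece (handled by Young's convolution inequality) and a Calderón--Zygmund type singular integral acting on $f$, whose $L^p$-boundedness gives the $W^{1,p}_0$ estimate. For a general bounded Lipschitz domain, I would cover $\Omega$ by finitely many star-shaped Lipschitz subdomains $\{\Omega_j\}_{j=1}^N$, split $f = \sum_j f_j$ with $\mathrm{spt}\, f_j \subset \Omega_j$ and $\int_{\Omega_j} f_j = 0$ via a standard partition-of-unity-plus-correction procedure, and extend each star-shaped solution by zero to obtain $\Bog[f]$.

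The main obstacle is the sharper bound \eqref{Prop : Bogovskii Operator : Estimate if Div}: it asserts that when the right-hand side is a divergence of an $L^r$-field with vanishing normal trace, $\Bog[f]$ itself is controlled in $L^r$ (not merely in $W^{-1,r}$ via the trivial embedding). I would attack this by integrating by parts in the explicit formula,
\begin{equation*}
    \Bog[\div\mathbf{g}](x) = -\int_\Omega \mathbf{g}(y)\cdot \nabla_y \mathbf{K}(x,y)\, \dd y + (\text{boundary contribution killed by } \mathbf{g}\cdot\mathbf{n}_{\partial\Omega} = 0),
\end{equation*}
and recognizing the resulting operator as a Calderón--Zygmund singular integral in the $y$-variable, which is $L^r$-bounded for $r \in (1,\infty)$. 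Equivalently, one may define $\Bog$ by duality on the negative Sobolev space $W^{-1,r}(\Omega)$ and verify that the pairing $\langle \Bog[\div \mathbf{g}],\varphi\rangle = -\langle \mathbf{g}, \nabla \Bog^\ast[\varphi]\rangle$ is controlled by $\|\mathbf{g}\|_{L^r}\|\varphi\|_{L^{r'}}$ using the $W^{1,r'}_0$-bound already established for the adjoint operator. The vanishing normal trace of $\mathbf{g}$ is what allows the boundary terms in this integration by parts to disappear.

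Finally, the last $L^q$ statement is an immediate consequence of combining \eqref{Prop : Bogovskii Operator : W1p Estimate} with the Sobolev embedding $W^{1,p}_0(\Omega)\hookrightarrow L^q(\Omega)$ for the three cases of $p$ relative to the dimension $3$, so no additional argument beyond the first part is required.
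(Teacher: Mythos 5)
Your proposal is correct and follows the classical Bogovskii construction; the paper offers no proof of its own here (it simply cites \cite[Proposition~2.1]{FP}), and your sketch reproduces precisely the argument underlying that reference: the explicit kernel on star-shaped domains, Calder\'on--Zygmund theory for the $W^{1,p}_0$-bound, a decomposition into star-shaped subdomains for general Lipschitz domains, integration by parts in the kernel (using $\mathbf{g}\cdot\mathbf{n}_{\partial\Omega}=0$ to kill boundary terms) for the negative-norm estimate $\eqref{Prop : Bogovskii Operator : Estimate if Div}$, and Sobolev embedding for the final $L^q$ statement. The only caveat is that your ``equivalently, by duality'' variant for $\eqref{Prop : Bogovskii Operator : Estimate if Div}$ is the weaker of your two routes, since the adjoint of $\Bog$ is not $\Bog$ itself and its $W^{1,r'}$-boundedness would need separate verification, but your primary kernel-based route is the standard and sound one.
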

\begin{proof}
    See \cite[Proposition~2.1]{FP}.
\end{proof}
In contrast to the compressible Navier--Stokes equation, we have an additional term $\coup\rho\nabla c$ present in the momentum equation, such that it is not clear whether the approach in \cite{FNP,FP} transfers to the \system equations.
However, by Lemma~\ref{lem: Parabolic Regularity}, we can control this additional term by the initial data in an appropriate way, such that the approach in \cite{FNP,FP} is applicable for the \system equations.
We obtain the following result for which we provide a complete proof in order to show that the approach of \cite{FNP,FP} applies for the \system equations.
\begin{lemma}\label{lem: Improved Space Time Pressure}
    Let the hypotheses and notations of Lemma~\ref{prop: Energy Dissipation Bounds} hold true.
    Then we have
    \begin{align}\label{lem: Improvec Space Time Pressure I}
        \int_0^T\int_\Omega \Peff(\rho) \rho^\theta\, \dd x \, \dd t
        \leq \curlyC_0, 
    \end{align}
    and in particular
    \begin{align}\label{lem: Improved Space Time Pressure II}
        \|\rho\|_{L^{\tgamma+\theta}(\OmegaT)}
        +
        \|\Peff(\rho)\|_{L^\delta(\OmegaT)}
        \leq \curlyC_0,
    \end{align}
    where 
    \begin{align*}
        \theta:= \min\biggl\{ \frac{2\tgamma-3}{3},1\biggr\}, \quad \delta:= \frac{\tgamma + \theta}{\tgamma}.
    \end{align*}
\end{lemma}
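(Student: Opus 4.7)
The plan is to adapt the classical Bogovskii-operator test function argument from the theory of the compressible Navier--Stokes equations (as in \cite{FNP,FP}) to the \system momentum equation, treating the extra coupling term $\coup \rho \nabla c$ as a forcing with the help of the parabolic regularity already established in Lemma~\ref{lem: Parabolic Regularity}. Concretely, I would first fix a truncated renormalization $b_k \in \mathcal{F}$ approximating $z \mapsto z^\theta$ from below, and use as a test function in the weak momentum equation $\eqref{Def: Renormalized Finite Energy Weak Solutions Momentum Eq}$ the vector field
\begin{equation*}
    \Psivec_k(t,x) := \psi(t)\, \Bog\Bigl[ b_k(\rho)(t,x) - \fint_\Omega b_k(\rho)(t,y)\,\dd y\Bigr],
\end{equation*}
where $\psi \in C^1_c([0,T))$ is a smooth cutoff. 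The pressure contribution is $\int_0^T \psi \int_\Omega \Peff(\rho)\,b_k(\rho)\,\dd x\,\dd t$ up to a harmless mean-value remainder that is controlled by $\|\Peff(\rho)\|_{L^\infty(0,T;L^1(\Omega))}$ from Lemma~\ref{lem: Parabolic Regularity} and $\|b_k(\rho)\|_{L^\infty(0,T;L^\infty(\Omega))}$ before letting $k \to \infty$. This pressure term is precisely what produces the left-hand side of $\eqref{lem: Improvec Space Time Pressure I}$ in the limit $k \to \infty$ via monotone convergence, so it suffices to bound all remaining terms uniformly in $k$ by $\curlyC_0$.

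The remaining terms are handled as follows. The time-derivative contribution $\int \rho \uvec \cdot \partial_t \Psivec_k$ splits into a piece involving $\psi'$, which is controlled directly by $\|\rho\uvec\|_{L^\infty(0,T;L^{2\tgamma/(\tgamma+1)})}$ and the $W^{1,\tgamma/\theta}_0$-continuity of $\Bog$; and a piece involving $\Bog[\partial_t b_k(\rho)]$, which I would rewrite using the renormalized continuity equation $\eqref{Def: Renormalized Finite Energy Weak Solutions Renormalized Continuity Eq}$ as $\partial_t b_k(\rho) = -\div(b_k(\rho)\uvec) - (b_k'(\rho)\rho - b_k(\rho))\div\uvec$. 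The divergence part is then absorbed by the estimate $\eqref{Prop : Bogovskii Operator : Estimate if Div}$, producing an $L^r$-bound of $\Bog[\partial_t b_k(\rho)]$ in terms of $\|b_k(\rho)\uvec\|_{L^r}$, which is controllable by Hölder together with $\uvec \in L^2 H^1_0 \hookrightarrow L^2 L^6$ and $b_k(\rho) \lesssim \rho^\theta \in L^\infty L^{\tgamma/\theta}$. The convective term $\int (\rho \uvec \otimes \uvec) : \nabla \Psivec_k$ is bounded via $\rho\uvec\otimes\uvec \in L^2(0,T;L^{3\tgamma/(\tgamma+3)})$ from Lemma~\ref{lem: Parabolic Regularity} and $\nabla \Psivec_k \in L^\infty(0,T;L^{\tgamma/\theta})$ via $\eqref{Prop : Bogovskii Operator : W1p Estimate}$; the Hölder exponents close precisely when $\theta \leq (2\tgamma - 3)/3$, which is the first restriction in the definition of $\theta$. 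The viscous terms are immediate from Lemma~\ref{prop: Energy Dissipation Bounds} and $\eqref{Prop : Bogovskii Operator : W1p Estimate}$.

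The new term compared to the Navier--Stokes case is the coupling $\int_0^T \psi \int_\Omega \coup \rho \nabla c \cdot \Psivec_k\,\dd x\,\dd t$. For this I would use the uniform bound $\|\rho\nabla c\|_{L^2(0,T;L^{6\tgamma/(\tgamma+6)}(\Omega))} \leq \curlyC_0$ from Lemma~\ref{lem: Parabolic Regularity}, pair it against $\Psivec_k$ whose $L^q$-norm is controlled through the Sobolev embedding of $W^{1,\tgamma/\theta}_0$ (distinguishing the sub- and super-critical regimes in the usual way), and verify that the Hölder exponents balance under the constraint $\theta \leq 1$, which is the second restriction on $\theta$. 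The restriction $\theta \leq 1$ is also convenient because it keeps $b_k'(z)z - b_k(z)$ bounded by $\rho^\theta$ uniformly in $k$, so that the renormalization remainder from the time-derivative step is controlled as well.

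The main obstacle I anticipate is bookkeeping the exponents so that the choice $\theta = \min\{(2\tgamma-3)/3,\,1\}$ is the largest value for which every term closes; in particular one must be attentive when $\tgamma$ is close to $3/2$, where $(2\tgamma-3)/3$ becomes small and the convective term is the binding constraint, versus when $\tgamma$ is large, where $\theta = 1$ is saturated. Once $\eqref{lem: Improvec Space Time Pressure I}$ is established, the bound $\eqref{lem: Improved Space Time Pressure II}$ follows by the growth estimate $\eqref{bound for Peff}$: indeed $\rho^{\tgamma+\theta} \leq c_7 \rho^\theta + c_8 \Peff(\rho) \rho^\theta$, which yields $\rho \in L^{\tgamma + \theta}(\OmegaT)$; and then $\Peff(\rho) \leq c_5 + c_6 \rho^{\tgamma}$ together with $\rho \in L^{\tgamma+\theta}$ gives $\Peff(\rho) \in L^{(\tgamma+\theta)/\tgamma}(\OmegaT) = L^\delta(\OmegaT)$, concluding the proof.
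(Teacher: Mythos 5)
Your proposal follows essentially the same route as the paper: a Bogovskii multiplier built from a truncated renormalization $b_k\in\mathcal{F}$ approximating $z\mapsto z^\theta$, the renormalized continuity equation to handle the time derivative of the multiplier, the convective term as the source of the constraint $\theta\le\frac{2\tgamma-3}{3}$, the coupling term $\coup\rho\nabla c$ absorbed via the bound $\|\rho\nabla c\|_{L^2(0,T;L^{6\tgamma/(\tgamma+6)})}\le\curlyC_0$ from Lemma~\ref{lem: Parabolic Regularity}, and the passage from $\eqref{lem: Improvec Space Time Pressure I}$ to $\eqref{lem: Improved Space Time Pressure II}$ through $\eqref{bound for Peff}$. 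The overall architecture and all the decisive estimates coincide with the paper's proof.

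The one place where your sketch glosses over a step the paper spends real effort on is the justification that $\Psivec_k=\psi(t)\,\Bog[b_k(\rho)-\fint_\Omega b_k(\rho)]$ is an admissible test function and that $\partial_t\Psivec_k$ can be computed by formally commuting $\Bog$ with $\partial_t$ and inserting the renormalized continuity equation. Since $b_k(\rho)$ is merely $L^\infty$ and the renormalized equation holds only distributionally, the paper first replaces $b_k(\rho)$ by its spatial mollification $S_\delta[b_k(\rho)]$, for which the regularized renormalized equation holds pointwise a.e.\ with a commutator remainder $r_\delta\to0$ in $L^2(\OmegaT)$ (Friedrichs' lemma); this yields $\partial_t\Psivec_\delta\in L^2(0,T;W^{1,2}_0)$, makes the density argument for the test function legitimate, and—importantly—guarantees that $S_\delta[b_k(\rho)]\uvec$ has vanishing normal trace so that the estimate $\eqref{Prop : Bogovskii Operator : Estimate if Div}$ applies to $\Bog[\div(S_\delta[b_k(\rho)]\uvec)]$. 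Without some regularization of this kind your treatment of the $\Bog[\partial_t b_k(\rho)]$ piece is formal. Two minor bookkeeping remarks: the constraint $\theta\le1$ is not imposed by the coupling term (whose exponents close for somewhat larger $\theta$) but by the terms requiring $\rho^\theta\in L^\infty(0,T;L^2(\Omega))$, i.e.\ $2\theta\le\tgamma$ with $\tgamma$ possibly equal to $2$; and the uniform bound $|b_k(z)|+|b_k'(z)z|\le\curlyC_0 z^\theta$ is built into the choice of $b_k$ rather than a consequence of $\theta\le1$. Neither affects the final value of $\theta$ or the validity of the conclusion.
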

\begin{proof}
    Let $\phi \in C^\infty_c(B_1(0))$ denote a standard mollifier with
    \begin{align*}
        \int_{\RR^3} \phi(x) \, \dd x = 1, \quad 
        0 \leq \phi(x)\leq 1,\quad
        \phi(-x) = \phi(x) 
        \quad \forall \, x \in \RR^3.
    \end{align*}
    For $\delta>0$ and $x \in \RR^3$, we denote
    \begin{align*}
        \phi_\delta(x) := \frac{1}{\delta^3} \phi\Bigl(\frac{x}{\delta}\Bigr).
    \end{align*}
    We extend $\rho$ and $\uvec$ by zero onto $(0,T) \times \RR^3$. These extensions satisfy $\rho \in L^\infty(0,T;L^\tgamma(\RR^3))$ and $\uvec \in L^2(0,T;H^{1}_0(\RR^3;\RR^3))$.
    For some time dependent integrable function $f \in L^1((0,T)\times\RR^3)$ and $\delta>0$, we denote the spatial mollification of $f$ for almost all $t \in (0,T)$ via
    \begin{align*}
       S_\delta\bigl[f](t,x):= \int_{B_\delta(x)} \phi_\delta(x-y) f(t,y) \, \dd y \quad \forall \, x \in \Omega.
    \end{align*}
    Let us fix some function $b \in \mathcal{F}$ with the additional property that 
    \begin{align}\label{lem: Improved Space Time Pressure : Bound on b}
        |b(z)| + |b^\prime(z) z| \leq \curlyC_0 z^\theta \quad \forall \, z \in [0,\infty).
    \end{align}
    Note that the generic positive constant $\curlyC_0>0$ does not depend on $\delta>0$ by our convention.
    From the fact that $\rho$ satisfies the continuity equation in the renormalized sense in $\mathcal{D}^\prime ((0,T) \times \RR^3)$, we conclude that $S_\delta\bigl[ b(\rho) \bigr]$ satisfies its regularized version
    \begin{align*}
        \partial_tS_\delta\bigl[ b(\rho) \bigr] 
        =
        - \div \Bigl(  S_\delta\bigl[ b(\rho) \uvec \bigr]  \Bigr)
        - S_\delta \Bigl[ \bigl( b^\prime(\rho)\rho - b(\rho)\bigr) \div \uvec   \Bigr]
    \end{align*}
    almost everywhere on $(0,T) \times \RR^3$.
    In particular, we have that $\partial_t S_\delta\bigl[ b(\rho) \bigr] \in L^2(\OmegaT)$, at least.
    By introducing 
    \begin{align*}
        r_\delta:= \div\Bigl( S_\delta\bigl[ b(\rho) \bigr] \uvec \Bigr) - \div\Bigl( S_\delta\bigl[ b(\rho) \uvec \bigr] \Bigr)
    \end{align*}
    we can rewrite this relation as
    \begin{align}\label{lem: Improved Space Time Pressure : Mollified Renormalized Equation}
        \partial_tS_\delta\bigl[ b(\rho) \bigr] 
        =
        - \div \Bigl(  S_\delta\bigl[ b(\rho) \bigr] \uvec   \Bigr)
        - S_\delta \Bigl[ \bigl( b^\prime(\rho)\rho - b(\rho)\bigr) \div \uvec   \Bigr]
        +
        r_\delta.
    \end{align}
    Now, we have that $S_\delta\bigl[ b(\rho) \bigr]\uvec \cdot \mathbf{n}_{\partial\Omega} = 0$ in the sense of traces, which will be crucial in order to use the estimate $\eqref{Prop : Bogovskii Operator : Estimate if Div}$ from Proposition~\ref{Prop : Bogovskii Operator} later on.
    By Friedrichs' commutator lemma (see e.g.~\cite[Corollary 11.3]{FN}), we have for the additional term the strong convergence
    \begin{align}\label{lem: Improved Space Time Pressure : Convergence of r_delta}
        r_\delta \to 0 \quad \text{in } L^2(\OmegaT)
    \end{align}
    for $\delta \to 0$.\\
    We fix $\eta \in \mathcal{D}((0,T))$ with $0 \leq \eta \leq 1$ and introduce for $\delta>0$ a multiplier $\Psivec_\delta$ defined for almost all $t \in (0,T)$ via
    \begin{align*}
        \Psivec_\delta (t) := \eta(t) \Bog \Bigl( S_\delta \bigl[ b(\rho(t)) \bigr] - \fint_\Omega S_\delta\bigl[ b(\rho(t)) \bigr] \Bigr).
    \end{align*}
    As $S_\delta\bigl[ b(\rho) \bigr] \in L^\infty(0,T;L^\infty(\Omega))$, we infer by estimate $\eqref{Prop : Bogovskii Operator : W1p Estimate}$ from Proposition~\ref{Prop : Bogovskii Operator} that
    \begin{align}\label{lem: Improved Space Time Pressure : Regularity Psi}
        \Psivec_\delta \in L^\infty(0,T;W^{1,p}_0(\Omega;\RR^3))
    \end{align}
    for any fixed $\delta>0$ and for any $p \in (1,\infty)$.
    With relation $\eqref{Prop : Bogovskii Operator : Pointwise Relation}$ from Proposition~\ref{Prop : Bogovskii Operator} we obtain that
    \begin{align}\label{lem: Improved Space Time Pressure : Regularity div Psi}
        \div \Psivec_\delta = \eta S_\delta\bigl[ b(\rho) \bigr] - \eta\fint_\Omega S_\delta\bigl[ b(\rho) \bigr] \in L^\infty(0,T;L^\infty(\Omega)).
    \end{align}
    Using equation $\eqref{lem: Improved Space Time Pressure : Mollified Renormalized Equation}$, we infer that
    \begin{align*}
        \partial_t \Psivec_\delta 
        &=
        - \Bog\biggl( \div\Bigl( S_\delta\bigl[ b(\rho) \bigr] \uvec \Bigr) \biggr)
        - \Bog\biggl( S_\delta\Bigl[ \bigl(b^\prime(\rho)\rho - b(\rho) \bigr) \div\uvec\Bigr] - \fint_\Omega S_\delta\Bigl[ \bigl( b^\prime(\rho)\rho - b(\rho)\bigr) \div\uvec \Bigr]\biggr)\\
        &\qquad
        +
        \Bog\biggl( r_\delta - \fint_\Omega r_\delta  \biggr)
        +
        \partial_t \eta \, \Bog\Bigl( S_\delta[b(\rho)] - \fint_\Omega S_\delta [b(\rho)]\Bigr).
    \end{align*}
    In particular, we have
    \begin{align}\label{lem: Improved Space Time Pressure : Regularity partial_t Psi}
        \partial_t \Psivec_\delta \in L^2(0,T;W^{1,2}_0(\Omega;\RR^3)).
    \end{align}
    Using the fact that $\Psivec_\delta$ satisfies the regularity $\eqref{lem: Improved Space Time Pressure : Regularity Psi}, \eqref{lem: Improved Space Time Pressure : Regularity div Psi}, \eqref{lem: Improved Space Time Pressure : Regularity partial_t Psi}$ in combination with the fact that $\eta$ has compact support in $(0,T)$, we conclude by a density argument that $\Psivec$ is an admissible test function for the momentum equation $\eqref{Def: Renormalized Finite Energy Weak Solutions Momentum Eq}$.
    By using $\Psivec_\delta$ as a test function and exploiting the relation $\eqref{Prop : Bogovskii Operator : Pointwise Relation}$ from Proposition~\ref{Prop : Bogovskii Operator}, we obtain for any fixed $\delta>0$ the relation
    \begin{align}\label{lem: Improved Space Time Pressure : Momentum Equation Tested}
        \int_0^T \int_\Omega \eta \,\Peff(\rho) S_\delta\bigl[b(\rho)\bigr] \, \dd x \, \dd t = \sum\limits_{i=1}^9 I_i^\delta
    \end{align}
    with
    \begingroup
    \allowdisplaybreaks[4]
    \begin{align*}
        I_1^\delta  &:= \int_0^T \int_\Omega \eta \, \Peff(\rho) \fint_\Omega S_\delta\bigl[ b(\rho) \bigr] \, \dd x \, \dd t,\\
        I_2^\delta &:= \int_0^T \int_\Omega \eta \,\rho\uvec \cdot \Bog\biggl(  \div\Bigl( S_\delta\bigl[b(\rho)\bigr] \uvec \Bigr)\biggr)\, \dd x \, \dd t, \\
        I_3^\delta &:= \int_0^T \int_\Omega \eta \, \rho \uvec \cdot \Biggl(  \Bog\biggl( S_\delta\Bigl[ (b^\prime(\rho)\rho - b(\rho)) \div\uvec \Bigr]   - \fint_\Omega S_{\delta}\Bigl[ \bigl(b^\prime(\rho)\rho - b(\rho)\bigr) \div\uvec \Bigr] \biggr)  \Biggr) \, \dd x\, \dd t,\\
        I_4^\delta &:= -\int_0^T \int_\Omega \eta \, \rho \uvec \cdot \Bog\biggl( r_\delta - \fint_\Omega r_\delta \biggr)\, \dd x \, \dd t,\\
        I_5^\delta &:= \mu \int_0^T \int_\Omega \eta \,\nabla \uvec : \nabla \Bog\biggl( S_\delta\bigl[b(\rho)\bigr] - \fint_\Omega S_\delta\bigl[b(\rho)\bigr] \biggr)\, \dd x \, \dd t,\\
        I_6^\delta &:= - \int_0^T \int_\Omega \eta \,\rho\uvec\otimes\uvec : \nabla \Bog\biggl( S_\delta\bigl[b(\rho)\bigr] - \fint_\Omega S_\delta\bigl[b(\rho)\bigr] \biggr) \, \dd x \, \dd t,\\ 
        I_7^\delta &:= (\lambda + \mu ) \int_0^T \int_\Omega \eta \, \div\uvec \Bigl( S_\delta\bigl[b(\rho)\bigr] - \fint_\Omega S_\delta\bigl[b(\rho)\bigr]\Bigr) \, \dd x \, \dd t,\\
        I_8^\delta &:= - \coup \int_0^T \int_\Omega \eta \, \rho \nabla c \cdot \Bog\biggl( S_\delta\bigl[b(\rho)\bigr] - \fint_\Omega S_\delta\bigl[b(\rho)\bigr] \biggr)\, \dd x \, \dd t,\\
        I_9^\delta &:= - \int_0^T \int_\Omega \partial_t \eta \, \rho \uvec \cdot \Bog\biggl( S_\delta\bigl[b(\rho)\bigr] - \fint_\Omega S_\delta\bigl[b(\rho)\bigr] \biggr)\, \dd x \, \dd t.
    \end{align*}
    \endgroup
    Our goal is to estimate the right hand side of equation $\eqref{lem: Improved Space Time Pressure : Momentum Equation Tested}$ independent of $\delta$.
    In order to do so, we will use the estimate $\eqref{lem: Improved Space Time Pressure : Bound on b}$.
    More specifically, by applying standard results on the convolution of a function (see e.g.~\cite{EPDE}), we deduce from $\eqref{lem: Improved Space Time Pressure : Bound on b}$ that
    \begin{align}
        &\|S_\delta[b(\rho)]\|_{L^p(0,T;L^q(\Omega))}
        +
        \|S_\delta[b^\prime(\rho)\rho]\|_{L^p(0,T;L^q(\Omega))}\nonumber
        \\
        &\leq \curlyC_0\Bigl( \|b(\rho)\|_{L^p(0,T;L^q(\Omega))} + \|b^\prime(\rho)\rho\|_{L^p(0,T;L^q(\Omega))} \Bigr)\nonumber
        \\
        &\leq \curlyC_0 \|\rho^\theta\|_{L^p(0,T;L^q(\Omega))}
        \qquad \forall\, p,q\in[1,\infty].\label{mollification estimate}
    \end{align}
    With Lemma~\ref{lem: Parabolic Regularity}, Proposition~\ref{Prop : Bogovskii Operator} and $\eqref{mollification estimate}$, we estimate the terms $I_i^\delta$ for $i \in \{1,2,\cdots,9\}$ as follows.\\
    For $I_1^\delta$, we have
    \begin{align*}
        |I_1^\delta| 
        &\leq 
        \|\eta\|_{L^\infty(0,T)}
        \|\fint_\Omega S_\delta\bigl[ b(\rho) \bigr] \|_{L^\infty((0,T))} 
        \|\Peff(\rho)\|_{L^1(\OmegaT)}
        \\
        &\leq
        \curlyC_0
        \|\eta\|_{L^\infty(0,T)}
        \|\rho^\theta\|_{L^\infty(0,T;L^1(\Omega))} \|\Peff(\rho)\|_{L^1(\OmegaT)}
        \leq 
        \curlyC_0\|\eta\|_{L^\infty(0,T)},
    \end{align*}
    since $\theta\leq \tgamma$.\\
    For $I_\delta^2$, we set $r_1:=\frac{6\tgamma}{5\tgamma-6}$ and $r_2:=\frac{3\tgamma}{2\tgamma-3}$ and estimate
    \begin{align*}
        |I_2^\delta|
        &\leq 
        \|\eta\|_{L^\infty(0,T)}
        \|\rho \uvec\|_{L^2(0,T;L^{\frac{6\tgamma}{\tgamma+6}}(\Omega))}  \Bigl\|\Bog\Bigl(\div\bigl(S_\delta[b(\rho)] \uvec\bigr) \Bigr) \Bigr\|_{L^2(0,T;L^{r_1}(\Omega))}
        \\
        &\leq 
        \curlyC_0 \|\eta\|_{L^\infty(0,T)} \|S_\delta[b(\rho)]\uvec\|_{L^2(0,T;L^{r_1}(\Omega))}
        \\
        &\leq 
        \curlyC_0 \|\eta\|_{L^\infty(0,T)}  \|S_\delta[b(\rho)]\|_{L^\infty(0,T;L^{r_2}(\Omega))}\|\uvec\|_{L^2(0,T;L^6(\Omega))}
        \\
        &\leq 
        \curlyC_0 \|\eta\|_{L^\infty(0,T)} \|\rho^\theta \|_{L^\infty(0,T;L^{r_2}(\Omega))}
        \leq \curlyC_0 \|\eta\|_{L^\infty(0,T)},
    \end{align*}
    since $\theta r_2 \leq \tgamma$.\\
    For $I_\delta^3$, we set $r_3:=\max\Bigl\{\frac{6\tgamma}{7\tgamma-6},1\Bigr\}$ and estimate
    \begin{align*}
        |I_3^\delta|
        &\leq 
        \curlyC_0 \|\eta\|_{L^\infty(0,T)} \Bigl\| \Bog \Bigl( S_\delta\bigl[(b^\prime(\rho)\rho - b(\rho))\div\uvec\bigr] - \fint_\Omega S_\delta \bigl[ (b^\prime(\rho)\rho - b(\rho))\div\uvec \bigr]\Bigr) \Bigr\|_{L^2(0,T;L^{r_1}(\Omega))}
        \\
        &\leq 
        \curlyC_0 \|\eta\|_{L^\infty(0,T)}\Bigl\|S_\delta \bigl[(b^\prime(\rho)\rho-b(\rho))\div\uvec \bigr]\Bigr\|_{L^2(0,T;L^{r_3}(\Omega))}
        \\
        &\leq 
        \curlyC_0 \|\eta\|_{L^\infty(0,T)} \|(b^\prime(\rho)\rho -b(\rho))\div\uvec\|_{L^2(0,T;L^{r_3}(\Omega))}
        \\
        &\leq 
        \curlyC_0 \|\eta\|_{L^\infty(0,T)}\|b^\prime(\rho)\rho -b(\rho)\|_{L^\infty(0,T;L^{r_2}(\Omega))} \|\uvec\|_{L^2(0,T;H^1(\Omega))}
        \\
        &\leq \curlyC_0 \|\eta\|_{L^\infty(0,T)} \|\rho^\theta\|_{L^\infty(0,T;L^{r_2}(\Omega))}
        \leq 
        \curlyC_0 \|\eta\|_{L^\infty(0,T)},
    \end{align*}
    since $\theta r_2 \leq \tgamma$.\\
    For $I_4^\delta$, we have due to $\eqref{lem: Improved Space Time Pressure : Convergence of r_delta}$ that
    \begin{align*}
        |I_4^\delta|
        &\leq 
        \curlyC_0 \|\eta\|_{L^\infty(0,T)}\Bigl\| \Bog\Bigl( r_\delta - \fint_\Omega r_\delta \Bigr) \Bigr\|_{L^2(0,T;L^{r_1}(\Omega))}
        \leq 
        \curlyC_0 \|\eta\|_{L^\infty(0,T)}\|r_\delta \|_{L^2(0,T;L^{r_3}(\Omega))} \to 0
    \end{align*}
    as $\delta \to 0$, since $r_3 \leq 2$.\\
    For $I_5^\delta$, we obtain 
    \begin{align*}
        |I_5^\delta|
        &\leq 
        \curlyC_0\|\eta\|_{L^\infty(0,T)} \|\nabla \uvec\|_{L^2(\OmegaT)}\Bigl\| \nabla\Bog\Bigl(S_\delta[b(\rho)] - \fint_\Omega S_\delta[b(\rho)] \Bigr) \Bigr\|_{L^2(\OmegaT)}
        \\
        &\leq 
        \curlyC_0 \|\eta\|_{L^\infty(0,T)} \|S_\delta[b(\rho)]\|_{L^2(\OmegaT)}
        \leq 
        \curlyC_0 \|\eta\|_{L^\infty(0,T)} \|\rho^\theta\|_{L^\infty(0,T;L^2(\Omega))}
        \\
        &\leq 
        \curlyC_0 \|\eta\|_{L^\infty(0,T)},
    \end{align*}
    since $2\theta \leq 2\leq \tgamma$.\\
    For $I_6^\delta$, we estimate
    \begin{align*}
        |I_6^\delta|
        &\leq 
        \|\eta\|_{L^\infty(0,T)}
        \|\rho\uvec\otimes\uvec\|_{L^2(0,T;L^{\frac{3\tgamma}{\tgamma+3}}(\Omega))} \Bigl\|\nabla \Bog\Bigl( S_\delta[b(\rho)] - \fint_\Omega S_\delta[b(\rho)]\Bigr) \Bigr\|_{L^2(0,T;L^{r_2}(\Omega))}
        \\
        &\leq 
        \curlyC_0 \|\eta\|_{L^\infty(0,T)} \|S_\delta[b(\rho)]\|_{L^2(0,T;L^{r_2}(\Omega))}
        \leq 
        \curlyC_0 \|\eta\|_{L^\infty(0,T)} \|\rho^\theta\|_{L^2(0,T;L^{r_2}(\Omega))} 
        \\
        &\leq 
        \curlyC_0 \|\eta\|_{L^\infty(0,T)},
    \end{align*}
    since $\theta r_2 \leq \tgamma$.\\
    For $I_7^\delta$, we have 
    \begin{align*}
         |I_7^\delta|
         &\leq 
         \curlyC_0 \|\eta\|_{L^\infty(0,T)} \|\div\uvec\|_{L^2(\OmegaT)} \|S_\delta[b(\rho)]\|_{L^2(\OmegaT)}
         \\
         &\leq 
         \curlyC_0 \|\eta\|_{L^\infty(0,T)} \|\rho^\theta\|_{L^\infty(0,T;L^2(\Omega)}
         \leq 
         \curlyC_0 \|\eta\|_{L^\infty(0,T)},
    \end{align*}
    since $2 \theta \leq 2 \leq \tgamma$.\\
    For $I_8^\delta$, we obtain
    \begin{align*}
        |I_8^\delta|
        &\leq 
        \|\eta\|_{L^\infty(0,T)}
        \|\rho\nabla c\|_{L^2(0,T;L^{\frac{6\tgamma}{\tgamma + 6}}(\Omega))}
        \Bigl\|\Bog\Bigl( S_\delta[b(\rho)] - \fint_\Omega S_\delta[b(\rho)] \Bigr)\Bigr\|_{L^2(0,T;L^{r_1}(\Omega))}
        \\
        &\leq
        \curlyC_0 \|\eta\|_{L^\infty(0,T)} \|S_\delta[b(\rho)]\|_{L^2(0,T;L^{r_3}(\Omega))}
        \leq
        \curlyC_0 \|\eta\|_{L^\infty(0,T)} \|b(\rho)\|_{L^2(0,T;L^{r_3}(\Omega))}
        \\
        &\leq 
        \curlyC_0\|\eta\|_{L^\infty(0,T)}\|\rho^\theta\|_{L^\infty(0,T;L^{r_3}(\Omega))}
        \leq \curlyC_0 \|\eta\|_{L^\infty(0,T)},
    \end{align*}
    since $\theta r_3 \leq 2 \leq  \tgamma$.\\
    For $I_9^\delta$, we set $r_4:=\frac{6\tgamma}{5\tgamma-3}$ and estimate
    \begin{align*}
        |I_9^\delta|
        &\leq 
        \|\partial_t\eta\|_{L^1(0,T)}
        \|\rho\uvec\|_{L^\infty(0,T;L^{\frac{2\tgamma}{\tgamma+1}}(\Omega))}\Bigl\|\Bog\bigl( S_\delta[b(\rho)] - \fint_\Omega S_\delta[b(\rho)]\Bigr) \Bigr\|_{L^\infty(0,T;L^{\frac{2\tgamma}{\tgamma-1}}(\Omega))}
        \\
        &\leq
        \curlyC_0
        \|\partial_t \eta \|_{L^1(0,T)}
        \|S_\delta[b(\rho)]\|_{L^\infty(0,T;L^{r_4}(\Omega))}
        \leq 
        \curlyC_0\|\partial_t \eta \|_{L^1(0,T)} \|\rho^\theta\|_{L^\infty(0,T;L^{r_4}(\Omega))}
        \\
        &\leq \curlyC_0 \|\partial_t \eta \|_{L^1(0,T)},
    \end{align*}
    since $\theta r_4 \leq 2 \leq  \tgamma$.\\
    In total we deduce after passing to the limit $\delta\to 0$ in $\eqref{lem: Improved Space Time Pressure : Momentum Equation Tested}$ with Lebesgue's dominated convergence theorem that
    \begin{align}\label{lem: Improved Space Time Pressure : Final Estimate dependent on b}
        \int_0^T \int_\Omega 
        \eta \, \Peff(\rho) b(\rho) \, \dd x \, \dd t \leq \curlyC_0\Bigl(\|\eta\|_{L^\infty(0,T)} + \|\partial_t \eta \|_{L^1(0,T)}\Bigr)
    \end{align}
    for any $\eta \in \mathcal{D}((0,T))$ and any $b \in \mathcal{F}$ satisfying $\eqref{lem: Improved Space Time Pressure : Bound on b}$.
    Now, we choose for $k \in \NN$ non-negative functions $b_k \in \mathcal{F}$ with the property that
    \begin{align}\label{lem: Improved Space Time Pressure : pointwise convergence approximating b_k}
        0 \leq b_k(z) \to z^\theta \quad \forall \, z \in [0,\infty) 
    \end{align}
    and
    \begin{align*}
        |b_k(z)| + |b_k^\prime(z) z | \leq \curlyC_0 z^\theta \quad \forall z\, \in [0,\infty),
    \end{align*}
    where the generic positive constant $\curlyC_0>0$ does not depend on $k$ by our convention.
    With $\eqref{lem: Improved Space Time Pressure : Final Estimate dependent on b}$, $\eqref{lem: Improved Space Time Pressure : pointwise convergence approximating b_k}$ and Fatou's lemma, we obtain
    \begin{align}\label{estimate with eta}
        \int_0^T \int_\Omega 
        \eta \,
        \Peff(\rho) \rho^\theta 
        \, \dd x \, \dd t
        \leq 
        \liminf\limits_{k\to\infty}
        \int_0^T \int_\Omega 
        \Peff(\rho) b_k(\rho)
        \, \dd x \, \dd t
        \leq 
        \curlyC_0 \Bigl( \|\eta\|_{L^\infty(0,T)} + \|\partial_t \eta\|_{L^1(0,T)} \Bigr),
    \end{align}
    for any $\eta \in \mathcal{D}(0,T)$.
    By approximating $\mathrm{Id}_{(0,T)}$ with functions $\eta\in \mathcal{D}((0,T))$, we conclude from $\eqref{estimate with eta}$ precisely $\eqref{lem: Improvec Space Time Pressure I}$.
    From $\eqref{lem: Improvec Space Time Pressure I}$ we deduce $\eqref{lem: Improved Space Time Pressure II}$ by using $\eqref{bound for Peff}$ and Hölder's inequality.
\end{proof}
As a consequence of the estimates in Lemma~\ref{prop: Energy Dissipation Bounds}, Lemma~\ref{lem: Parabolic Regularity} and Lemma~\ref{lem: Improved Space Time Pressure}, we obtain the following uniform bounds for a sequence of finite energy weak solutions ${(\rho_n,\uvec_n,c_n)}_{n\in\NN}$ to the initial-boundary value problem $\eqref{NSK_parabolic}$--$\eqref{NSK Parabolic Boundary Conditions}$ emanating from initial conditions ${(\rho^0_n,\uvec^0_n,c^0_n)}_{n\in\NN}$ satisfying the uniform bound $\eqref{thm: Main Result Result 1}$.

\begin{proposition}\label{Prop: Uniform-in-n Bounds}
    Let $T,\mu,\lambda,\capi,\coup,\paracoup>0$ and let $\Omega \subseteq \RR^3$ be a bounded domain with regular boundary.
    Assume that $P$ is admissible with growth rate $\gamma \in (1,\infty)$, $\tgamma:=\max\{2,\gamma\}$ and let $\theta,\delta$ be defined as in Lemma~\ref{lem: Improved Space Time Pressure}.
    For $n \in \NN$, let initial data
    \begin{align*}
        \rho_n^0 \in L^\tgamma(\Omega), \quad 
        \rho_n^0 \geq 0 \quad \text{a.e.}, \quad
        \uvec_n^0 \in L^{\frac{2\tgamma}{\tgamma-1}}(\Omega;\RR^3), \quad
        c^0_n \in H^{1}(\Omega)
    \end{align*}
    be given satisfying the uniform bound $\eqref{thm: Main Result Result 1}$ and let $(\rho_n,\uvec_n,c_n)$ denote a finite energy weak solutions to the initial-boundary value problem $\eqref{NSK_parabolic}$--$\eqref{NSK Parabolic Boundary Conditions}$ existing on $[0,T]$ and emanating from the initial data $(\rho^0_n,\uvec^0_n,c^0_n)$.\\
    Then we have that
    \begin{equation}
        \begin{aligned}
            &{(\rho_n)}_{n\in\NN} \quad \text{is uniformly bounded in } L^\infty(0,T;L^\tgamma(\Omega))\cap L^{\tgamma+\theta}(\OmegaT),\\
            &{(\rho_n\uvec_n)}_{n\in\NN} \quad \text{is uniformly bounded in } L^\infty(0,T;L^\frac{2\tgamma}{\tgamma+1}(\Omega;\RR^3)),\\
            &{(\rho_n\uvec_n\otimes\uvec_n)}_{n\in\NN} \quad \text{is uniformly bounded in } L^2(0,T;L^{\frac{3\tgamma}{\tgamma+3}}(\Omega;\RR^{3\times3})),\\
            &{(\Peff(\rho_n))}_{n\in\NN} \quad \text{is uniformly bounded in } L^\delta(\OmegaT),\\
            &{(c_n)}_{n\in\NN} \quad \text{is uniformly bounded in } L^2(0,T;H^2(\Omega))\cap L^\infty((0,T);H^1(\Omega)),\\
            &{(\rho_n\nabla c_n)}_{n\in\NN} \quad \text{is uniformly bounded in } L^2(0,T;L^\frac{6\tgamma}{\tgamma+6}(\Omega;\RR^3)),\\
            &{(\partial_t c_n)}_{n\in\NN} \quad \text{is uniformly bounded in } L^2(\OmegaT).
        \end{aligned}\label{Prop: Uniform-in-n Bounds Result 1}
    \end{equation}
    Moreover, we have that
    \begin{equation*}
        \begin{aligned}
            &{(\rho_n)}_{n\in\NN} \quad \text{is uniformly continuous in } W^{-1,\frac{2\tgamma}{\tgamma+1}}(\Omega),\\
            &{(\rho_n \uvec_n)}_{n\in\NN} \quad \text{is uniformly continuous in } W^{-1,q}(\Omega;\RR^3),
        \end{aligned}
    \end{equation*}
    where
    \begin{equation*}
        q := \min\Bigl( \frac{3\tgamma}{\tgamma+3}, \delta, \frac{6\tgamma}{\tgamma+6}, 2 \Bigr) \in (1,2].
    \end{equation*}
\end{proposition}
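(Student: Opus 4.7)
The proof naturally splits into the two claims of the statement.

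For the uniform bounds in $\eqref{Prop: Uniform-in-n Bounds Result 1}$, the plan is simply to apply Lemmas~\ref{prop: Energy Dissipation Bounds}, \ref{lem: Parabolic Regularity} and \ref{lem: Improved Space Time Pressure} to each finite energy weak solution $(\rho_n,\uvec_n,c_n)$ individually. The essential observation is the convention fixed at the beginning of Section~\ref{Sec : Uniform Estimates}: the generic constant $\curlyC_0$ appearing in all three lemmas depends only on $\|\rho^0\|_{L^\tgamma(\Omega)}$, $\|\uvec^0\|_{L^{2\tgamma/(\tgamma-1)}(\Omega)}$, $\|c^0\|_{H^1(\Omega)}$ and the fixed parameters $\mu,\lambda,\capi,\coup,\paracoup,T,|\Omega|,\gamma$. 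Since the initial data norms are uniformly controlled by hypothesis~$\eqref{thm: Main Result Result 1}$, the constant $\curlyC_0$ can be chosen independent of $n$, which promotes every estimate in Section~\ref{Sec : Uniform Estimates} to the uniform-in-$n$ estimates listed in $\eqref{Prop: Uniform-in-n Bounds Result 1}$.

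For the equi-continuity statements, I would read off the regularity of the time derivatives directly from the PDEs, interpreted distributionally. For the density, the continuity equation gives $\partial_t \rho_n = -\div(\rho_n\uvec_n)$, and the uniform bound on ${(\rho_n\uvec_n)}_n$ in $L^\infty(0,T;L^{2\tgamma/(\tgamma+1)}(\Omega))$ immediately yields a uniform bound on $\partial_t \rho_n$ in $L^\infty(0,T;W^{-1,2\tgamma/(\tgamma+1)}(\Omega))$, hence uniform Lipschitz (in particular uniform) continuity of $t\mapsto\rho_n(t)$ into this negative Sobolev space.

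For the momentum, the plan is to rewrite $\eqref{NSK_parabolic}_2$ as
\begin{equation*}
\partial_t(\rho_n\uvec_n) = -\div(\rho_n\uvec_n\otimes\uvec_n) - \nabla\Peff(\rho_n) + \mu\Delta\uvec_n + (\lambda+\mu)\nabla\div\uvec_n + \coup\,\rho_n\nabla c_n,
\end{equation*}
and to locate each term in a negative Sobolev space using $\eqref{Prop: Uniform-in-n Bounds Result 1}$: the convective term lies in $L^2(0,T;W^{-1,3\tgamma/(\tgamma+3)}(\Omega))$, the pressure gradient in $L^\delta(0,T;W^{-1,\delta}(\Omega))$, the viscous terms in $L^2(0,T;W^{-1,2}(\Omega))$ thanks to the $L^2H^1$ bound on $\uvec_n$, and the capillary forcing $\coup\rho_n\nabla c_n$ in $L^2(0,T;L^{6\tgamma/(\tgamma+6)}(\Omega))\hookrightarrow L^2(0,T;W^{-1,6\tgamma/(\tgamma+6)}(\Omega))$. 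Embedding all four contributions into $W^{-1,q}(\Omega)$ with $q$ the minimum of the four exponents and integrating in time produces the desired uniform equi-continuity of $t\mapsto(\rho_n\uvec_n)(t)$ in $W^{-1,q}(\Omega)$.

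The only genuine obstacle is bookkeeping: one must check that $q\in(1,2]$ so that $W^{-1,q}(\Omega)$ is the dual of $W^{1,q'}_0(\Omega)$ and all embeddings make sense. This is elementary since for $\tgamma\geq 2$ one has $3\tgamma/(\tgamma+3)\geq 6/5$, $6\tgamma/(\tgamma+6)\geq 3/2$, and $\delta=(\tgamma+\theta)/\tgamma>1$ in all cases covered by $\gamma\in(1,\infty)$. Once $q$ is fixed in this range, duality and Hölder's inequality in time reduce the problem to the already established uniform bounds, and no new analytic input is required.
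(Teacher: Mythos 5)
Your proposal is correct and follows essentially the same route as the paper: the uniform bounds are obtained by applying Lemmas~\ref{prop: Energy Dissipation Bounds}, \ref{lem: Parabolic Regularity} and \ref{lem: Improved Space Time Pressure} with the constant $\curlyC_0$ made $n$-independent via \eqref{thm: Main Result Result 1}, and the equi-continuity claims are read off from the continuity and momentum equations by placing each term in a negative Sobolev space and integrating in time. The paper phrases the second step by testing the weak formulations against fixed spatial test functions and estimating $\int_s^t(\cdots)\,\dd\tau$ via Hölder, which is the same argument as your bound on $\partial_t\rho_n$ and $\partial_t(\rho_n\uvec_n)$ in Bochner spaces over $W^{-1,\cdot}(\Omega)$.
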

\begin{proof}
    The uniform bounds $\eqref{Prop: Uniform-in-n Bounds Result 1}$ follow from Lemma~\ref{prop: Energy Dissipation Bounds}, Lemma~\ref{lem: Parabolic Regularity} and Lemma~\ref{lem: Improved Space Time Pressure}.
    To see that ${(\rho_n)}_{n\in\NN}$ is uniformly continuous in $W^{-1,\frac{2\tgamma}{\tgamma+1}}(\Omega)$, we fix some $\phi \in \mathcal{D}(\Omega)$.
    For any $n \in \NN$, the continuity equation $\eqref{Def: Renormalized Finite Energy Weak Solutions Continuity Eq}$ holds and thus
    \begin{equation*}
        \frac{\dd}{\dd t}\int_\Omega \rho_n \phi \, \dd x 
        = \int_\Omega  \rho_n\uvec_n \cdot \nabla \phi \,\dd x
        \quad \text{in } \mathcal{D}^\prime((0,T)).
    \end{equation*}
    This relation implies for any $0\leq s \leq t \leq T$ and any $n \in \NN$ that
    \begin{align*}
        &\int_\Omega (\rho_n(t) - \rho_n(s))\phi \, \dd x = 
        \int_s^t \int_\Omega \rho_n\uvec_n(\tau) \cdot \nabla \phi \, \dd x \dd \tau
        \\
        &\leq
        \|\rho_n\uvec_n\|_{L^\infty(0,T;L^{\frac{2\tgamma}{\tgamma+1}}(\Omega))} \|\phi\|_{W^{1,\frac{2\tgamma}{\tgamma-1}}_0(\Omega)}|t-s|
    \end{align*}
    and with the uniform bounds in $\eqref{Prop: Uniform-in-n Bounds Result 1}$ we conclude that $(\rho_n)_{n\in\NN}$ is uniformly continuous in $W^{-1,\frac{2\tgamma}{\tgamma+1}}(\Omega)$.
    To see that ${(\rho_n\uvec_n)}_{n\in\NN}$ is uniformly continuous in $W^{-1,q}(\Omega;\RR^3)$, we proceed analogously and fix some $\Psivec\in \curlyD(\Omega;\RR^3)$.
    For any $n \in \NN$, the momentum equation $\eqref{Def: Renormalized Finite Energy Weak Solutions Momentum Eq}$ holds and thus
    \begin{equation*}
        \begin{aligned}
            \frac{\dd}{\dd t} \int_\Omega (\rho\uvec)\cdot \Psivec \, \dd x
            &=
            \int_\Omega 
            \bigl(
            \rho_n \uvec_n \otimes \uvec_n : \nabla \Psivec
            + 
            \Peff(\rho_n) \div \Psivec
            \bigr)
            \, \dd x \, \dd t
            \\
            &\quad - \int_\Omega 
            \bigr(
            \mu \nabla \uvec_n : \nabla \Psivec
            +
            (\lambda+\mu)\div\uvec_n\div\Psivec
            -
            \coup \rho_n \nabla c_n  \cdot \Psivec
            \bigr)
            \, \dd x
            \quad \text{in } \mathcal{D}^\prime((0,T)).
        \end{aligned}
    \end{equation*}
    This relation implies for any $0\leq s \leq t \leq T$ and any $n \in \NN$ that
    \begin{equation*}
        \begin{aligned}
            &\int_\Omega \Bigl((\rho_n \uvec_n) (t) - (\rho_n\uvec_n)(s) \Bigr) \cdot \Psivec \, \dd x \\
            &\leq
            \curlyC_0
            \int_s^t \Bigl( 
            \|\rho_n\uvec_n\otimes\uvec_n\|_{L^{\frac{3\tgamma}{\tgamma+3}}(\Omega)}
            +
            \|\Peff(\rho_n)\|_{L^\delta(\Omega)}
            +
            \|\nabla \uvec_n\|_{L^2(\Omega)}
            +
            \|\rho_n\nabla c_n\|_{L^{\frac{6\tgamma}{\tgamma+6}}(\Omega)} 
            \Bigr) \, \dd t\\
            &\qquad \times \|\Psivec\|_{W^{1,q^\prime}_0(\Omega)}
        \end{aligned}
    \end{equation*}
    where $q^\prime \in (1,\infty)$ denotes the conjugate Hölder exponent to $q$, i.e., $\frac{1}{q} + \frac{1}{q^\prime} = 1$. 
    By the uniform bounds in $\eqref{Prop: Uniform-in-n Bounds Result 1}$, we conclude from this estimate that ${(\rho_n\uvec_n)}_{n\in\NN}$ is uniformly continuous in $W^{-1,q}(\Omega;\RR^3)$.
\end{proof}

\section{Proof of the Main Theorem}\label{Sec : Proof of the Main Theorem}

This section is devoted to the complete proof for our main result Theorem~\ref{thm: Main Result}.
Before starting with the proof, let us recall the following result on Young measures that we will use in the sequel.
For a more detailed exposition on Young measures we refer to \cite{PRG}.

\begin{proposition}\label{Prop : Young Measures}
    Let $d \in \NN$ and let $Q\subseteq \RR^d$ be a bounded domain.
    For $n \in \NN$, let integrable functions $f_n \in L^1(Q)$ be given such that ${(f_n)}_{n\in\NN}$ is uniformly bounded in $L^1(Q)$.
    Then, there exists some Young measure $\nu \in \mathcal{Y}(Q;\RR)$ such that, after passing to a non-relabeled subsequence,
    \begin{equation*}
        \nu_{f_n} \weakstar \nu \quad \text{in } L^\infty_{\mathrm{w}^*}(Q;\mathcal{M}(\RR)).
    \end{equation*}
    Moreover, for any $b \in C^0(\RR)$ such that ${(b(f_n))}_{n\in\NN}$ is uniformly integrable, i.e.
    \begin{equation*}
        \lim\limits_{k\to \infty} \sup\limits_{n\in\NN} \int_{\{|b(f_n)|\geq k\}}
        |b(f_n)|\, \dd x = 0,
    \end{equation*}
    we have that $\langle \nu , b \rangle \in L^1(Q)$ and
    \begin{equation*}
        b(f_n) \weak \langle \nu,b\rangle \quad \text{in } L^1(Q).
    \end{equation*}
    In particular, for $p \in (1,\infty)$, we have for any $b \in C^0(\RR)$ such that ${(b(f_n))}_{n\in\NN}$ is uniformly bounded in $L^p(Q)$ that $\langle \nu,b\rangle \in L^p(Q)$ holds and that
    \begin{equation*}
        b(f_n) \weak \langle \nu ,b\rangle \quad \text{in } L^p(Q).
    \end{equation*}
\end{proposition}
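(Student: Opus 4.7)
The plan is to invoke the standard Fundamental Theorem of Young measures, adapting its proof to the precise duality framework introduced in the Notations section. I would break the argument into four conceptual steps.

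First, I would exploit the duality $L^\infty_{w^*}(Q;\mathcal{M}(\RR)) \cong (L^1(Q;C^0_0(\RR)))^*$, recalling from the Notations that $\mathcal{M}(\RR) = (C^0_0(\RR))^*$. For each $n$, the Young measure $\nu_{f_n}$ given by $(\nu_{f_n})_x = \delta_{f_n(x)}$ satisfies $\|(\nu_{f_n})_x\|_{\mathcal{M}(\RR)} = 1$ for a.e.\ $x \in Q$, so $\|\nu_{f_n}\|_{L^\infty_{w^*}(Q;\mathcal{M}(\RR))} \leq 1$. Since $L^1(Q;C^0_0(\RR))$ is separable, the sequential Banach--Alaoglu theorem yields a subsequence and a limit $\nu \in L^\infty_{w^*}(Q;\mathcal{M}(\RR))$ with $\nu_{f_n} \weakstar \nu$.

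Second, I would verify that $\nu \in \mathcal{Y}(Q;\RR)$. Non-negativity of $\nu_x$ for a.e.\ $x$ follows by testing the weak-$*$ convergence against non-negative elements of $L^1(Q;C^0_0(\RR))$ of the form $\chi_E \otimes \phi$ with $E \subseteq Q$ measurable and $\phi \in C^0_0(\RR)$, $\phi \geq 0$. To show $\nu_x(\RR) = 1$ for a.e.\ $x$, I use the tightness inherited from the $L^1$-bound: by Chebyshev's inequality, $|\{x \in Q : |f_n(x)| > R\}| \leq M/R$ uniformly in $n$, where $M := \sup_n \|f_n\|_{L^1(Q)}$. Picking a cutoff $\chi_R \in C^0_0(\RR)$ with $0 \leq \chi_R \leq 1$ and $\chi_R \equiv 1$ on $[-R,R]$, for any measurable $E \subseteq Q$,
\begin{equation*}
    \int_E \langle (\nu_{f_n})_x, \chi_R\rangle\,\dd x = \int_E \chi_R(f_n(x))\,\dd x \geq |E| - M/R.
\end{equation*}
Passing to the limit $n \to \infty$ (the integrand on the left is $\chi_E \otimes \chi_R \in L^1(Q;C^0_0(\RR))$) and then $R \to \infty$ gives $\int_E \nu_x(\RR)\,\dd x \geq |E|$, so $\nu_x(\RR) \geq 1$ a.e. The reverse inequality is immediate from the weak-$*$ bound, hence $\nu_x \in \mathcal{P}(\RR)$ a.e.

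Third, for the convergence of $b(f_n)$ with $b \in C^0(\RR)$ and $\{b(f_n)\}_{n\in\NN}$ uniformly integrable, I fix $\varphi \in L^\infty(Q)$ and split $b = b\chi_R + b(1-\chi_R)$ with $\chi_R$ as above. The truncated part $b\chi_R \in C^0_0(\RR)$ gives $\varphi \otimes (b\chi_R) \in L^1(Q;C^0_0(\RR))$, so the weak-$*$ convergence yields $\int_Q \varphi\, (b\chi_R)(f_n)\,\dd x \to \int_Q \varphi\,\langle \nu_x, b\chi_R\rangle\,\dd x$. The remainder $\int_Q \varphi\, b(f_n)(1-\chi_R)(f_n)\,\dd x$ is controlled in terms of $\sup_n \int_{\{|f_n| > R\}} |b(f_n)|\,\dd x$, which tends to $0$ as $R \to \infty$ by uniform integrability combined with the uniform smallness of $|\{|f_n| > R\}|$. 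An analogous tail estimate on the limit side, using Fatou's lemma applied to $\langle \nu_x, |b|(1-\chi_R)\rangle$, shows $\int_Q |\varphi|\,\langle \nu_x, |b|(1-\chi_R)\rangle\,\dd x \to 0$. This simultaneously shows $\langle \nu, b\rangle \in L^1(Q)$ and $b(f_n) \weak \langle \nu, b\rangle$ in $L^1(Q)$. The final $L^p$-statement follows since $L^p$-boundedness with $p > 1$ implies uniform integrability (via $\int_{\{|b(f_n)|\geq k\}} |b(f_n)|\,\dd x \leq k^{1-p} \|b(f_n)\|_{L^p(Q)}^p$), and the limit $\langle \nu, b\rangle$ lies in $L^p(Q)$ by lower semicontinuity of the $L^p$-norm under weak convergence.

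The main obstacle is the tightness step ensuring $\nu_x$ is a probability measure rather than a sub-probability measure with mass possibly lost at infinity; without the $L^1$-bound (or at least tightness in measure), the limit could have smaller total mass. Once tightness is established, the remaining steps reduce to careful truncation combined with the uniform integrability hypothesis to control the tails.
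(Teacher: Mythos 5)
Your argument is correct, but note that the paper does not actually prove this proposition: its ``proof'' is a one-line citation to the (more general) Theorem~6.2 in Pedregal's book, so what you have written is a self-contained reconstruction of the standard proof of the fundamental theorem of Young measures rather than a genuinely different route. All four steps are sound: the duality $L^\infty_{\mathrm{w}^*}(Q;\mathcal{M}(\RR))\cong\bigl(L^1(Q;C^0_0(\RR))\bigr)^*$ together with sequential Banach--Alaoglu, the Chebyshev/tightness argument ruling out loss of mass at infinity (correctly ordered: $n\to\infty$ first, then $R\to\infty$), the truncation $b=b\chi_R+b(1-\chi_R)$ combined with the uniform-integrability hypothesis, and the observation that $L^p$-boundedness with $p>1$ implies uniform integrability. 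The only place that deserves one more line is the tail estimate on the limit side: $|b|(1-\chi_R)$ need not lie in $C^0_0(\RR)$ when $b$ is merely continuous, so you should first test against the doubly truncated function $|b|(1-\chi_R)\chi_{R'}\in C^0_0(\RR)$, bound the resulting integral by $\sup_n\int_{\{|f_n|>R\}}|b(f_n)|\,\dd x$ uniformly in $R'$, and then let $R'\to\infty$ by monotone convergence; this simultaneously yields $\langle\nu,|b|\rangle\in L^1(Q)$ and the vanishing of the tail. This is routine and your appeal to Fatou's lemma points in the right direction, so I regard the proposal as complete.
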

\begin{proof}
    See the (more general) Theorem 6.2 in \cite{PRG}. 
\end{proof}
We emphasize that Proposition~\ref{Prop : Young Measures} is crucial to obtain a closed effective model in the limit $n \to \infty$ since, under the hypotheses of Theorem~\ref{thm: Main Result}, the sequence of densities ${(\rho_n)}_{n\in\NN}$ is expected to be highly oscillating and the pressure function $\Peff$ is not an affine function.\\
Let us now start with the proof of our main result Theorem~\ref{thm: Main Result}.
We split this proof into two parts: In a first part, we show that the uniform bounds established in Section~\ref{Sec : Uniform Estimates} (cf.\ Proposition~\ref{Prop: Uniform-in-n Bounds}) are strong enough in order to extract some limit quantities in the form of a Young measure and some deterministic hydrodynamic quantities that satisfy $\eqref{thm: Main Result Continuity Eq}, \eqref{thm: Main Result Momentum Eq}$ and $\eqref{thm: Main Result Parabolic Eq}$ including a relation between the hydrodynamic part and the Young measure.
In a second part, we then close the model by verifying that the Young measure satisfies the kinetic equation $\eqref{thm: Main Result Kinetic Eq}$ resulting into the proof of Theorem~\ref{thm: Main Result}.
The first part of the proof of Theorem~\ref{thm: Main Result} is given by the following proposition.
\begin{proposition}\label{thm: Proof Main Result Part I}
    Let the hypotheses and notations of Theorem~\ref{thm: Main Result} hold true and let $\theta,\delta$ be defined as in Lemma~\ref{lem: Improved Space Time Pressure}.
    Then, there exist a Young measure $\nu \in \mathcal{Y}(\Omega_T;\RR)$ and functions $\uvec \in L^2(0,T;H^{1}_0(\Omega;\RR^3))$, $c \in L^2(0,T;H^{2}(\Omega))\cap C([0,T];H^{1}(\Omega))$, $\rho \in C_{\mathrm{w}}([0,T];L^\tgamma(\Omega))\cap L^{\tgamma+\theta}(\OmegaT)$, $\oP \in L^\delta(\OmegaT)$ with
    \begin{equation*}
        \partial_t c \in L^2(\OmegaT), \quad 
        \rho \uvec \in C_{\mathrm{w}}\bigl([0,T];L^{\frac{2\tgamma}{\tgamma+1}}(\Omega;\RR^3)\bigr),
    \end{equation*}
    and 
    \begin{equation*}
        \rho(t,x) \geq 0,\quad \mathrm{spt}\, \nu_{(t,x)} \subseteq [0,\infty)
    \end{equation*} 
    for almost all $(t,x) \in \Omega_T$, such that, after passing to a non-relabeled subsequence,
    \begin{equation*}
    \begin{aligned}
        &\nu_{\rho_n} \weakstar \nu \quad \text{in } L^\infty_{\mathrm{w}^*}(\Omega_T; \mathcal{M}(\RR)), 
        \quad 
        \rho_n \to \rho \quad \text{in } C_{\mathrm{w}}([0,T];L^\tgamma(\Omega)), \\
        &\uvec_n \weak \uvec \quad \text{in } L^2(0,T;H^{1}_0(\Omega;\RR^3)),
        \quad 
        c_n \to c \quad \text{in } L^2(0,T;H^{1}(\Omega)),\\ 
        &\rho_n\uvec_n \to \rho \uvec \quad \text{in } C_{\mathrm{w}}([0,T];L^{\frac{2\tgamma}{\tgamma+1}}(\Omega;\RR^3)),
        \quad 
        \Peff(\rho_n) \weak \oP \quad \text{in } L^{\delta}(\OmegaT),\\
        &\rho_n\nabla c_n \weak \rho\nabla c \quad \text{in } L^2(0,T;L^\frac{6\tgamma}{\tgamma+6}(\Omega)),
        \quad 
        \rho^0_n \weak \rho^0 \quad \text{in } L^\tgamma(\Omega).
    \end{aligned}      
    \end{equation*}
    Moreover, $(\rho,\uvec,c)$ satisfies $\eqref{thm: Main Result Continuity Eq}$--$\eqref{thm: Main Result Parabolic Eq}$ and we have for almost all $(t,x) \in \Omega_T$ that
    \begin{equation}\label{thm: Proof Main Result Part I : Relations}
        \rho(t,x) = \int_{[0,\infty)} \xi \, \dd \nu_{(t,x)} (\xi), 
        \quad 
        \oP(t,x) = \int_{[0,\infty)} \Peff(\xi) \, \dd \nu_{(t,x)}(\xi)
    \end{equation}
    and for almost all $x \in \Omega $ that
    \begin{equation}\label{thm: Proof Main Result Part I : Relations Initially}
        \mathrm{spt}\, \nu^0_x \subseteq [0,\infty), \quad \rho^0(x) = \int_{[0,\infty)} \xi \, \dd \nu^0(x).
    \end{equation}
\end{proposition}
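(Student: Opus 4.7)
The approach consists of three steps: extracting convergent subsequences from the uniform bounds of Proposition~\ref{Prop: Uniform-in-n Bounds}, identifying the nonlinear weak limits as Young-measure expectations via Proposition~\ref{Prop : Young Measures}, and passing to the limit in the weak formulations of Definition~\ref{Def:Renormalized Finite Energy Weak Solutions}. Banach--Alaoglu and reflexivity yield, along a non-relabeled subsequence, all the linear weak and weak-$*$ convergences claimed: $\uvec_n \weak \uvec$ in $L^2(0,T;H^1_0)$, $c_n \weakstar c$ in $L^\infty(0,T;H^1)$, $c_n \weak c$ in $L^2(0,T;H^2)$, $\partial_t c_n \weak \partial_t c$ in $L^2(\OmegaT)$, $\Peff(\rho_n) \weak \oP$ in $L^\delta(\OmegaT)$, and $\rho_n^0 \weak \rho^0$ in $L^{\tgamma}(\Omega)$ for some limit to be identified. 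The classical Aubin--Lions lemma applied to $c_n$ (bounded in $L^2(0,T;H^2)$ with $\partial_t c_n$ bounded in $L^2(\OmegaT)$ and $H^2\hookrightarrow\hookrightarrow H^1$) upgrades this to strong convergence $c_n \to c$ in $L^2(0,T;H^1)$, and in particular $\nabla c_n \to \nabla c$ strongly in $L^2(\OmegaT)$; $c\in C([0,T];H^1)$ then follows by standard Lions--Magenes interpolation. The equi-continuity in time of $(\rho_n)$ in $W^{-1,2\tgamma/(\tgamma+1)}$ and of $(\rho_n\uvec_n)$ in $W^{-1,q}$ provided by Proposition~\ref{Prop: Uniform-in-n Bounds}, combined with the uniform $L^\infty_t L^{\tgamma}_x$ and $L^\infty_t L^{2\tgamma/(\tgamma+1)}_x$ bounds and the compact Sobolev embeddings $L^{\tgamma}\hookrightarrow\hookrightarrow W^{-1,2\tgamma/(\tgamma+1)}$ and $L^{2\tgamma/(\tgamma+1)}\hookrightarrow\hookrightarrow W^{-1,q}$, yields via the weak Arzela--Ascoli lemma of the appendix the convergences $\rho_n \to \rho$ in $C_w([0,T];L^{\tgamma})$ and $\rho_n\uvec_n \to \mathbf{m}$ in $C_w([0,T];L^{2\tgamma/(\tgamma+1)})$ for some limit $\mathbf{m}$ still to be identified.

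Next, Proposition~\ref{Prop : Young Measures} applied to $(\rho_n)$ and $(\rho_n^0)$ provides Young measures $\nu\in\mathcal{Y}(\OmegaT;\RR)$ and $\nu^0\in\mathcal{Y}(\Omega;\RR)$ with $\nu_{\rho_n}\weakstar\nu$ and $\nu_{\rho_n^0}\weakstar\nu^0$; non-negativity of $\rho_n$ and $\rho_n^0$ transfers to $\mathrm{spt}\,\nu_{(t,x)}\subseteq[0,\infty)$ and $\mathrm{spt}\,\nu^0_x\subseteq[0,\infty)$ almost everywhere. Since $(\rho_n)$ is uniformly bounded in $L^{\tgamma+\theta}(\OmegaT)$ and $(\Peff(\rho_n))$ is uniformly bounded in $L^\delta(\OmegaT)$ with $\tgamma+\theta>1$ and $\delta>1$, both sequences are equi-integrable, and the second part of Proposition~\ref{Prop : Young Measures} gives $\rho_n\weak\langle\nu,\xi\rangle$ and $\Peff(\rho_n)\weak\langle\nu,\Peff\rangle$ in $L^1(\OmegaT)$. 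Uniqueness of weak limits then identifies $\rho=\langle\nu,\xi\rangle$ and $\oP=\langle\nu,\Peff\rangle$ a.e.\ in $\OmegaT$, which in particular forces $\rho\geq 0$. The analogous identities for $\rho^0$ and $\nu^0$ on $\Omega$ are proved identically using the uniform $L^{\tgamma}$ bound on $(\rho_n^0)$.

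Finally, I would pass to the limit in the integral identities \eqref{Def: Renormalized Finite Energy Weak Solutions Continuity Eq}--\eqref{Def: Renormalized Finite Energy Weak Solutions Parabolic Eq}. All linear contributions and the time-endpoint terms pass by the weak/weak-$*$ and $C_w$-convergences above; the pressure term converges via $\Peff(\rho_n)\weak\oP$ in $L^\delta(\OmegaT)$; and the capillary term $\rho_n\nabla c_n$ converges weakly to $\rho\nabla c$ in $L^2(0,T;L^{6\tgamma/(\tgamma+6)})$ by a weak-strong product argument using $\rho_n\weakstar\rho$ in $L^\infty(0,T;L^{\tgamma})$ against $\nabla c_n\to\nabla c$ strongly in $L^2(\OmegaT)$. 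The main obstacle, and the heart of the proof, is the convective term $\rho_n\uvec_n\otimes\uvec_n$, which I would treat by the classical Lions--Feireisl strategy. The $C_w([0,T];L^{2\tgamma/(\tgamma+1)})$ convergence of $\rho_n\uvec_n$, combined with the compact embedding $L^{2\tgamma/(\tgamma+1)}\hookrightarrow\hookrightarrow W^{-1,2}(\Omega)$ (valid since $\tgamma\geq 2$, so that the Sobolev embedding $W^{1,2}_0\hookrightarrow L^{2\tgamma/(\tgamma-1)}$ is compact in 3D), upgrades to strong convergence $\rho_n\uvec_n\to\mathbf{m}$ in $C([0,T];W^{-1,2}(\Omega))$. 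Testing this strong convergence against the weak convergence $\uvec_n\weak\uvec$ in $L^2(0,T;H^1_0)$ via the $W^{-1,2}$--$H^1_0$ duality both identifies $\mathbf{m}=\rho\uvec$ and delivers
\[
    \int_0^T\!\!\int_\Omega \rho_n\uvec_n\otimes\uvec_n : \nabla\Psivec\,\dd x\,\dd t \;\longrightarrow\; \int_0^T\!\!\int_\Omega \rho\uvec\otimes\uvec : \nabla\Psivec\,\dd x\,\dd t
\]
for every test function $\Psivec\in C^1_c([0,T]\times\Omega)$, after a harmless density approximation to secure the $H^1_0$ multiplication by derivatives of $\Psivec$; the uniform $L^2(0,T;L^{3\tgamma/(\tgamma+3)})$ bound then promotes this to weak convergence in a Lebesgue space. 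The viability of this strategy rests crucially on the improved space-time pressure bound of Lemma~\ref{lem: Improved Space Time Pressure}, which alone supplies the equi-integrability of $(\Peff(\rho_n))$ needed for the Young-measure identification of $\oP$, and on the $\tgamma\geq 2$ threshold that makes the compact embedding into $W^{-1,2}$ available.
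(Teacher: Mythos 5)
Your proposal is correct and follows essentially the same route as the paper's proof: Banach--Alaoglu plus the uniform bounds of Proposition~\ref{Prop: Uniform-in-n Bounds} for the weak limits, the weak Arzel\`a--Ascoli lemma and compact Sobolev embeddings into negative-order spaces to upgrade $\rho_n$ and $\rho_n\uvec_n$ to strong convergence that can be paired with $\uvec_n\weak\uvec$ (this duality pairing is exactly Lemma~\ref{lem: Product Convergences}), Aubin--Lions for $c_n$, and Proposition~\ref{Prop : Young Measures} with the equi-integrability from Lemma~\ref{lem: Improved Space Time Pressure} to identify $\rho$ and $\oP$ as moments of $\nu$. The only cosmetic deviation is your claim of strong convergence of $\rho_n\uvec_n$ in $C([0,T];W^{-1,2})$ where the paper's Lemma~\ref{Lemma : Weak Continuity in Time} delivers $L^r(0,T;W^{-1,s})$, which suffices and changes nothing.
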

\begin{proof}
    By Proposition~\ref{Prop: Uniform-in-n Bounds}, we have that ${(\rho_n)}_{n\in\NN}$ is uniformly bounded in $L^\infty(0,T;L^\tgamma(\Omega))\cap L^{\tgamma+\theta}(\OmegaT)$ and therefore, by Proposition~\ref{Prop : Young Measures}, there exists some Young measure $\nu \in \mathcal{Y}(\Omega_T;\RR)$, such that, after passing to a non-relabeled subsequence,
    \begin{equation*}
        \nu_{\rho_n}\weakstar \nu \quad \text{in } L^\infty(\Omega_T;\mathcal{M}(\RR)).
    \end{equation*}
    By Proposition~\ref{Prop: Uniform-in-n Bounds}, we have that the sequences ${(\rho_n)}_{n\in\NN}$ and ${(\rho_n\uvec_n)}_{n\in\NN}$ are uniformly bounded in $L^\infty(0,T;L^\tgamma(\Omega))\cap L^{\tgamma+\theta}(\OmegaT)$ and $L^\infty(0,T;L^\frac{2\tgamma}{\tgamma+1}(\Omega;\RR^3))$, respectively, and uniformly continuous in $W^{-1,\frac{2\tgamma}{\tgamma+1}}(\Omega)$ and $W^{-1,q}(\Omega;\RR^3)$, respectively, where $q >1$ is defined as in Proposition~\ref{Prop: Uniform-in-n Bounds}.
    Thus, by Lemma~\ref{Lemma : Weak Continuity in Time}, there exist two functions $\rho \in C_{\mathrm{w}}([0,T];L^\tgamma(\Omega))$ and $\overline{\rho\uvec}\in C_{\mathrm{w}}([0,T];L^{\frac{2\tgamma}{\tgamma+1}}(\Omega;\RR^3))$, such that, after passing to a non-relabeled subsequence,
    \begin{equation}\label{thm: Proof Main Result Part I : C_w Conv for rho and rho u}
        \rho_n \to \rho \quad \text{in } C_{\mathrm{w}}([0,T];L^\tgamma(\Omega)),
        \quad
        \rho_n \uvec_n \to \overline{\rho\uvec} \quad \text{in } C_{\mathrm{w}}([0,T];L^\frac{2\tgamma}{\tgamma+1}(\Omega;\RR^3)).
    \end{equation}
    By the Sobolev embedding theorem, we have that the embeddings
    \begin{equation*}
        L^\tgamma(\Omega) \hookrightarrow \hookrightarrow H^{-1}(\Omega), \quad 
        L^\frac{2\tgamma}{\tgamma+1}(\Omega) \hookrightarrow\hookrightarrow H^{-1}(\Omega)
    \end{equation*}
    are compact.
    With Lemma~\ref{Lemma : Weak Continuity in Time} we conclude
    \begin{equation}\label{thm: Proof Main Result Part I : Strong Conv for rho and rho u}
        \rho_n \to \rho \quad \text{in } L^2(0,T;H^{-1}(\Omega)), \quad 
        \rho_n\uvec_n \to \overline{\rho\uvec} \quad \text{in } L^2(0,T;H^{-1}(\Omega;\RR^3)).
    \end{equation}
    Furthermore, by Proposition~\ref{Prop: Uniform-in-n Bounds} and the Banach--Alaoglu theorem, we conclude that there exist functions $\uvec \in L^2(0,T;H^{1}_0(\Omega;\RR^3))$, $c \in L^2(0,T;H^{2}(\Omega))$ with $\partial_t c \in L^2(\OmegaT)$, $\oP \in L^\delta(\OmegaT)$, $\overline{\rho\uvec\otimes\uvec} \in L^2(0,T;L^\frac{3\tgamma}{\tgamma+3}(\Omega;\RR^{3\times 3}))$, $\overline{\rho\nabla c} \in L^2(0,T;L^{\frac{6\tgamma}{\tgamma+6}}(\Omega;\RR^3))$, such that, after passing to a non-relabeled subsequence,
    \begin{equation}\label{thm: Proof Main Result Part I : Weak Conv for u}
        \uvec_n \weak \uvec \quad \text{in } L^2(0,T;H^{1}_0(\Omega;\RR^3)),
    \end{equation}
    \begin{equation}\label{thm: Proof Main Result Part I : Weak Conv for rho u x u}
        \rho_n \uvec_n \otimes \uvec_n \weak \overline{\rho\uvec\otimes\uvec} \quad \text{in } L^2(0,T;L^\frac{3\tgamma}{\tgamma+3}(\Omega;\RR^{3\times 3})),
    \end{equation}
    \begin{equation}\label{thm: Proof Main Result Part I : Weak Conv for P}
        \Peff(\rho_n) \weak \oP \quad \text{in } L^\delta(\OmegaT),
    \end{equation}
    \begin{equation}\label{Thm: Proof Main Result Part I : weak convergence of rho_n nabla c_n}
        \rho_n\nabla c_n \weak \overline{\rho\nabla c} \quad \text{in } L^2(0,T;L^{\frac{6\tgamma}{\tgamma+6}}(\Omega;\RR^3))
    \end{equation}
    and
    \begin{equation}\label{thm: Proof Main Result Part I : Weak Conv for c and partial_t c}
        c_n \weak c \quad \text{in } L^2(0,T;H^{2}(\Omega)), \quad 
        \partial_t c_n \weak \partial_tc \quad \text{in } L^2(\OmegaT).
    \end{equation}
    By standard results on Bochner spaces (see e.g.~\cite{EPDE}), we conclude from the regularity of $c$ that $c \in C([0,T];H^{1}(\Omega))$.
    Moreover, as we have by the Sobolev embedding theorem $H^{2}(\Omega) \hookrightarrow \hookrightarrow H^{1}(\Omega) \hookrightarrow L^2(\Omega)$ where the first embedding is compact, we conclude from the uniform bounds in Proposition~\ref{Prop: Uniform-in-n Bounds} by applying the Aubin--Lions lemma (Lemma~\ref{thm: Aubin Lions}) that
    \begin{equation}\label{thm: Proof Main Result Part I : Strong Conv for c}
        c_n \to c \quad \text{in } L^2(0,T;H^{1}(\Omega)).
    \end{equation}
    By Lemma~\ref{lem: Product Convergences} we conclude with the convergences $\eqref{thm: Proof Main Result Part I : Strong Conv for rho and rho u}$ and $\eqref{thm: Proof Main Result Part I : Weak Conv for u}$ that 
    \begin{equation}\label{Thm: Proof Main Result Part I : m = rho u}
        \overline{\rho\uvec} = \rho \uvec \quad \text{a.e. in } \Omega_T.
    \end{equation}
    From this relation and the convergences $\eqref{thm: Proof Main Result Part I : Strong Conv for rho and rho u}$ and $\eqref{thm: Proof Main Result Part I : Weak Conv for u}$, we conclude again by Lemma~\ref{lem: Product Convergences} that
    \begin{equation}\label{Thm: Proof Main Result Part I : MM = rho u x u}
        \overline{\rho\uvec\otimes\uvec} = \rho \uvec \otimes \uvec \quad \text{a.e. in } \Omega_T.
    \end{equation}
    Combining the strong convergence in $\eqref{thm: Proof Main Result Part I : Strong Conv for c}$ with the weak convergence in $\eqref{thm: Proof Main Result Part I : C_w Conv for rho and rho u}$, we obtain
    \begin{equation}\label{Thm: overline rho nabla c}
        \overline{\rho\nabla c} = \rho \nabla c \quad \text{a.e. in } \OmegaT.
    \end{equation}
    As $\rho_n\geq 0$ almost everywhere on $\OmegaT$, we conclude by the weak convergence of $(\nu_{\rho_n})_{n\in\NN}$ for any $b \in C^0_0((-\infty,0))$ and any $\varphi \in L^1(\OmegaT)$ that
    \begin{align*}
        0 =\int_{\Omega_T}\varphi\, b(\rho_n)\, \dd x \, \dd t \to
        \int_{\Omega_T}\varphi \, \langle \nu,b\rangle \, \dd x \, \dd t.
    \end{align*}
    This implies for almost all $(t,x) \in \OmegaT$ that $\langle \nu_{(t,x)},b\rangle = 0$ for any test function $b \in C^0_0((-\infty,0))$ and thus $\mathrm{spt}\, \nu_{(t,x)}\subseteq [0,\infty)$.
    The relations in $\eqref{thm: Proof Main Result Part I : Relations}$ follow from Proposition~\ref{Prop : Young Measures} since ${(\rho_n)}_{n\in\NN}$ is uniformly bounded in $L^{\tgamma+\theta}(\OmegaT)$ and ${(\Peff(\rho_n))}_{n\in\NN}$ is uniformly bounded in $L^\delta(\OmegaT)$ (cf.\ Proposition~\ref{Prop: Uniform-in-n Bounds}).
    In particular, we have $\rho\geq 0$.
    By the same arguments, we conclude for the initial density sequence $(\rho^0_n)_{n\in\NN}$ that
    \begin{align*}
        \rho^0_n \weak \rho^0 \quad \text{in } L^\tgamma(\Omega)
    \end{align*}
    with $\nu^0$ and $\rho^0$ satisfying the relations in $\eqref{thm: Proof Main Result Part I : Relations Initially}$.
    For any $n \in \NN$, we have that the integral identities $\eqref{Def: Renormalized Finite Energy Weak Solutions Continuity Eq}$--$\eqref{Def: Renormalized Finite Energy Weak Solutions Parabolic Eq}$ hold for $(\rho_n,\uvec_n,c_n)$.
    The convergences $\eqref{thm: Proof Main Result Part I : C_w Conv for rho and rho u}$--$\eqref{thm: Proof Main Result Part I : Weak Conv for c and partial_t c}$
    together with the convergences that hold for the initial data by hypothesis are strong enough to pass to the limit $n \to \infty$ in these identities.
    With $\eqref{Thm: Proof Main Result Part I : m = rho u}$--$\eqref{Thm: overline rho nabla c}$, we obtain in the limit $n \to \infty$, that $(\rho,\uvec,c)$ satisfies precisely $\eqref{thm: Main Result Continuity Eq}$--$\eqref{thm: Main Result Parabolic Eq}$.
\end{proof}
By Proposition~\ref{thm: Proof Main Result Part I}, we have extracted some effective deterministic quantities $(\rho,\uvec,c)$ that satisfy the effective system $\eqref{thm: Main Result Continuity Eq}$--$\eqref{thm: Main Result Parabolic Eq}$.
However, this system is unclosed in the sense that the effective pressure $\oP$ has lost its relation to $(\rho,\uvec,c)$.
This is due to the non-linear pressure function $\Peff$ and the strong oscillations that we expect for the sequence of densities ${(\rho_n)}_{n\in\NN}$.
With the relation in $\eqref{thm: Proof Main Result Part I : Relations}$, we can close the quantity $\oP$ in terms of the effective probabilistic quantity $\nu$, for which we have not extracted some effective equation yet.
Thus, we close the system, by finding an effective equation for $\nu$.
Since $\nu$ is the weak-$*$ limit of the sequence ${(\nu_{\rho_n})}_{n\in\NN}$ and $\nu_{\rho_n}$ is defined via
\begin{equation}\label{Definition of nu rho_n}
    \langle (\nu_{\rho_n})_{(t,x)}, b\rangle = b(\rho_n(t,x)) \quad \text{for a.e. } (t,x) \in \Omega_T,
\end{equation}
for any $b \in C^1_0(\RR)$, we find such an effective equation by computing the time derivative of $\eqref{Definition of nu rho_n}$ and passing to the limit $n \to \infty$ in the resulting  equation.
The time derivative is exactly the renormalized continuity equation 
\begin{equation*}
    \partial_t b(\rho_n) + \div\bigl( b(\rho_n) \uvec_n \bigr) + \bigl(b^\prime(\rho_n)\rho_n-b(\rho_n)\bigr) \div\uvec_n = 0.
\end{equation*}
In order to pass to the limit $n\to \infty$ in this equation, we need the following compactness result that is a consequence of a weak compactness property for the so called effective viscous flux
\begin{equation*}
    \Peff(\rho) - (\lambda + 2\mu) \div\uvec,
\end{equation*}
which is known from the theory on the compressible Navier--Stokes equations with constant viscosity coefficients (see e.g.\ \cite{FN,FNP,NASI}).
More specifically, interpreting the additional term $\rho \nabla c$ in the momentum equation as a forcing term
and applying a general version of the weak compactness property for the effective viscous flux proven in \cite{NASI} (cf.\ Theorem~\ref{thm: Effective Viscous Flux}) yields the following result.
\begin{lemma}\label{Lemma : Preliminary for Main Proof}
    Under the hypotheses and notations of Proposition~\ref{thm: Proof Main Result Part I}, we have for any $b \in C^1_c(\RR)$ and any $n \in \NN$ that $b(\rho_n) \in L^\infty(0,T;L^\infty(\Omega))\cap C_{\mathrm{w}}([0,T];L^q(\Omega))$ for any $q \in [1,\infty)$.
    Moreover, we have that
    \begin{equation}\label{Lemma : Preliminary for Main Proof : Weak Convergence}
        b(\rho_n) \weakstar \overline b \quad \text{in }L^\infty(0,T;L^\infty(\Omega)),\quad b(\rho_n) \to \overline{b} \quad \text{in } C_{\mathrm{w}}([0,T];L^q(\Omega)) \quad \forall\, q \in [1,\infty)
    \end{equation}
    with 
    \begin{equation}\label{Lemma : Preliminary for Main Proof : Representation Weak Limit}
        \overline{b} (t,x) = \int_\RR b(\xi) \, \dd \nu_{(t,x)}(\xi) \quad \text{for a.e. } (t,x) \in \OmegaT,
    \end{equation}
    and in particular, we have
    \begin{equation}\label{Lemma : Preliminary for Main Proof : Strong Convergence}
        b(\rho_n) \to \overline{b} \quad \text{in } L^2(0,T;H^{-1}(\Omega)).
    \end{equation}
    Moreover, the effective viscous flux identity
    \begin{equation}\label{Lemma : Preliminary for Main Proof : Effective Viscous Flux}
    \begin{aligned}
        &\lim\limits_{n\to\infty} \int_0^T \int_\Omega 
        \varphi \biggl( \Peff(\rho_n) - (\lambda+2\mu) \div\uvec_n \biggr) b(\rho_n) \, \dd x \, \dd t\\
        &\qquad= 
        \int_0^T \int_\Omega \varphi \biggl( \oP -(\lambda+2\mu) \div \uvec \biggr) \overline{b}\, \dd x \, \dd t
    \end{aligned}
    \end{equation}
    holds for any test function $\varphi \in \mathcal{D}( \Omega_T )$.\\
    In particular, we have
    \begin{equation}\label{Lemma : Preliminary for Main Proof : Weak Convergence b(rho_n)div(u_n)}
        b(\rho_n) \div\uvec_n 
        \weak 
        \overline{b}\div\uvec 
        +
        \frac{1}{\lambda+2\mu}\Bigl( \overline{\Peff b} - \oP\, \overline{b}\Bigr)
        \quad \text{in } \mathcal{D}^\prime(\OmegaT),
    \end{equation}
    with 
    \begin{equation}\label{pw expr Peff b}
        \overline{\Peff b}(t,x) = \int_\RR \Peff(\xi)b(\xi) \, \dd  \nu_{(t,x)}(\xi) \quad \text{for a.e. } (t,x) \in \OmegaT,
    \end{equation}
    where $\oP\in L^\delta(\OmegaT)$ denotes the limit pressure in Proposition~\ref{thm: Proof Main Result Part I}.
\end{lemma}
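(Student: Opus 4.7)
\textbf{Proof proposal for Lemma~\ref{Lemma : Preliminary for Main Proof}.}
The plan is to break the statement into four blocks that can be attacked in sequence, reusing the uniform bounds from Proposition~\ref{Prop: Uniform-in-n Bounds} and the Young measure result Proposition~\ref{Prop : Young Measures}. First, since $b\in C^1_c(\RR)$ is bounded together with $b'$, the pointwise bound $|b(\rho_n)|+|\rho_n b'(\rho_n)|\le\|b\|_{C^1}(1+\|b\|_{L^\infty})$ (on the compact support) gives $b(\rho_n)\in L^\infty(\OmegaT)$ uniformly in $n$. Feeding $b$ into the renormalized continuity equation \eqref{Def: Renormalized Finite Energy Weak Solutions Renormalized Continuity Eq} and testing against $\phi\in\curlyD(\Omega)$, the bound $\|\nabla\uvec_n\|_{L^2(\OmegaT)}\le \curlyC_0$ yields equicontinuity of $t\mapsto\langle b(\rho_n)(t),\phi\rangle$; combined with the uniform $L^\infty$ bound, Lemma~\ref{Lemma : Weak Continuity in Time} from the appendix produces $\overline b\in C_{\mathrm{w}}([0,T];L^q(\Omega))$ with $b(\rho_n)\to \overline b$ in that space for every $q\in[1,\infty)$, and the weak-$*$ $L^\infty$ convergence follows from the $L^\infty$ bound. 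The representation \eqref{Lemma : Preliminary for Main Proof : Representation Weak Limit} is then immediate from Proposition~\ref{Prop : Young Measures} applied to the uniformly bounded sequence $(b(\rho_n))_{n\in\NN}$, since $b\in C^0(\RR)$.

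The second block is the strong convergence \eqref{Lemma : Preliminary for Main Proof : Strong Convergence}: pick any $q>\tfrac{6}{5}$ so that $L^q(\Omega)\hookrightarrow\hookrightarrow H^{-1}(\Omega)$ compactly. The $C_{\mathrm{w}}([0,T];L^q(\Omega))$ convergence together with this compact embedding gives, via the second part of Lemma~\ref{Lemma : Weak Continuity in Time}, strong convergence in $L^2(0,T;H^{-1}(\Omega))$.

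The third block, and the real work, is the effective viscous flux identity \eqref{Lemma : Preliminary for Main Proof : Effective Viscous Flux}. Rewriting the momentum equation as
\begin{equation*}
    \partial_t(\rho_n\uvec_n)+\div(\rho_n\uvec_n\otimes\uvec_n)+\nabla\Peff(\rho_n)-\mu\Delta\uvec_n-(\lambda+\mu)\nabla\div\uvec_n=\coup\,\rho_n\nabla c_n=:\mathbf{f}_n,
\end{equation*}
the plan is to treat $\mathbf{f}_n$ as an external force and invoke the version of the effective viscous flux compactness theorem proved in~\cite{NASI} (Theorem~\ref{thm: Effective Viscous Flux}). The hypotheses of that theorem require compactness of $\mathbf{f}_n$ in a suitable negative Sobolev space. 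This follows because $c_n\to c$ strongly in $L^2(0,T;H^1(\Omega))$ (already established in the proof of Proposition~\ref{thm: Proof Main Result Part I}), while $\rho_n\weakstar\rho$ in $L^\infty(0,T;L^\tgamma(\Omega))$ together with compactness of $L^\tgamma(\Omega)\hookrightarrow\hookrightarrow H^{-1}(\Omega)$ gives strong convergence of $\rho_n$ in $L^2(0,T;H^{-1}(\Omega))$; the product of a weakly convergent $W^{1,2}$-sequence with an $H^{-1}$-strongly convergent sequence is compact in an appropriate $W^{-1,r}(\Omega)$. Once this input is verified, Theorem~\ref{thm: Effective Viscous Flux} directly delivers \eqref{Lemma : Preliminary for Main Proof : Effective Viscous Flux}. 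This is the main obstacle: tracking the exponents so that the compactness of $\coup\rho_n\nabla c_n$ lands in the functional framework required by the Feireisl--Novotn\'y--Stra\v{s}kraba argument, which is why Lemma~\ref{lem: Parabolic Regularity} providing $\rho_n\nabla c_n\in L^2(0,T;L^{6\tgamma/(\tgamma+6)}(\Omega))$ uniformly and the strong $H^1$-convergence of $c_n$ are crucial.

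Finally, \eqref{Lemma : Preliminary for Main Proof : Weak Convergence b(rho_n)div(u_n)} follows algebraically: rearrange
\begin{equation*}
    (\lambda+2\mu)\,b(\rho_n)\div\uvec_n=\Peff(\rho_n)b(\rho_n)-\bigl[\Peff(\rho_n)-(\lambda+2\mu)\div\uvec_n\bigr]b(\rho_n),
\end{equation*}
pass to the distributional limit in each term, using Proposition~\ref{Prop : Young Measures} for $\Peff(\rho_n)b(\rho_n)\weak\overline{\Peff b}$ (valid because $\Peff b$ is continuous with $(\Peff b)(\rho_n)$ uniformly bounded in $L^\delta(\OmegaT)$ by the growth estimate \eqref{bound for Peff} and the compact support of $b$), and the already proven \eqref{Lemma : Preliminary for Main Proof : Effective Viscous Flux} for the bracketed factor. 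Solving for $b(\rho_n)\div\uvec_n$ yields the stated identity \eqref{Lemma : Preliminary for Main Proof : Weak Convergence b(rho_n)div(u_n)} together with the representation \eqref{pw expr Peff b}.
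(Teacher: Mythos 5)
Your proposal is correct in substance and follows essentially the same route as the paper: uniform $L^\infty$ bounds plus the renormalized continuity equation and the weak-continuity-in-time lemma for \eqref{Lemma : Preliminary for Main Proof : Weak Convergence}--\eqref{Lemma : Preliminary for Main Proof : Strong Convergence}, the general effective viscous flux theorem with $\coup\rho_n\nabla c_n$ treated as a force for \eqref{Lemma : Preliminary for Main Proof : Effective Viscous Flux}, and the same algebraic rearrangement for \eqref{Lemma : Preliminary for Main Proof : Weak Convergence b(rho_n)div(u_n)}. Two minor remarks. First, Theorem~\ref{thm: Effective Viscous Flux} as stated only requires \emph{weak} convergence $\mathbf{F}_n\weak\mathbf{F}$ in some $L^s(0,T;L^s(\Omega;\RR^3))$, not compactness of the force in a negative Sobolev space; the convergence $\rho_n\nabla c_n\weak\rho\nabla c$ in $L^2(0,T;L^{6\tgamma/(\tgamma+6)}(\Omega))$ (hence in $L^{3/2}(L^{3/2})$ on the bounded domain) is already available from Proposition~\ref{thm: Proof Main Result Part I}, so your compactness argument is not wrong but is more than the cited theorem demands. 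Second, you do not verify one hypothesis of Theorem~\ref{thm: Effective Viscous Flux}: that $g_n=b(\rho_n)$ solves a transport equation whose right-hand side $f_n=-\bigl(b^\prime(\rho_n)\rho_n-b(\rho_n)\bigr)\div\uvec_n$ converges weakly in $L^2(\OmegaT)$; this is routine (the prefactor is uniformly bounded and $\div\uvec_n$ is bounded in $L^2(\OmegaT)$, so a weak $L^2$ limit exists along the subsequence and is identified as $\partial_t\overline b+\div(\overline b\uvec)$), and the paper spends a short paragraph on exactly this step, so you should include it.
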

\begin{proof}
    As $\nu_{\rho_n}\weakstar \nu$ in $L^\infty(\Omega_T;\mathcal{M}(\RR))$ and $b \in C^1_c(\RR) \subseteq C^0_0(\RR)$, we have that
    \begin{equation*}
        b(\rho_n) \weakstar \overline{b} \quad \text{in } L^\infty(0,T;L^\infty(\Omega))
    \end{equation*}
    with $\overline{b}$ satisfying the relation $\eqref{Lemma : Preliminary for Main Proof : Representation Weak Limit}$ (cf.\ Proposition~\ref{Prop : Young Measures}).
    For any $n \in \NN$, we have that $(\rho_n,\uvec_n,c_n)$ satisfies the renormalized continuity equation 
    \begin{equation}\label{Lemma : Preliminary for Main Proof : Renormalized Continuity Equation}
        \partial_t b(\rho_n) + \div\bigl( b(\rho_n) \uvec_n) = -B(\rho_n) \div\uvec_n =: \mathcal{G}_n \quad \text{in } \mathcal{D}^\prime\bigl( \Omega_T \bigr),
    \end{equation}
    where $B\in C^1_c(\RR)$ is defined via $B(r) := b^\prime(r)r-b(r)$ for any $r \in \RR$.
    In particular, we have that 
    \begin{equation*}
        \frac{\dd}{\dd t} \int_\Omega b(\rho_n) \phi \, \dd x 
        = 
        \int_\Omega b(\rho_n) \uvec_n\cdot \nabla \phi \, \dd x 
        -
        \int_\Omega B(\rho_n) \div\uvec_n \phi \, \dd x
        \quad \text{in } \mathcal{D}^\prime\bigl( (0,T) \bigr)
    \end{equation*}
    for any test function $\phi \in \mathcal{D}(\Omega)$.
    For any fixed $\phi \in \mathcal{D}(\Omega)$, the right hand side of this equation lies in $L^2\bigl( (0,T) \bigr)$.
    Thus, we conclude with Lemma~\ref{Lemma : Criterion for Weak Continuity in Time}, that (after redefining $b(\rho_n)$ on a set of Lebesgue-measure zero) 
    \begin{equation*}
        b(\rho_n) \in C_{\mathrm{w}}([0,T];L^q(\Omega)) \quad \forall \, q \in [1,\infty),
    \end{equation*}
    and, that
    \begin{equation*}
        \int_\Omega \bigl( b(\rho_n)(t) - b(\rho_n)(s)\bigr) \phi\,\dd x
        =
        \int_s^t \int_\Omega 
        \bigl(
        b(\rho_n) \uvec_n \cdot \nabla \phi -B(\rho_n) \div\uvec_n  \phi 
        \bigr)
        \, \dd x \, \dd \tau
    \end{equation*}
    for any $0\leq s \leq t \leq T$.
    By using the uniform estimates in Proposition~\ref{Prop: Uniform-in-n Bounds}, we estimate the right-hand side of this equation via Hölder's inequality as
    \begin{equation*}
        \begin{aligned}
            &\biggl|\int_s^t \int_\Omega 
            \bigl(
            B(\rho_n) \div\uvec_n \, \phi - b(\rho_n) \uvec_n \cdot \nabla \phi
            \bigr)
            \, \dd x \, \dd \tau \biggr|
            \leq \curlyC_0 \sqrt{t-s} \|\phi\|_{H_0^1(\Omega)}.
        \end{aligned}
    \end{equation*}
    In particular, we have that $b(\rho_n)$ is uniformly continuous in $H^{-1}(\Omega)$. 
    As we have already verified that ${(b(\rho_n))}_{n\in\NN}$ is uniformly bounded in $L^\infty(0,T;L^\infty(\Omega))$, we conclude with Lemma~\ref{Lemma : Weak Continuity in Time}, that (after possibly redefining $\overline{b}$ on a set of Lebesgue-measure zero)
    \begin{equation*}
        b(\rho_n) \to \overline{b} \quad \text{in } C_{\mathrm{w}}([0,T];L^q(\Omega)) \quad \forall \, q \in [1,\infty)
    \end{equation*}
    and in particular, as $L^q(\Omega) \hookrightarrow\hookrightarrow H^{-1}(\Omega)$ is compact for $q >\frac{6}{5}$,
    \begin{equation}\label{strong cv bn}
        b(\rho_n) \to \overline{b} \quad \text{in } L^2(0,T;H^{-1}(\Omega)).
    \end{equation}
    From $\eqref{Lemma : Preliminary for Main Proof : Renormalized Continuity Equation}$, $\eqref{strong cv bn}$ and the fact that $\uvec_n \weak \uvec$ in $L^2(0,T;H^1_0(\Omega;\RR^3))$ (cf.\ Proposition~\ref{thm: Proof Main Result Part I}), we conclude that
    \begin{align}\label{cv fn to F distr}
        \mathcal{G}_n \weak \partial_t \overline{b} + \div(\overline{b}\uvec)=:\overline{\mathcal{G}} \quad \text{in } \mathcal{D}^\prime(\OmegaT).
    \end{align}
    By the uniform bounds from Proposition~\ref{Prop: Uniform-in-n Bounds}, we have that
    \begin{align*}
        |\langle \overline{\mathcal{G}}, \varphi \rangle_{\mathcal{D}^\prime,\, \mathcal{D}}|
        \leq 
        \limsup\limits_{n\to\infty} |\langle \mathcal{G}_n, \varphi \rangle _{\mathcal{D}^\prime,\,\mathcal{D}}|
        \leq 
        \curlyC_0 \|\varphi \|_{L^2(\OmegaT)}
    \end{align*}
    for any test function $\varphi \in \mathcal{D}(\OmegaT)$ and by duality we conclude $\overline{\mathcal{G}} \in L^2(\OmegaT)$.
    Combining the fact that $(\mathcal{G}_n)_{n\in\NN}$ is uniformly bounded in $L^2(\OmegaT)$ with the density of $\mathcal{D}(\OmegaT)$ in $L^2(\OmegaT)$, we conclude from $\eqref{cv fn to F distr}$ that
    \begin{align*}
        \mathcal{G}_n \weak \overline{\mathcal{G}} \quad \text{in } L^2(\OmegaT).
    \end{align*}
    With the convergences already proved in Proposition~\ref{thm: Proof Main Result Part I}, we can apply the general form of the effective viscous flux Lemma (cf.\ Theorem~\ref{thm: Effective Viscous Flux}) with
    \begin{equation*}
    \begin{aligned}
        &\mathbf{q}_n= (\rho_n\uvec_n), \quad \mathbf{q}=(\rho\uvec), \quad z =\frac{2\tgamma}{\tgamma+1}\in (\frac{6}{5},\infty),\\
        &p_n = \Peff(\rho_n), \quad p=\oP, \quad r=\delta \in (1,\infty),\\
        &\mathbf{F_n}=\coup\rho_n \nabla c_n, \quad \mathbf{F}= \coup \rho \nabla c, \quad s=\frac{3}{2}\in (1,\infty),\\
        &f_n=\mathcal{G}_n, \quad f=\overline{\mathcal{G}}, \quad
        g_n = b(\rho_n), \quad g=\overline{b}
    \end{aligned}
    \end{equation*}
    in order to deduce the effective viscous flux identity $\eqref{Lemma : Preliminary for Main Proof : Effective Viscous Flux}$.\\
    Let us now verify the relation $\eqref{Lemma : Preliminary for Main Proof : Weak Convergence b(rho_n)div(u_n)}$.    
    We have $\Peff b\in C^0_c(\RR)$, where $\Peff$ is extended by zero onto $(-\infty,0)$. 
    Thus, the convergence $\nu_{\rho_n} \weakstar \nu$ in $L^\infty_{\mathrm{w}^*}(\Omega_T;\mathcal{M}(\RR))$ yields
    \begin{equation*}
        \Peff(\rho_n) b(\rho_n) \weakstar \overline{\Peff b} 
        \quad \text{in } L^\infty(0,T;L^\infty(\Omega))
    \end{equation*}
    with $\overline{\Peff b}$ satisfying the relation $\eqref{pw expr Peff b}$.
    From $\eqref{strong cv bn}$ and the weak convergence $\uvec_n\weak \uvec$ in $L^2(0,T;H^1_0(\Omega;\RR^3))$ we conclude
    \begin{align*}
        b(\rho_n)\uvec_n \weak \overline{b}\uvec \quad \text{in } \mathcal{D}^\prime(\OmegaT).
    \end{align*}
    Combining these convergences with the effective viscous flux identity $\eqref{Lemma : Preliminary for Main Proof : Effective Viscous Flux}$ yields $\eqref{Lemma : Preliminary for Main Proof : Weak Convergence b(rho_n)div(u_n)}$.
\end{proof}

With Lemma~\ref{Lemma : Preliminary for Main Proof} at hand, we complete the proof of our main result Theorem~\ref{thm: Main Result}.

\begin{proof}[Proof of Theorem~\ref{thm: Main Result}]
    Let the hypotheses and notations of Proposition~\ref{thm: Proof Main Result Part I} hold true.
    We only have to show that the limit Young measure $\nu$ satisfies $\eqref{thm: Main Result Kinetic Eq}$.
    Let us fix some arbitrary test function $\psi \in C^1_c([0,T)\times\Omega\times\RR)$.
    By a density argument, we can assume without loss of generality that 
    \begin{equation*}
        \psi(t,x,\xi) = \varphi(t,x) b(\xi) \qquad \forall\, (t,x,\xi) \in \Omega_T\times\RR,
    \end{equation*}
    for some $\varphi \in C^1_c([0,T)\times\Omega)$ and $b \in C^1_c(\RR)$.
    For any $n \in \NN$, we have that $(\rho_n,\uvec_n,c_n)$ satisfies the renormalized continuity equation $\eqref{Def: Renormalized Finite Energy Weak Solutions Renormalized Continuity Eq}$. 
    Thus, we have 
    \begin{align}\label{Proof Main Result Part II : Master Eq}
        \int_0^T \int_\Omega 
        \bigl(
        b(\rho_n) \partial_t \varphi 
        +
        b(\rho_n)\uvec_n \cdot \nabla \varphi
        -
        B(\rho_n) \div\uvec_n \, \varphi
        \bigr)
        \, \dd x \, \dd t
        =
        -\int_\Omega b(\rho^0_n)\varphi(0,x) \, \dd x
    \end{align}
    for any $n \in \NN$, where $B(r):=b^\prime(r)r-b(r)$ for any $r \in \RR$.\\
    By hypothesis, we have that $\nu_{\rho^0_n}\weakstar \nu^0$ in $L^\infty_{\mathrm{w}^*}(\Omega;\mathcal{M}(\RR))$, and thus
    \begin{equation}\label{Proof Main Result Part II : Conv IC}
        -\int_\Omega \varphi(0,\cdot) b(\rho^0_n)  \, \dd x \to -\int_\Omega \varphi(0,\cdot)  \langle\nu^0,b\rangle \, \dd x. 
    \end{equation}
    From the convergences $\eqref{Lemma : Preliminary for Main Proof : Weak Convergence}, \eqref{Lemma : Preliminary for Main Proof : Weak Convergence b(rho_n)div(u_n)}$ and the relation $\eqref{Lemma : Preliminary for Main Proof : Representation Weak Limit}$ in Lemma~\ref{Lemma : Preliminary for Main Proof}, we conclude that
    \begin{equation}\label{Proof Main Result Part II : Conv partial_t b(rho)}
        \int_0^T \int_\Omega b(\rho_n) \partial_t \varphi \, \dd x \, \dd t 
        \to 
        \int_0^T \int_\Omega \langle \nu,b\rangle \partial_t \varphi \, \dd x \, \dd t
    \end{equation}
    and
    \begin{equation}\label{Proof Main Result Part II : Conv B div u}
    \begin{aligned}
        -\int_0^T \int_\Omega B(\rho_n) \div\uvec_n\, \varphi\, \dd x \, \dd t
        \to
        &-\int_0^T \int_\Omega \langle \nu,B \rangle \div\uvec\,\dd x \, \dd t\\
        &\quad + \frac{1}{\lambda+2\mu}\int_0^T \int_\Omega \Bigr( \langle \nu, B\rangle \oP - \langle \nu , B \Peff\rangle\Bigl)\varphi \, \dd x \, \dd t.
    \end{aligned}
    \end{equation}
    By definition of $\osc$ (cf.\ $\eqref{Defi Osc}$), we can rewrite the second integral on the right hand side in $\eqref{Proof Main Result Part II : Conv B div u}$ as
    \begin{equation}\label{Proof Main Result Part II : Rewriting in Q}
    \frac{1}{\lambda+2\mu}
        \int_0^T \int_\Omega \Bigr(\langle \nu, B\rangle \oP - \langle \nu , B \Peff\rangle\Bigl)\varphi \, \dd x \, \dd t
        =
        \int_0^T \int_\Omega \langle \nu , \bigl( \xi \partial_\xi b - b \bigr) \osc \rangle \varphi \, \dd x \, \dd t.
    \end{equation}
    Using the convergence $\eqref{Lemma : Preliminary for Main Proof : Strong Convergence}$ and the fact that $\uvec_n \weak \uvec$ in $L^2(0,T;H^{1}_0(\Omega;\RR^3))$, we conclude with Lemma~\ref{lem: Product Convergences} 
    \begin{equation}\label{Proof Main Result Part II : Conv b(rho) u}
        \int_0^T \int_\Omega b(\rho_n) \uvec_n \cdot \nabla \varphi\, \dd x \, \dd t
        \to
        \int_0^T \int_\Omega \langle \nu,b\rangle \uvec \cdot \nabla\varphi \, \dd x \, \dd t.
    \end{equation}
    With the convergences $\eqref{Proof Main Result Part II : Conv IC}, \eqref{Proof Main Result Part II : Conv partial_t b(rho)}, \eqref{Proof Main Result Part II : Conv B div u}, \eqref{Proof Main Result Part II : Conv b(rho) u}$ and the relation $\eqref{Proof Main Result Part II : Rewriting in Q}$, we obtain that equation $\eqref{Proof Main Result Part II : Master Eq}$ yields in the limit $n \to \infty$ 
    \begin{align*}
        \int_0^T \int_\Omega 
        \Bigl\langle 
        \nu,
        \partial_t \psi
        +
        \nabla \psi \cdot \uvec 
        -
        \bigl(\xi \partial_\xi \psi - \psi\bigr)\div\uvec
        +
        \bigl( \xi \partial_\xi \psi - \psi\bigr) \osc 
        \Bigr\rangle 
        =
        -\int_\Omega \Bigl\langle \nu^0,\psi(0,\cdot,\cdot)\Bigr\rangle \, \dd x.
    \end{align*}
    This is exactly $\eqref{thm: Main Result Kinetic Eq}$. The proof is now complete.
\end{proof}

We conclude this section with the proof of Corollary~\ref{Cor : Main Result}.
The proof relies on the following technical lemma, that follows from an integration by parts rule for the Lebesgue--Stieltjes integral for functions of bounded variations.
For a more detailed exposition on this integration by parts rule, we refer to \cite[Chapter~1]{PS}.

\begin{lemma}\label{Lemma : Auxiliary for L-S Integral}
    Let $\mu \in \mathcal{M}^+(\RR)$ be a non-negative finite Radon-measure. 
    Then the function
    \begin{equation*}
        f\colon \RR \to \RR, \qquad \xi \mapsto f(\xi):=\mu((-\infty,\xi])
    \end{equation*}
    is non-decreasing, right-continuous and satisfies
    \begin{equation}\label{Lemma : Auxiliary for L-S Integral : Properties of f} 
        \lim\limits_{s \nearrow \infty} f(s) = \|\mu\|_{\mathcal{M}(\RR)},
        \qquad
        \lim\limits_{s \searrow -\infty } f(s) = 0.
    \end{equation}
    Furthermore, we have
    \begin{equation}\label{Lemma : Auxiliary for L-S Integral : Integration by Parts I}
        \int_\RR b(\xi)\, \dd \mu (\xi) 
        =
        -\int_\RR b^\prime(\xi) f(\xi) \, \dd \xi
    \end{equation}
    for any test function $b \in C^1_c(\RR)$.
    Moreover, we have for any non-negative function $h \in C^0(\RR)$ that satisfies
    \begin{equation*}
        \int_\RR h(\xi) \, \dd \mu(\xi) < \infty
    \end{equation*}
    that
    \begin{equation}\label{Lemma : Auxiliary for L-S Integral : Integration by Parts II}
        \int_\RR b(\xi) h(\xi) \, \dd \mu (\xi)
        =
        -\int_\RR b^\prime(\xi) \biggl( \int_{(-\infty,\xi]}h(\eta)\, \dd \mu(\eta) \biggr) \, \dd \xi
    \end{equation}
    for any test function $b \in C^1_c(\RR)$.
\end{lemma}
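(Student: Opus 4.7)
The plan is to prove the three claims separately, relying only on the continuity of finite measures and Fubini's theorem.

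First, the elementary properties of $f$. Monotonicity is immediate from the monotonicity of $\mu$, since $\xi_1\le\xi_2$ implies $(-\infty,\xi_1]\subseteq(-\infty,\xi_2]$. Right-continuity follows from continuity of $\mu$ from above: if $\xi_n\searrow \xi$, the sets $(-\infty,\xi_n]$ decrease to $(-\infty,\xi]$, and since $\mu$ is finite, $f(\xi_n)\to f(\xi)$. The two limits in $\eqref{Lemma : Auxiliary for L-S Integral : Properties of f}$ follow similarly: as $s\nearrow\infty$, the sets $(-\infty,s]$ increase to $\RR$, so by continuity from below $f(s)\to\mu(\RR)=\|\mu\|_{\mathcal{M}(\RR)}$; as $s\searrow-\infty$, the sets decrease with empty intersection, so $f(s)\to 0$.

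For $\eqref{Lemma : Auxiliary for L-S Integral : Integration by Parts I}$, I would write, for $b\in C^1_c(\RR)$,
\begin{equation*}
    b(\xi)=-\int_{\xi}^{\infty} b'(s)\,\dd s,
\end{equation*}
which holds because $b$ has compact support, hence vanishes at $+\infty$. Substituting and applying Fubini (whose hypotheses are satisfied because $b'$ is bounded with compact support and $\mu$ is a finite measure), we get
\begin{equation*}
    \int_\RR b(\xi)\,\dd\mu(\xi)
    =-\int_\RR\int_\RR \mathbf{1}_{\{s>\xi\}}b'(s)\,\dd s\,\dd\mu(\xi)
    =-\int_\RR b'(s)\,\mu((-\infty,s))\,\dd s.
\end{equation*}
Since $f$ is non-decreasing it has at most countably many discontinuities, so $\mu((-\infty,s))=f(s)$ for Lebesgue-almost all $s$, and the right-hand side coincides with $-\int_\RR b'(s)f(s)\,\dd s$.

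For $\eqref{Lemma : Auxiliary for L-S Integral : Integration by Parts II}$, I would define the auxiliary non-negative Radon measure $\nu\in\mathcal{M}^+(\RR)$ by $\dd\nu:=h\,\dd\mu$, which is finite by assumption on $h$. Its cumulative distribution function is precisely
\begin{equation*}
    F(\xi):=\nu((-\infty,\xi])=\int_{(-\infty,\xi]} h(\eta)\,\dd\mu(\eta).
\end{equation*}
Applying $\eqref{Lemma : Auxiliary for L-S Integral : Integration by Parts I}$ to $\nu$ and using $\int_\RR b(\xi)\,\dd\nu(\xi)=\int_\RR b(\xi)h(\xi)\,\dd\mu(\xi)$ (which is the defining property of $\nu$) immediately yields the claimed formula.

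The main subtlety — hardly a serious obstacle, but the one point deserving care — is that when $\mu$ carries atoms, $\mu((-\infty,s))$ and $f(s)=\mu((-\infty,s])$ may differ at the (countably many) atoms; one must invoke the fact that these discrepancies form a Lebesgue-null set before identifying the two integrals against $b'(s)\,\dd s$. Everything else is a direct application of standard measure-theoretic machinery.
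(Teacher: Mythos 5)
Your proof is correct and follows essentially the same route as the paper: the paper cites standard references for the elementary properties of $f$ and for the integration-by-parts identity \eqref{Lemma : Auxiliary for L-S Integral : Integration by Parts I}, and derives \eqref{Lemma : Auxiliary for L-S Integral : Integration by Parts II} exactly as you do, by applying \eqref{Lemma : Auxiliary for L-S Integral : Integration by Parts I} to the finite measure $\dd\nu = h\,\dd\mu$. Your self-contained Fubini argument for \eqref{Lemma : Auxiliary for L-S Integral : Integration by Parts I}, including the observation that $\mu((-\infty,s))$ and $\mu((-\infty,s])$ differ only on the Lebesgue-null set of atoms, is the standard proof the paper outsources to its reference and is handled correctly.
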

\begin{proof}
The relations in $\eqref{Lemma : Auxiliary for L-S Integral : Properties of f}$ follow from elementary properties of non-negative measures (see e.g.~\cite[Chapter 1]{R}) and for the proof of relation $\eqref{Lemma : Auxiliary for L-S Integral : Integration by Parts I}$ we refer to \cite[Lemma 1.3.7]{PS}.
The relation $\eqref{Lemma : Auxiliary for L-S Integral : Integration by Parts II}$ follows from $\eqref{Lemma : Auxiliary for L-S Integral : Integration by Parts I}$ by considering the non-negative finite Radon-measure $\mu_h \in \mathcal{M}^+(\RR)$ that is defined via
\begin{align*}
    \langle \mu_h,b\rangle := \int_\RR b(\xi) h(\xi) \, \dd \mu(\xi) \quad \forall\, b \in  C^0_0(\RR).
\end{align*}
\end{proof}
With the relations from Lemma~\ref{Lemma : Auxiliary for L-S Integral} we conclude Corollary~\ref{Cor : Main Result} from Theorem~\ref{thm: Main Result}.
The proof relies on an appropriate choice of test functions and follows the technique demonstrated in \cite{PS}.
\begin{proof}[Proof of Corollary~\ref{Cor : Main Result}]
    For almost all $(t,x)\in \Omega_T$, we have that
    \begin{equation}\label{Cor : Main Result : spt nu}
        \nu_{(t,x)}\in \mathcal{P}(\RR), \quad \mathrm{spt}\,\nu_{(t,x)}\subseteq [0,\infty),
    \end{equation}
    and thus, by Lemma~\ref{Lemma : Auxiliary for L-S Integral}, $\xi \mapsto f(t,x,\xi)$ is non-decreasing, right-continuous and satisfies
    \begin{equation}\label{Cor : Main Result : CDF f}
        f(t,x,\xi) = 0\quad \forall\, \xi \in (-\infty,0), \qquad 
                \lim\limits_{\xi \nearrow \infty} f(t,x,\xi)=1.
    \end{equation}
    By definition of $f$, we have for almost all $(t,x) \in \Omega_T$ that the Lebesgue--Stieltjes measure corresponding to $f(t,x,\cdot)$ is exactly given by $\nu_{(t,x)}$ and therefore we have
    \begin{equation}\label{Cor : Main Result : Integration Relation}
        \int_\RR b(\xi)\, \dd f(t,x,\xi) = \int_\RR b(\xi) \, \dd \nu_{(t,x)}(\xi)
    \end{equation}
    for any function $b$ that is integrable with respect to $\nu_{(t,x)}$.
    Also, we have that $\oP \in L^\delta(\Omega_T)$ and thus
    \begin{equation*}
       \int_\RR \Peff(\xi) \, \dd \nu_{(t,x)}(\xi) < \infty
    \end{equation*}
    for almost all $(t,x) \in \Omega_T$.
    Thus, by Lemma~\ref{Lemma : Auxiliary for L-S Integral} and $\eqref{Cor : Main Result : Integration Relation}$, we have for almost all $(t,x) \in \Omega_T$ the integration by parts rules
    \begin{align}
        &\int_\RR b(\xi) \, \dd \nu_{(t,x)}(\xi) = - \int_\RR \partial_\xi b(\xi) f(t,x,\eta)\, \dd \xi,\label{Cor : Main Result : Integration by Parts I} \\
        &\int_\RR b(\xi) \Peff(\xi) \, \dd \nu_{(t,x)}(\xi)
        =
        -\int_\RR \partial_\xi b(\xi) \biggl( \int_{(-\infty,\xi]} \Peff(\eta) \, \dd f(t,x,\eta) \biggr)\, \dd \xi,\label{Cor : Main Result : Integration by Parts II}
    \end{align}
    for any test function $b \in C^1_c(\RR)$.
    Let us fix some test function $\psi \in C^1_c([0,T)\times\Omega\times\RR)$.
    By a density argument, we can assume without loss of generality that $\psi = \varphi  b$ for some $\varphi \in C^1_c([0,T)\times\Omega)$, $b \in C^1_c(\RR)$.
    Let us define a function $B$ via
    \begin{equation*}
        B\colon \RR \to \RR, \qquad 
        \xi \mapsto B(\xi):=\int_{(\xi,\infty)} b(\eta)\, \dd \eta.
    \end{equation*}
    Then we have
    \begin{align}\label{Cor : Main Result : relations for B}
        &\partial_\xi B(\xi) = -b(\xi), \quad 
        \partial_{\xi\xi}B(\xi) = - \partial_\xi b(\xi), \quad 
        \partial_\xi \bigl( \xi \partial_\xi B(\xi)  - B(\xi) \bigr) = - \xi \partial_\xi b(\xi)
    \end{align}
    for any $\xi \in \RR$.
    With the second relation in $\eqref{Cor : Main Result : spt nu}$ and the first relation in $\eqref{Cor : Main Result : CDF f}$, we conclude by a truncation argument that $\eqref{Cor : Main Result : Integration by Parts I}$ and $\eqref{Cor : Main Result : Integration by Parts II}$ hold also for $B$.
    Thus, we have
    \begin{align}
        &\int_\RR B(\xi) \, \dd \nu_{(t,x)}(\xi) 
        =
        \int_\RR b(\xi) f(t,x,\xi) \, \dd \xi,\label{Cor : Main Result : Integration by Parts III}\\
        &\int_\RR \bigl(\xi \partial_\xi B(\xi) - B(\xi) \bigr) \, \dd \nu_{(t,x)}(\xi) = \int_\RR \xi \partial_\xi b(\xi) f(t,x,\xi)\, \dd \xi,\label{Cor : Main Result : Integration by Parts IV}\\
        &\int_\RR \bigl(\xi \partial_\xi B(\xi) - B(\xi) \bigr) \Peff(\xi) \, \dd \nu_{(t,x)}(\xi) = \int_\RR \xi \partial_\xi b(\xi)\biggl( \int_{(-\infty,\xi]}\Peff(\eta)\, \dd f(t,x,\eta) \biggr)\, \dd \xi\label{Cor : Main Result : Integration by Parts V}
    \end{align}
    for almost all $(t,x) \in \Omega_T$, where we have used $\eqref{Cor : Main Result : relations for B}$.
    Corresponding relations hold also for $f^0$.
    With the second relation in $\eqref{Cor : Main Result : spt nu}$ and the first relation in $\eqref{Cor : Main Result : CDF f}$, we have after a truncation argument that the function $\varphi B$ is a valid test function for the kinetic equation $\eqref{thm: Main Result Kinetic Eq}$.
    We obtain
    \begin{equation}\label{Cor : Main Result : Equation to Calculate}
        \int_0^T \int_\Omega 
        \Bigl\langle \nu,
        B\partial_t \varphi 
        + 
        B \nabla \varphi\cdot \uvec 
        -
        \bigl( \xi\partial_\xi B - B\bigr)\varphi\div\uvec
        +
        \bigl(\xi\partial_\xi B - B\bigr) \varphi \osc 
        \Bigr\rangle 
        \, \dd x \, \dd t
        =
        -\int_\Omega 
        \Bigl\langle \nu^0, \varphi(0,\cdot)B
        \Bigr\rangle \, \dd x.
    \end{equation}
    With the relations $\eqref{Cor : Main Result : Integration by Parts III}$ and $\eqref{Cor : Main Result : Integration by Parts IV}$, we infer
    \begin{align*}
        &\int_0^T \int_\Omega \Bigl\langle \nu, B \partial_t\varphi\Bigr\rangle \, \dd x \, \dd t
        =
        \int_0^T \int_\Omega \int_\RR b(\xi)f(t,x,\xi) \partial_t \varphi(t,x) \, \dd \xi\, \dd x \, \dd t,
        \\
        &\int_0^T \int_\Omega \Bigl\langle \nu, B \nabla\varphi \cdot \uvec\Bigr\rangle \, \dd x \, \dd t
        =
        \int_0^T \int_\Omega \int_\RR b(\xi) f(t,x,\xi) \nabla\varphi(t,x) \cdot \uvec (t,x) \, \dd \xi\,\dd x\, \dd t,\\
        &\int_\Omega \Bigl\langle \nu^0, B \varphi(0,\cdot) \Bigr\rangle \, \dd x
        =
        \int_\Omega \int_\RR b(\xi) f^0(x,\xi) \varphi(0,x)\, \dd \xi \, \dd x,
    \end{align*}
    and
    \begin{align*}
        \int_0^T \int_\Omega \Bigl\langle \nu, \bigl(\xi\partial_\xi B -B \bigr) \varphi \div\uvec\Bigr\rangle \, \dd x \, \dd t
        =
        \int_0^T \int_\Omega \int_\RR \xi \partial_\xi b(\xi) f(t,x,\xi) \varphi(t,x)\div\uvec(t,x) \,\dd \xi\, \dd x \, \dd t.
    \end{align*}
    From the relations $\eqref{Cor : Main Result : Integration by Parts IV}$ and $\eqref{Cor : Main Result : Integration by Parts V}$ we infer
    \begin{align*}
        &\int_0^T \int_\Omega 
        \Bigl\langle \nu, \bigl( \xi \partial_\xi B - \xi \bigr) \varphi \osc \Bigr\rangle \, \dd x \, \dd t\\
        &=
        \frac{1}{\lambda + 2 \mu}
        \Biggl(
        \int_0^T \int_\Omega \Bigl\langle \nu, \bigl( \xi \partial_\xi B - \xi \bigr) \oP \varphi \Bigr\rangle \, \dd x \, \dd t  
        -
        \int_0^T \int_\Omega \Bigl\langle \nu, \bigl( \xi \partial_\xi B - \xi \bigr)\Peff \varphi \Bigr\rangle\, \dd x \, \dd t
        \Biggr)\\
        &=
        \frac{1}{\lambda + 2 \mu} 
        \int_0^T \int_\Omega \int_\RR \xi \partial_\xi b(\xi) \varphi(t,x) \Biggl( \oP(t,x) f(t,x,\xi) 
        -
         \int_{(-\infty,\xi]}\Peff(\eta)\, \dd f(t,x,\eta) \Biggr) 
        \, \dd \xi \, \dd x \, \dd t\\
        &=
        \int_0^T \int_\Omega \int_\RR \xi \partial_\xi b(\xi) \varphi(t,x) \Biggl( \int_{(-\infty,\xi]}
        \frac{\oP(t,x) - \Peff(\eta)}{\lambda+2\mu}
        \, \dd f(t,x,\eta) \Biggr) 
        \, \dd \xi \, \dd x \, \dd t\\
        &= 
        \int_0^T \int_\Omega \int_\RR \xi \partial_\xi b(\xi) \varphi(t,x) 
        \mathcal{M}[f](t,x,\xi) \, \dd \xi\, \dd x \, \dd t.
    \end{align*}
    Using these identities in $\eqref{Cor : Main Result : Equation to Calculate}$ we obtain $\eqref{Cor : Main Result : Eq on f}$.
\end{proof}

\section{Conclusions}\label{Sec : Conclusions}

We were concerned with the rigorous justification of an effective system for an isothermal compressible liquid-vapor flow in the regime where the number of phase boundaries is very large.
As a model on the detailed scale we have chosen the \system equations with a pressure function of Van-der-Waals type that can be seen as a lower-order approximation of the NSK equations.
For this system, we have proven a homogenization result that justifies an effective system in the regime where the initial density is highly oscillating.
This effective system describes the two-phase fluid on an averaged scale through two deterministic quantities in the form of the fluid's velocity and the order parameter and through a probabilistic quantity in the form of a Young measure.
Accordingly, the effective system of equations consists of a hydrodynamic part and a kinetic equation for the Young measure.
The kinetic equation coincides with the one in \cite{HB_NOTE}. However, in \cite{HB_NOTE}, the authors were able to rigorously reduce the effective system to a Baer--Nunziato type system by assuming that the Young measure is given initially by the convex combination of two Dirac-measures and by working in the stronger Hoff--Desjardin regularity class.
It would be very interesting whether such a reduction is also possible for the \system system with a pressure function of Van-der-Waals type. 
Also, by rewriting the kinetic equation for the Young measure into a kinetic equation for its corresponding cumulative distribution function we observed that the effective system is accessible by standard approximation methods.
It would be interesting to investigate the effective system by such methods without reducing it to a Baer--Nunziato type system.
\par
Our justification relies on the assumption that a finite energy weak solution to the initial-boundary value problem $\eqref{NSK_parabolic}$--$\eqref{NSK Parabolic Boundary Conditions}$ emanating from initial data that satisfy $\eqref{defi:renormalized:IC}$ exists globally in time.
Providing a rigorous proof of such a global-in-time existence result would complement our results and is thus planned as forthcoming work. 
In this contribution, we considered the isothermal case. 
A parabolic relaxation formulation for the non-isothermal NSK equations was recently proposed in \cite{KMR}.
It would be very nice to obtain a similar homogenization result for the non-isothermal case.
Our results rely on a parabolic relaxation formulation, however, there are other possibilities to approximate the NSK equations. 
In \cite{Rohde_Convolutional}, a non-local NSK model was proposed that models the capillarity term by a convolutional operator with an interaction potential.
For this model, a rigorous homogenization result for a porous domain was recently obtained in \cite{RLvW}.
In \cite{Rohde_Elliptic}, an elliptic relaxation formulation was proposed that corresponds to the \system equations with $\paracoup=0$. 
It would be very interesting to generalize our results to these NSK models and, also, to obtain a homogenization result for the parabolic NSK equations in a porous domain analogous to the one in \cite{RLvW}.
\appendix
\section{Compactness Results and Weak Continuity in Time}\label{Sec : Compactness Results and Weak Continuity in Time}
In this section we collect well-known compactness results for Bochner spaces and weakly continuous functions.
Also, we state a general form of the weak compactness result concerning the effective viscous flux for the compressible Navier--Stokes equations, that was stated and proven in \cite{NASI} in a even more general form.
These results are used throughout this work and therefore we include them here for the sake of clarity.
We start with the classical Aubin--Lions lemma.
\begin{lemma}\label{thm: Aubin Lions}
    Let $X_0,X,X_1$ be three Banach spaces satisfying the embeddings
    \begin{equation*}
        X_0 \hookrightarrow \hookrightarrow X \hookrightarrow X_1,
    \end{equation*}
    where the first embedding $X_0\hookrightarrow\hookrightarrow X$ is compact.\\
    For $p,q\in [1,\infty]$, define the space
    \begin{equation*}
        W_{p,q}:=\Bigl\{  u\in L^p(0,T;X_0) \mid \partial_t u \in L^q(0,T;X_1)  \Bigr\}
    \end{equation*}
    and a norm on $W_{p,q}$ via
    \begin{align*}
        \|u\|_{W_{p,q}} := \|u\|_{L^p(0,T;X_0)} + \|\partial_t u\|_{L^q(0,T;X_1)} 
        \quad \forall \,  u \in W_{p,q}.
    \end{align*}
    Then, we have that $\Bigl(W_{p,q},\|\cdot\|_{W_{p,q}}  \Bigr)$ is a Banach space and the embedding
    \begin{equation*}
        W_{p,q} \hookrightarrow\hookrightarrow L^p(0,T;X) 
    \end{equation*}
    is compact if $p \in [1,\infty)$.
    Moreover, the embedding
    \begin{equation*}
        W_{p,q} \hookrightarrow\hookrightarrow C([0,T];X)
    \end{equation*}
    is compact if $p = \infty$ and $q \in (1,\infty]$.
\end{lemma}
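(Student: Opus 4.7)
The plan is to follow the classical proof of the Aubin--Lions lemma by combining the Ehrling interpolation inequality with a uniform time-modulus-of-continuity argument derived from the bound on the time derivative. Before the main compactness argument, I would verify that $W_{p,q}$ is a Banach space. This is standard: completeness of the Bochner spaces $L^p(0,T;X_0)$ and $L^q(0,T;X_1)$ together with the closedness of the distributional time derivative yield the result directly.

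The crucial analytic ingredient is the Ehrling (or Lions) inequality: for every $\varepsilon>0$ there exists $C_\varepsilon>0$ such that
\begin{equation*}
    \|v\|_X \leq \varepsilon \|v\|_{X_0} + C_\varepsilon \|v\|_{X_1} \qquad \forall\, v \in X_0.
\end{equation*}
This is proven by contradiction: if it failed, one would produce a sequence in the unit sphere of $X_0$ which, by the compact embedding $X_0\hookrightarrow\hookrightarrow X$, has a subsequence convergent in $X$, while being forced to converge to zero in $X_1$, contradicting the lower bound. Given a bounded sequence $(u_n)_{n\in\NN}\subseteq W_{p,q}$, applying this inequality pointwise in $t$, raising to the $p$-th power, integrating, and absorbing the $X_0$-term via the uniform bound in $L^p(0,T;X_0)$, I reduce the problem to showing that $(u_n)$ admits a subsequence converging strongly in $L^p(0,T;X_1)$.

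To establish this reduced compactness statement for $p\in[1,\infty)$, I would use the Fr\'echet--Kolmogorov characterization of precompactness in $L^p(0,T;X_1)$. The key estimate is the uniform modulus of continuity in time: for $0\leq t < t+h \leq T$,
\begin{equation*}
    \|u_n(t+h)-u_n(t)\|_{X_1}
    \leq \int_t^{t+h} \|\partial_\tau u_n(\tau)\|_{X_1}\,\dd\tau
    \leq h^{1/q'} \|\partial_t u_n\|_{L^q(0,T;X_1)},
\end{equation*}
(with the usual absolute-continuity argument when $q=1$), combined with the pointwise precompactness of $\{u_n(t)\}$ in $X_1$ obtained from the bound in $L^p(0,T;X_0)$ together with the continuous embedding $X_0\hookrightarrow X\hookrightarrow X_1$ and a Lebesgue-point argument. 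The Fr\'echet--Kolmogorov criterion then yields the desired subsequence.

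The main obstacle is the endpoint case $p=\infty$, $q\in(1,\infty]$, where the target space $C([0,T];X)$ is stronger than $L^p(0,T;X)$. The previous time-increment bound now gives uniform equicontinuity of $(u_n)$ in the $X_1$ norm, and the uniform $L^\infty(0,T;X_0)$ bound provides pointwise precompactness in $X_1$ (using $X_0\hookrightarrow\hookrightarrow X\hookrightarrow X_1$). An Arzel\`a--Ascoli argument then extracts a subsequence converging in $C([0,T];X_1)$. To upgrade this convergence to $C([0,T];X)$, I would apply the Ehrling inequality uniformly in $t$: given $\varepsilon>0$,
\begin{equation*}
    \sup_{t\in[0,T]}\|u_n(t)-u_m(t)\|_X
    \leq \varepsilon\bigl(\|u_n\|_{L^\infty(0,T;X_0)}+\|u_m\|_{L^\infty(0,T;X_0)}\bigr)
    + C_\varepsilon \sup_{t\in[0,T]}\|u_n(t)-u_m(t)\|_{X_1},
\end{equation*}
where the first term is controlled uniformly in $n,m$ and the second vanishes along the extracted subsequence. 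A diagonal argument completes the proof. The only subtle point is the exclusion of $q=1$ in the endpoint case, which is genuine: without better than $L^1$ integrability of $\partial_t u_n$, one does not obtain equicontinuity and hence cannot invoke Arzel\`a--Ascoli.
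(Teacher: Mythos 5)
Your argument is correct, but it is worth noting that the paper does not actually prove this lemma: its ``proof'' is a one-line citation to Simon's compactness paper (Corollary~4 of \cite{S}), whose hypotheses exactly match the statement (derivative bounded in $L^1$ suffices for $p<\infty$; $q>1$ is needed for the $C([0,T];X)$ endpoint). What you supply instead is the self-contained classical argument --- Ehrling's interpolation inequality to reduce compactness in $L^p(0,T;X)$ (resp.\ $C([0,T];X)$) to compactness in the weak space $X_1$, then Fr\'echet--Kolmogorov for $p<\infty$ and Arzel\`a--Ascoli for $p=\infty$ --- which is essentially the mechanism behind Simon's result specialized to this corollary, so the two routes are mathematically the same proof at different levels of delegation. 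Two small technical points you should tighten if writing this out in full: (i) for $q=1$ and $p\in[1,\infty)$ the pointwise increment bound gives $h^{1/q'}=1$ and hence no modulus; the uniform translation estimate must instead be obtained in integrated form via Fubini, $\int_0^{T-h}\|u_n(t+h)-u_n(t)\|_{X_1}\,\dd t\le h\,\|\partial_t u_n\|_{L^1(0,T;X_1)}$, combined with the $L^p(0,T;X_0)$ bound to pass to the $L^p$ translation estimate; (ii) in the endpoint case the $L^\infty(0,T;X_0)$ bound controls $\|u_n(t)\|_{X_0}$ only for a.e.\ $t$, so pointwise relative compactness in $X_1$ is available on a dense set of times and must be combined with the $X_1$-equicontinuity before invoking Arzel\`a--Ascoli, and likewise the Ehrling estimate on $u_n(t)-u_m(t)$ holds a.e.\ in $t$ and is transferred to all $t$ by continuity in $X_1$. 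Your closing remark that $q=1$ is genuinely excluded from the $C([0,T];X)$ case is accurate and consistent with the cited result.
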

\begin{proof}
    See e.g.~\cite[Corollary 4]{S}.
\end{proof}

Another compactness result can be established for weakly continuous functions.
Let us start with the following standard result that ensures the weak continuity of a quantity.

\begin{lemma}\label{Lemma : Criterion for Weak Continuity in Time}
    Let $d \in \NN$ and $Q \subseteq \RR^d$ a bounded domain.
    Let $p \in (1,\infty)$ and suppose that $f \in L^\infty(0,T;L^p(Q))$ satisfies for any $\phi \in \mathcal{D}(\Omega)$
    \begin{equation*}
        \frac{\dd}{\dd t} \int_Q f\phi\,\dd x = F_\phi \quad \text{in } \mathcal{D}^\prime((0,T))
    \end{equation*}
    for some integrable function $F_\phi \in L^1(0,T)$.\\
    Then, there exists a weakly continuous function $\hat{f} \in C_{\mathrm{w}}([0,T];L^p(Q))$ such that
    \begin{equation*}
        \hat{f}(t) = f(t) \quad \text{in } L^p(Q) \quad \text{for almost all } t \in (0,T).
    \end{equation*}
    Moreover, we have for any $0\leq s \leq t\leq T$ the relation
    \begin{equation*}
        \int_Q \hat{f}(t)\phi\, \dd x = \int_Q \hat{f}(s) \phi \, \dd x
        +
        \int_s^t F_\phi(\tau) \, \dd \tau
    \end{equation*}
    for any test function $\phi \in \mathcal{D}(Q)$.
\end{lemma}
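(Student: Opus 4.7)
The plan is to construct $\hat{f}(t) \in L^p(Q)$ pointwise in $t$ by identifying it as the Riesz representative of a suitable linear functional on $L^{p'}(Q)$, where $p'$ denotes the Hölder conjugate of $p$. The starting observation is that for each fixed test function $\phi \in \mathcal{D}(Q)$, the scalar function $g_\phi(t) := \int_Q f(t)\phi \, \dd x$ lies in $L^\infty(0,T)$ with distributional derivative $F_\phi \in L^1(0,T)$. Therefore $g_\phi$ belongs to $W^{1,1}(0,T)$, and by the standard embedding $W^{1,1}(0,T) \hookrightarrow C([0,T])$ it admits an absolutely continuous representative $\tilde{g}_\phi$ on $[0,T]$. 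This immediately gives the integral identity $\tilde{g}_\phi(t) - \tilde{g}_\phi(s) = \int_s^t F_\phi(\tau)\,\dd\tau$ by the fundamental theorem of calculus for absolutely continuous functions.

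The next step is to trade the ``for each $\phi$'' dependence of the exceptional set for a single null set. Since $\mathcal{D}(Q)$ is separable in the $L^{p'}(Q)$ topology, I pick a countable dense family $\{\phi_k\}_{k\in\NN} \subseteq \mathcal{D}(Q)$. For every $k$ there is a Lebesgue null set $N_k \subseteq (0,T)$ off which $\tilde{g}_{\phi_k}(t) = \int_Q f(t)\phi_k\,\dd x$. Setting $N := \bigcup_k N_k$ yields a single null set such that for $t \in [0,T]\setminus N$ and every $k$, the identity $\tilde{g}_{\phi_k}(t) = \int_Q f(t)\phi_k\,\dd x$ holds together with the uniform bound $|\tilde{g}_{\phi_k}(t)| \leq \|f\|_{L^\infty(0,T;L^p(Q))}\|\phi_k\|_{L^{p'}(Q)}$. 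By continuity of $\tilde{g}_{\phi_k}$ on $[0,T]$, this bound then extends to all $t \in [0,T]$.

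For each $t \in [0,T]$ I then define a linear functional $\Lambda_t$ on $\mathrm{span}\{\phi_k\}$ by $\Lambda_t(\phi_k) := \tilde{g}_{\phi_k}(t)$; the above uniform bound makes $\Lambda_t$ continuous in the $L^{p'}(Q)$ norm on this dense subspace, so it extends uniquely to a continuous linear functional on $L^{p'}(Q)$. By the Riesz representation theorem, there is a unique $\hat{f}(t) \in L^p(Q)$ with $\int_Q \hat{f}(t)\phi\,\dd x = \Lambda_t(\phi)$ for all $\phi \in L^{p'}(Q)$. For each $\phi_k$, the function $t \mapsto \int_Q \hat{f}(t)\phi_k\,\dd x = \tilde{g}_{\phi_k}(t)$ is continuous on $[0,T]$; combined with the uniform bound $\sup_{t \in [0,T]}\|\hat{f}(t)\|_{L^p(Q)} \leq \|f\|_{L^\infty(0,T;L^p(Q))}$ and the density of $\{\phi_k\}$ in $L^{p'}(Q)$, a standard $3\varepsilon$ argument promotes this to continuity of $t \mapsto \int_Q \hat{f}(t)\phi\,\dd x$ for every $\phi \in L^{p'}(Q)$, yielding $\hat{f} \in C_{\mathrm{w}}([0,T];L^p(Q))$.

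Finally, $\hat{f}(t) = f(t)$ in $L^p(Q)$ for $t \in [0,T]\setminus N$ because they agree as functionals on the dense set $\{\phi_k\}$, and the claimed integral identity for any $\phi \in \mathcal{D}(Q)$ follows by first verifying it on the dense family and then passing to the limit using the $L^1$-bound on $F_\phi$ together with continuous dependence in $\phi$. The main obstacle in this proof is the coordination of null sets across uncountably many test functions: the separability trick above is what allows a single representative $\hat{f}(t)$ to be defined for every $t$ rather than merely almost every $t$, and the a priori uniform bound in $L^\infty(0,T;L^p(Q))$ is essential for extending the pointwise-in-$t$ functional from $\mathcal{D}(Q)$ to the whole of $L^{p'}(Q)$.
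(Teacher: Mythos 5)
Your argument is correct and essentially complete; note that the paper itself gives no proof of this lemma, only a pointer to Novotn\'y--Stra\v{s}kraba, Chapter 6, and your construction (absolutely continuous representatives $\tilde g_\phi$ of $t\mapsto\int_Q f(t)\phi\,\dd x$, a single null set via a countable dense family $\{\phi_k\}\subseteq\mathcal{D}(Q)$ in $L^{p'}(Q)$, Riesz representation in $L^p(Q)=(L^{p'}(Q))^*$, and a $3\varepsilon$ density argument for weak continuity) is precisely the standard one found there. Two small points deserve tightening. First, defining $\Lambda_t$ by prescribing its values on the $\phi_k$ only yields a well-defined linear functional on $\mathrm{span}\{\phi_k\}$ once you check consistency, i.e.\ that $\phi\mapsto\tilde g_\phi(t)$ is linear for \emph{every} $t\in[0,T]$; this follows because $\tilde g_{a\phi+b\psi}$ and $a\tilde g_\phi+b\tilde g_\psi$ are continuous and agree on the full-measure set $[0,T]\setminus N$, but it should be said (also remember to enlarge $N$ by the null set where $\|f(t)\|_{L^p(Q)}$ exceeds its essential supremum, as your uniform bound implicitly requires). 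Second, your closing step for a general $\phi\in\mathcal{D}(Q)$ via ``passing to the limit using the $L^1$-bound on $F_\phi$'' is the wrong route: nothing in the hypotheses gives continuity of $\phi\mapsto F_\phi$ from $L^{p'}(Q)$ into $L^1(0,T)$, so approximating $\phi$ by the $\phi_k$ does not obviously control $\int_s^t F_{\phi_k}$. The correct and shorter argument is direct: for the given $\phi$, both $\tilde g_\phi$ and $t\mapsto\int_Q\hat f(t)\phi\,\dd x$ are continuous on $[0,T]$ and coincide for a.e.\ $t$ (since $\hat f(t)=f(t)$ a.e.), hence coincide everywhere, and the identity then follows from the fundamental theorem of calculus for $\tilde g_\phi$ that you already established in your first step.
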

\begin{proof}
    See e.g.~\cite[Chapter 6]{NASI}.
\end{proof}
The following compactness result is a consequence of the Sobolev embedding theorem and an abstract version of the Arzéla--Ascoli theorem. 
For a detailed exposition of the following compactness result we refer to \cite[Chapter~6]{NASI}.
\begin{lemma}\label{Lemma : Weak Continuity in Time}
Let $T>0$, $d \in \NN$ with $d \geq 2$ and assume that $Q \subseteq \RR^d$ is a bounded domain with Lipschitz boundary. 
Let $p \in (1,\infty)$ and assume that functions $g_n \in C_{\mathrm{w}}([0,T];L^p(Q))$ are given for $n \in \NN$.\\
Then we have the following statements:
\begin{enumerate}
    \item If the sequence ${(g_n)}_{n\in\NN}$ is uniformly bounded in $L^p(Q)$, i.e.,
    \begin{equation*}
        \sup\limits_{n\in\NN}\sup\limits_{t \in [0,T]} \|g_n(t)\|_{L^p(Q)} \leq C
    \end{equation*}
    for some positive constant $C>0$ that does not depend on $n$, and uniformly continuous in $W^{-1,q}(Q)$ for some $q \in (1,\infty)$, i.e.,
    \begin{equation*}
        \forall \varepsilon >0: \exists \delta(\varepsilon)>0:
        \Bigl( t,s \in [0,T], \, |t-s|< \delta \Longrightarrow \| g_n(t)-g_n(s)\|_{W^{-1,q}(Q)} < \varepsilon \quad \forall \, n \in \NN \Bigr),
    \end{equation*}
    then there exists some $g \in C_{\mathrm{w}}([0,T];L^p(Q))$ such that, after passing to a non-relabeled subsequence,
    \begin{equation*}
        g_n \to g \quad\text{in } C_{\mathrm{w}}([0,T];L^p(Q)).
    \end{equation*}
    \item If there exists some $g \in C_{\mathrm{w}}([0,T];L^p(Q))$ such that
    \begin{equation*}
        g_n \to g \quad \text{in } C_{\mathrm{w}}([0,T];L^p(Q)),
    \end{equation*}
    then we have
    \begin{equation*}
        g_n \to g \quad \text{strongly in } L^r(0,T;W^{-1,s}(Q))
    \end{equation*}
    for any $r \in [1,\infty)$ and for any $s \in (1,\infty)$ such that the embedding $L^p(Q) \hookrightarrow\hookrightarrow W^{-1,s}(Q)$ is compact.
\end{enumerate}
\end{lemma}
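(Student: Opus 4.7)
The plan is to treat the two parts of the lemma in order, reducing part (1) to an abstract Arzelà--Ascoli argument in the Banach space $W^{-1,q}(Q)$ combined with the reflexivity of $L^p(Q)$, and then deducing part (2) from part (1) by a dominated convergence argument based on the compact embedding $L^p(Q)\hookrightarrow\hookrightarrow W^{-1,s}(Q)$.

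For part (1), I would first observe that $L^p(Q)\hookrightarrow W^{-1,q}(Q)$ is a continuous embedding for the $q$ under consideration (by Sobolev embedding applied to the dual pairing with $W^{1,q'}_0(Q)$), so the hypotheses give a sequence of equicontinuous maps $\bigl[t\mapsto g_n(t)\bigr]\colon [0,T]\to W^{-1,q}(Q)$ with values in the uniformly bounded subset $\{v\in L^p(Q):\|v\|_{L^p(Q)}\le C\}$. Since $L^p(Q)$ is reflexive, this subset is weakly sequentially compact, hence, by compactness of its image in $W^{-1,q}(Q)$, it is relatively compact in the $W^{-1,q}(Q)$-norm. Fix a countable dense set $\{t_j\}_{j\in\mathbb{N}}\subseteq [0,T]$ and extract, by a diagonal procedure, a subsequence (not relabeled) such that $g_n(t_j)$ converges in $W^{-1,q}(Q)$ for every $j$; by reflexivity and the uniform $L^p$-bound we may also assume $g_n(t_j)\weak g(t_j)$ in $L^p(Q)$ for each $j$, with $\|g(t_j)\|_{L^p(Q)}\le C$. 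The assumed equicontinuity in $W^{-1,q}(Q)$ then propagates convergence to every $t\in[0,T]$: the net $\{g_n(t)\}$ is Cauchy in $W^{-1,q}(Q)$, so it has a limit $g(t)\in W^{-1,q}(Q)$, and a standard $3\varepsilon$-argument yields
\begin{equation*}
\sup_{t\in[0,T]}\|g_n(t)-g(t)\|_{W^{-1,q}(Q)}\xrightarrow{n\to\infty}0.
\end{equation*}
A weak lower semicontinuity/Banach--Alaoglu argument shows that in fact $g(t)\in L^p(Q)$ with $\|g(t)\|_{L^p(Q)}\le C$ for every $t$, and that $g_n(t)\weak g(t)$ in $L^p(Q)$ for every $t$. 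The last step is to upgrade this to convergence in $C_{\mathrm{w}}([0,T];L^p(Q))$: given $\phi\in L^{p'}(Q)$ and $\varepsilon>0$, approximate $\phi$ in $L^{p'}(Q)$ by some $\phi_\varepsilon\in W^{1,q'}_0(Q)$ (the latter is dense in the former by Sobolev embedding for appropriate parameters), bound the residue using the uniform $L^p(Q)$-estimate, and bound the main term by the uniform $W^{-1,q}(Q)$-convergence tested against $\phi_\varepsilon$. This yields
\begin{equation*}
\sup_{t\in[0,T]}\biggl|\int_Q\bigl(g_n(t)-g(t)\bigr)\phi\,\dd x\biggr|\xrightarrow{n\to\infty}0,
\end{equation*}
which is precisely convergence in $C_{\mathrm{w}}([0,T];L^p(Q))$; in particular $g\in C_{\mathrm{w}}([0,T];L^p(Q))$ by testing with a fixed $\phi\in L^{p'}(Q)$.

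For part (2), convergence in $C_{\mathrm{w}}([0,T];L^p(Q))$ yields $g_n(t)\weak g(t)$ in $L^p(Q)$ for every $t\in[0,T]$, together with a uniform bound $\sup_{n,t}\|g_n(t)\|_{L^p(Q)}<\infty$. If $L^p(Q)\hookrightarrow\hookrightarrow W^{-1,s}(Q)$ is compact, then weak convergence in $L^p(Q)$ implies strong convergence in $W^{-1,s}(Q)$, so $g_n(t)\to g(t)$ in $W^{-1,s}(Q)$ for each $t\in[0,T]$. Combined with the pointwise-in-$t$ uniform bound $\|g_n(t)-g(t)\|_{W^{-1,s}(Q)}\le K$, Lebesgue's dominated convergence theorem applied to $t\mapsto\|g_n(t)-g(t)\|_{W^{-1,s}(Q)}^r$ on $(0,T)$ gives
\begin{equation*}
\int_0^T\|g_n(t)-g(t)\|_{W^{-1,s}(Q)}^r\,\dd t\xrightarrow{n\to\infty}0
\end{equation*}
for every $r\in[1,\infty)$, which is the claimed strong convergence.

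I expect the main obstacle to lie in part (1), specifically in the passage from convergence on the countable dense set $\{t_j\}$ to convergence at every $t\in[0,T]$ in the right space, and in identifying the limit as an element of $C_{\mathrm{w}}([0,T];L^p(Q))$ rather than merely $C([0,T];W^{-1,q}(Q))$. The interplay between the strong topology on $W^{-1,q}(Q)$ (used to transport equicontinuity) and the weak topology on $L^p(Q)$ (in which convergence is actually claimed) is the delicate point, and it is here that reflexivity of $L^p(Q)$ together with a density argument in the test functions does the crucial work.
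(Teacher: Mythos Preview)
The paper does not give its own proof of this lemma; it simply refers to \cite[Chapter~6]{NASI}. Your sketch follows precisely the standard argument one finds there---an Arzelà--Ascoli/diagonal extraction in part~(1), combined with a density argument to pass from test functions in $W^{1,q'}_0(Q)$ to all of $L^{p'}(Q)$, and dominated convergence in part~(2)---so there is no meaningful methodological difference to discuss.

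One small imprecision is worth flagging. In part~(1) you write that the bounded ball of $L^p(Q)$ is ``relatively compact in the $W^{-1,q}(Q)$-norm'' as a consequence of reflexivity; this actually requires the embedding $L^p(Q)\hookrightarrow W^{-1,q}(Q)$ to be \emph{compact}, which the lemma does not assume (and which fails for certain pairs $(p,q)$). The cleanest repair is to run the diagonal argument not in $W^{-1,q}(Q)$ but at the scalar level: for a countable dense family $\{\phi_k\}\subset W^{1,q'}_0(Q)$, the real-valued functions $t\mapsto\int_Q g_n(t)\phi_k\,\dd x$ are uniformly bounded and equicontinuous on $[0,T]$, so Arzelà--Ascoli in $C([0,T])$ plus diagonalisation gives a common subsequence. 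The rest of your argument---identifying the limit in $L^p(Q)$ via reflexivity, the $3\varepsilon$ passage to all $t$, and the density upgrade to general $\phi\in L^{p'}(Q)$---then goes through unchanged. Part~(2) is correct as written.
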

\begin{proof}
    See e.g.~\cite[Chapter 6]{NASI}.
\end{proof}
We recall the following general version of the effective viscous flux lemma that was proven in \cite{NASI} in a more general setting.
\begin{theorem}\label{thm: Effective Viscous Flux}
    Let $T,\mu,\lambda>0$ and let $Q \subseteq \RR^3$ be a bounded domain. 
    Let $r,s \in (1,\infty)$ and $z \in (\frac{6}{5},\infty)$.
    For $n \in \NN$, let $\mathbf{q}_n \in C_{\mathrm{w}}([0,T];L^z(Q;\RR^3))$, $\uvec_n \in L^2(0,T;W^{1,2}_0(Q;\RR^3))$, $p_n \in L^r(0,T;L^r(Q))$, $\mathbf{F}_n \in L^s(0,T;L^s(Q;\RR^3))$, $f_n \in L^2(0,T;L^2(Q))$, $g_n \in L^\infty(0,T;L^\infty(Q))$ with $g_n \in C_{\mathrm{w}}([0,T];L^q(Q))$ for any $q \in [1,\infty)$ be given satisfying the convergences
    \begin{equation*}
        \begin{aligned}
            &\mathbf{q}_n \to \mathbf{q} \quad \text{in } C_{\mathrm{w}}([0,T];L^z(Q,\RR^3)),
            \quad 
            \uvec_n \weak \uvec \quad \text{in } L^2(0,T;W^{1,2}_0(Q;\RR^3))\\
            &p_n \weak p \quad \text{in } L^r(0,T;L^r(Q)), 
            \quad 
            \mathbf{F}_n  \weak \mathbf{F} \quad \text{in } L^s(0,T;L^s(Q;\RR^3)),\\
            &f_n \weak f \quad \text{in } L^2(0,T;L^2(Q)), 
            \quad 
            g_n \weakstar g \quad \text{in } L^\infty(0,T;L^\infty(Q)),\\
            & g_n \to g \quad \text{in } C_{\mathrm{w}}([0,T];L^q(Q)) 
            \quad \text{for any } q \in [1,\infty). 
        \end{aligned}
    \end{equation*}
    Assume that
    \begin{equation*}
        \partial_t \mathbf{q}_n 
        +
        \div(\mathbf{q}_n \otimes \uvec_n) 
        +
        \nabla p_n
        -
        \mu \Delta \uvec_n
        -
        (\lambda + \mu) \nabla( \div \uvec_n) 
        =
        \mathbf{F}_n 
        \quad \text{in } \mathcal{D}^\prime(Q_T;\RR^3)
    \end{equation*}
    and
    \begin{equation*}
        \partial_t g_n 
        +
        \div(g_n \uvec_n) 
        =
        f_n 
        \quad \text{in } \mathcal{D}^\prime(Q_T)
    \end{equation*}
    hold for any $n \in \NN$.\\
    Then we have the relation
    \begin{equation*}
        \begin{aligned}
            &\lim\limits_{n\to \infty} 
            \int_0^T \int_Q \eta(t) \phi(x) \biggl( p_n - (\lambda + 2\mu) \div\uvec_n \biggr)g_n\, \dd x \, \dd t\\
            &\qquad= \int_0^T \int_Q \eta(t) \phi(x) \biggl( p - (\lambda + 2\mu) \div \uvec\biggr) g \, \dd x \, \dd t
        \end{aligned}
    \end{equation*}
    for any test functions $\eta \in \mathcal{D}((0,T))$ and $\phi \in \mathcal{D}(Q)$.
\end{theorem}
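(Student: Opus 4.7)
The plan is to adapt the classical Lions--Feireisl proof of the weak continuity of the effective viscous flux to the present setting, in which the momentum equation contains an extra force $\mathbf{F}_n$. Since the identity to be proved is already localized in $(0,T)\times Q$ by the fixed cutoffs $\eta,\phi$, I would first multiply all sequences by cutoffs equal to $1$ on the support of $\eta\phi$ and extend them by zero to $\RR^3$, so that Fourier-multiplier operators may be applied globally.

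The key ingredients are the inverse-divergence operator $\curlyAvec = \nabla\Delta^{-1}$ (a multiplier of order $-1$) and the double Riesz transform $\curlyR_{ij} = \partial_i\partial_j\Delta^{-1}$ (of order $0$). I would test the momentum equation for $(\mathbf{q}_n,\uvec_n,p_n,\mathbf{F}_n)$ against $\Psivec_n := \eta(t)\phi(x)\,\curlyAvec[g_n](t,\cdot)$ and the limiting equation against $\Psivec := \eta\phi\,\curlyAvec[g]$. The pressure and viscosity terms then reproduce exactly the quantities appearing on the two sides of the claimed identity, up to terms involving $\nabla\phi$; these latter are of lower order because $\curlyAvec[g_n] \to \curlyAvec[g]$ strongly (by Calderón--Zygmund combined with the compact embedding $L^q(Q)\hookrightarrow\hookrightarrow W^{-1,q}(Q)$ applied to $g_n\to g$ in $C_{\mathrm{w}}([0,T];L^q)$).

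The non-trivial step is the analysis of the convective contributions. Transferring the time derivative and using the equation $\partial_t g_n = f_n - \div(g_n \uvec_n)$, the remainder from $\langle\partial_t\mathbf{q}_n,\Psivec_n\rangle$ combines with $\int\div(\mathbf{q}_n\otimes\uvec_n)\cdot\Psivec_n$ into the quadratic expression
\begin{equation*}
J_n \;=\; \int_0^T \int_Q \eta\phi\,\Bigl((\mathbf{q}_n)_i\,\curlyR_{ij}[g_n(\uvec_n)_j]\;-\;(g_n\uvec_n)_i\,\curlyR_{ij}[(\mathbf{q}_n)_j]\Bigr)\dd x\,\dd t,
\end{equation*}
plus genuinely lower-order remainders. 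The decisive point is to prove that $J_n$ converges to the corresponding expression with $(\mathbf{q},g,\uvec)$ in place of $(\mathbf{q}_n,g_n,\uvec_n)$. This is precisely the commutator identity of Coifman--Lions--Meyer--Semmes (equivalently an application of the div-curl lemma): the integrand is a commutator of Riesz transforms paired against the weakly convergent sequences, and under the hypotheses $\mathbf{q}_n\to\mathbf{q}$ in $C_{\mathrm{w}}([0,T];L^z)$ with $z>6/5$, $\uvec_n\weak\uvec$ in $L^2(0,T;W^{1,2}_0)$, and $g_n\to g$ in $C_{\mathrm{w}}([0,T];L^q)$, this commutator is compact in the required weak sense.

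The main obstacle is this commutator/compensated-compactness step, which is the heart of the Lions--Feireisl theory; all remaining contributions pass to the limit by routine weak--strong convergence. In particular the new force term contributes $\int\mathbf{F}_n\cdot\Psivec_n$, which converges to $\int\mathbf{F}\cdot\Psivec$ because $\mathbf{F}_n\weak\mathbf{F}$ in $L^s(L^s)$ while $\Psivec_n\to\Psivec$ strongly in $L^{s'}(L^{s'})$ locally, by the same compactness used above. Similarly $\mathbf{q}_n\cdot\curlyAvec[f_n]$ is handled by weak convergence of $f_n$ in $L^2$ against strong convergence of $\mathbf{q}_n\cdot\curlyAvec[\cdot]$. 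Thus $\mathbf{F}_n$ and $f_n$ introduce no genuine difficulty compared to the classical compressible Navier--Stokes case.
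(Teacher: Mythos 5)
The paper does not actually prove this theorem itself: its ``proof'' is the citation ``See [Proposition~7.36, NASI]''. Your sketch is a faithful outline of precisely the Lions--Feireisl argument carried out in that reference --- testing the momentum equation with $\eta\phi\,\nabla\Delta^{-1}[g_n]$, recovering $p_n g_n-(\lambda+2\mu)\div\uvec_n\,g_n$ from the pressure and viscosity terms, and resolving the convective/time-derivative remainder via the div--curl (Coifman--Lions--Meyer--Semmes) commutator, with the hypothesis $z>6/5$ entering exactly where you use it, namely through the compact embedding $L^z(Q)\hookrightarrow\hookrightarrow W^{-1,2}(Q)$ that lets the fixed-time commutator limit be paired against $\uvec_n\rightharpoonup\uvec$ in $L^2(0,T;W^{1,2}_0)$ --- so your approach coincides in substance with the one the paper relies on.
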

\begin{proof}
    See \cite[Proposition 7.36]{NASI}. 
    There the authors prove a more general statement.
\end{proof}
We close this section with the following standard result that gives a criterion for which one can guarantee that the product of a weakly convergent and a strongly convergent quantity converges weakly to the product of the corresponding limits.
\begin{lemma}\label{lem: Product Convergences}
    Let $T>0$ and $Q\subseteq \RR^d$ be a domain. Let $p,q\in(1,\infty)$ and let $p^\prime,q^\prime \in (1,\infty)$ denote the conjugate Hölder exponents, i.e.,
    \begin{align*}
        p^\prime := \frac{p}{p-1}, \quad q^\prime:= \frac{q}{q-1}.
    \end{align*}
    For $n \in \NN$ let $f_n \in L^p(0,T;W^{1,q}(Q))$ and $g_n \in L^{p^\prime}(0,T;W^{-1,q^\prime}(Q))$ such that
    \begin{align*}
        f_n \weak f \quad \text{in } L^p(0,T;W^{1,q}(Q), \qquad
        g_n \to g \quad \text{in } L^{p^\prime}(0,T;W^{-1,q^\prime}(Q)).
    \end{align*}
    Then we have 
    \begin{align*}
        f_ng_n \weak fg \quad \text{in } \mathcal{D}^\prime((0,T)\times Q).
    \end{align*}
\end{lemma}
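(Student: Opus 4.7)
The plan is to interpret the product $f_ng_n$ against a smooth test function as a duality pairing between $L^p(0,T;W^{1,q}_0(Q))$ and $L^{p'}(0,T;W^{-1,q'}(Q))$, and then to exploit the standard fact that if one factor converges strongly and the other weakly in a duality pair, their pairing converges.

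Fix a test function $\varphi \in \mathcal{D}((0,T)\times Q)$ and let $K \Subset Q$ denote the projection of $\mathrm{spt}\,\varphi$ to $Q$. The first step is to observe that multiplication by $\varphi(t,\cdot)$ defines a bounded linear operator
\begin{equation*}
    M_\varphi \colon L^p(0,T;W^{1,q}(Q)) \longrightarrow L^p(0,T;W^{1,q}_0(Q)), \qquad h \mapsto \varphi h,
\end{equation*}
by the product rule (the resulting function is supported in $\overline{K} \subset Q$, hence lies in $W^{1,q}_0(Q)$ for a.e.\ $t$), with operator norm controlled by $\|\varphi\|_{W^{1,\infty}((0,T)\times Q)}$. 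Since bounded linear operators preserve weak convergence, the hypothesis $f_n \weak f$ in $L^p(0,T;W^{1,q}(Q))$ yields
\begin{equation*}
    \varphi f_n \weak \varphi f \quad \text{in } L^p(0,T;W^{1,q}_0(Q)).
\end{equation*}

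The second step is to rewrite the target integral as the duality pairing
\begin{equation*}
    \int_0^T\!\!\int_Q f_n g_n \varphi \, \dd x\,\dd t
    = \int_0^T \bigl\langle g_n(t), \varphi(t,\cdot) f_n(t)\bigr\rangle_{W^{-1,q'}(Q),\,W^{1,q}_0(Q)}\,\dd t,
\end{equation*}
where the right-hand side makes sense because of the regularity provided by $M_\varphi$ above. Writing this pairing between the Bochner spaces $L^{p'}(0,T;W^{-1,q'}(Q))$ and $L^p(0,T;W^{1,q}_0(Q))$ in the form $\langle g_n, \varphi f_n\rangle$, we split
\begin{equation*}
    \langle g_n,\varphi f_n\rangle - \langle g,\varphi f\rangle
    = \langle g_n - g,\varphi f_n\rangle + \langle g, \varphi(f_n-f)\rangle.
\end{equation*}
The first term tends to zero because $g_n \to g$ strongly and $\|\varphi f_n\|_{L^p(W^{1,q}_0)}$ is uniformly bounded (weak convergence implies norm-boundedness); the second term tends to zero by the weak convergence established in the first step tested against the fixed element $g \in L^{p'}(0,T;W^{-1,q'}(Q))$. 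Combining these, we conclude
\begin{equation*}
    \int_0^T\!\!\int_Q f_n g_n \varphi\,\dd x\,\dd t \longrightarrow \int_0^T\!\!\int_Q fg\,\varphi\,\dd x\,\dd t,
\end{equation*}
which is the desired convergence $f_n g_n \weak fg$ in $\mathcal{D}^\prime((0,T)\times Q)$.

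The only mild subtlety, and what I would treat most carefully, is verifying that the multiplication operator $M_\varphi$ indeed maps into $W^{1,q}_0(Q)$ (so that the duality with $W^{-1,q'}(Q)$ is the correct one) and that the integrand in $t$ is a legitimate Bochner-integrable function; both follow from $\varphi \in C^\infty_c$ and the standard identification of pairings on $L^p$-Bochner spaces with pairings on their integrands, but these are the only places where one has to be careful.
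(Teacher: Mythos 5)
Your proposal is correct and follows essentially the same route as the paper: both arguments localize with the test function $\varphi$, use the identification $L^{p'}(0,T;W^{-1,q'}(Q))\simeq\bigl(L^p(0,T;W^{1,q}_0(Q))\bigr)^*$, and split the difference into a term controlled by the strong convergence of $g_n$ together with the uniform bound on $\varphi f_n$, plus a term handled by the weak convergence of $f_n$ tested against the fixed $g$. Your additional care in checking that multiplication by $\varphi$ maps into $W^{1,q}_0(Q)$ is a harmless elaboration of what the paper leaves implicit.
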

\begin{proof}
    Since $p,q \in (1,\infty)$, we have that
    \begin{align*}
        L^{p^\prime}(0,T;W^{-1,q^\prime}(Q)) \simeq \Bigl( L^p(0,T;W^{1,q}_0(Q)) \Bigr)^*.
    \end{align*}
    For $\varphi \in \mathcal{D}((0,T)\times Q)$ we thus have
    \begin{align*}
        &\langle f_n g_n - fg , \varphi \rangle_{\mathcal{D}^\prime,\,\mathcal{D}}
        =
        \langle f_n(g_n-g),\varphi\rangle_{\mathcal{D}^\prime,\,\mathcal{D}} 
        +
        \langle (f_n-f)g,\varphi \rangle_{\mathcal{D}^\prime,\,\mathcal{D}}
        \\
        &\leq
        \|g - g_n\|_{L^{p^\prime}(0,T;W^{-1,q^\prime}(Q))}\sup\limits_{n\in\NN}\|f_n\varphi\|_{L^p(0,T;W^{1,q}(Q))} \\
        &\qquad+  \langle g,(f_n-f)\varphi\rangle_{(L^p(0,T;W^{1,q}_0(\Omega))^\ast,\,L^p(0,T;W^{1,q}_0(\Omega))} \to 0
    \end{align*}
    as $n \to \infty$.
\end{proof}
\section*{Acknowledgements}
Funded by Deutsche Forschungsgemeinschaft (DFG, German Research Foundation) under Germany's Excellence Strategy - EXC 2075 - 390740016. We acknowledge the support by the Stuttgart Center for Simulation Science (SC SimTech).
C.~R.  acknowledges funding by the Deutsche Forschungsgemeinschaft (DFG, 
German Research Foundation) - SPP 2410 Hyperbolic Balance Laws in Fluid 
Mechanics: Complexity, Scales, Randomness (CoScaRa).
\bibliographystyle{plain}
\bibliography{sample}

@article {KMR,
    AUTHOR = {Keim, J. and Munz, C.-D. and Rohde, C.},
     TITLE = {A relaxation model for the non-isothermal
              {N}avier-{S}tokes-{K}orteweg equations in confined domains},
   JOURNAL = {J. Comput. Phys.},
    VOLUME = {474},
      YEAR = {2023},
     PAGES = {111830}
}

@incollection{Rohde_Elliptic,
  address = {Providence, RI},
  author = {Rohde, C.},
  booktitle = {Nonlinear partial differential equations and hyperbolic wave phenomena},
  pages = {315--337},
  publisher = {Amer. Math. Soc.},
  series = {Contemp. Math.},
  title = {A local and low-order {N}avier-{S}tokes-{K}orteweg system},
  volume = 526,
  year = 2010
}

@article{Rohde_Convolutional,
  author = {Rohde, C.},
  journal = {ZAMM Z. Angew. Math. Mech.},
  number = 12,
  pages = {839--857},
  title = {On local and non-local {N}avier-{S}tokes-{K}orteweg systems for liquid-vapour
 phase transitions},
  volume = 85,
  year = 2005
}

@article {HKMR,
    AUTHOR = {Hitz, T. and Keim, J. and Munz, C.-D. and Rohde,
              C.},
     TITLE = {A parabolic relaxation model for the
              {N}avier-{S}tokes-{K}orteweg equations},
   JOURNAL = {J. Comput. Phys.},
    VOLUME = {421},
      YEAR = {2020},
     PAGES = {109714, 24},
       DOI = {10.1016/j.jcp.2020.109714}
}

@article {FNP,
    AUTHOR = {Feireisl, E. and Novotn\'y, A. and Petzeltov\'a,
              H.},
     TITLE = {On the existence of globally defined weak solutions to the
              {N}avier-{S}tokes equations},
   JOURNAL = {J. Math. Fluid Mech.},
    VOLUME = {3},
      YEAR = {2001},
    NUMBER = {4},
     PAGES = {358--392},
       DOI = {10.1007/PL00000976},
}

@article {F,
    AUTHOR = {Feireisl, E.},
     TITLE = {Compressible {N}avier-{S}tokes equations with a non-monotone
              pressure law},
   JOURNAL = {J. Differential Equations},
    VOLUME = {184},
      YEAR = {2002},
    NUMBER = {1},
     PAGES = {97--108},
       DOI = {10.1006/jdeq.2001.4137},
}

@article {FP,
    AUTHOR = {Feireisl, E. and Petzeltov\'a, H.},
     TITLE = {On integrability up to the boundary of the weak solutions of
              the {N}avier-{S}tokes equations of compressible flow},
   JOURNAL = {Comm. Partial Differential Equations},
    VOLUME = {25},
      YEAR = {2000},
    NUMBER = {3-4},
     PAGES = {755--767}
}

@article {GLT,
    AUTHOR = {Giesselmann, J. and Lattanzio, C. and Tzavaras,
              A. E.},
     TITLE = {Relative energy for the {K}orteweg theory and related
              {H}amiltonian flows in gas dynamics},
   JOURNAL = {Arch. Ration. Mech. Anal.},
  FJOURNAL = {Archive for Rational Mechanics and Analysis},
    VOLUME = {223},
      YEAR = {2017},
    NUMBER = {3},
     PAGES = {1427--1484},
      ISSN = {0003-9527,1432-0673},
   MRCLASS = {35L65 (35L45 35Q35 35Q53 76N15)},
  MRNUMBER = {3594360},
MRREVIEWER = {Ramon\ G.\ Plaza},
       DOI = {10.1007/s00205-016-1063-2},
       URL = {https://doi.org/10.1007/s00205-016-1063-2},
}

@article {S,
    AUTHOR = {Simon, J.},
     TITLE = {Compact sets in the space ${L}^p(0,{T};{B})$},
   JOURNAL = {Ann. Mat. Pura Appl.},
    VOLUME = {146},
      YEAR = {1986},
     PAGES = {65--96}
}

@article{SE,
title = "Variations de grande amplitude pour la densite d'un fluide visqueux compressible",
journal = "Phys. D",
volume = "48",
number = "1",
pages = "113-128",
year = "1991",
author = "Serre, D."
}

@article{HI_P,
author = "Hillairet, M.",
year = "2007",
pages = "343-376",
title = "Propagation of Density-Oscillations in Solutions to the Barotropic Compressible {N}avier–{S}tokes System",
volume = "9",
journal = "J. Math. Fluid Mech."
}

@article {HB_NEW,
    author = "Bresch, D. and Hillairet, M.",
     title = "A compressible multifluid system with new physical relaxation
              terms",
   journal = "Ann. Sci. \'Ec. Norm. Sup\'er. (4)",
    volume = "52",
      YEAR = "2019",
    NUMBER = "2",
     PAGES = "255--295"
}

@article{HB_NOTE,
author = "Bresch, D. and Hillairet, M.",
year = "2015",
pages = "3429 - 3443",
title = "Note on the derivation of multi-component flow systems",
volume = "143",
number = "8",
journal = "Proc. Amer. Math. Soc."
}

@article{LT_CONT,
author = "Bresch, D. and Burtea, C. and Lagoutière, F.",
year = "2022",
pages = "4235-4266",
title = "Mathematical justification of a compressible bifluid system with different pressure laws: a continuous approach",
volume = "101",
number = "12",
journal = "Appl. Anal."
}

@article{LT_DISC,
author = "Bresch, D. and Burtea, C. and Lagoutière, F.",
year = "2023",
pages = "112259",
title = "Mathematical justification of a compressible bi-fluid system with different pressure laws: A semi-discrete approach and numerical illustrations",
volume = "490",
journal = "J. Comput. Phys."
}

@article {HSH_I,
    AUTHOR = {Hillairet, M. and Mathis, H. and Seguin,
              N.},
     TITLE = {Analysis of compressible bubbly flows. {P}art {I}:
              {C}onstruction of a microscopic model},
   JOURNAL = {ESAIM Math. Model. Numer. Anal.},
    VOLUME = {57},
      YEAR = {2023},
    NUMBER = {5},
     PAGES = {2835--2863}
}

@article {HSH_II,
    AUTHOR = {Hillairet, M. and Mathis, H. and Seguin,
              N.},
     TITLE = {Analysis of compressible bubbly flows. {P}art {II}:
              {D}erivation of a macroscopic model},
   JOURNAL = {ESAIM Math. Model. Numer. Anal.},
    VOLUME = {57},
      YEAR = {2023},
    NUMBER = {5},
     PAGES = {2865--2906}
}

@article {BR_HU,
    AUTHOR = {Bresch, D. and Huang, X.},
     TITLE = {A multi-fluid compressible system as the limit of weak
              solutions of the isentropic compressible {N}avier-{S}tokes
              equations},
   JOURNAL = {Arch. Ration. Mech. Anal.},
    VOLUME = {201},
      YEAR = {2011},
    NUMBER = {2},
     PAGES = {647--680}
}

@article {TZ,
    AUTHOR = {Tzavaras, A. E.},
     TITLE = {Oscillations in compressible {N}avier-{S}tokes and
              homogenization in phase transition problems},
   JOURNAL = {J. Hyperbolic Differ. Equ.},
    VOLUME = {21},
      YEAR = {2024},
    NUMBER = {3},
     PAGES = {827--843}
}

@incollection {AMW,
    author = "Anderson, D. M. and McFadden, G. B. and Wheeler, A. A.",
     title = "Diffuse-interface methods in fluid mechanics",
 booktitle = "Annu. Rev. Fluid Mech.",
    volume = "30",
     pages = "139-165",
 publisher = "Annu. Rev.",
      year = "1998",
   address = "Pavo Alto, CA"
}

@article{RLvW,
  author = {Rohde, C. and von Wolff, L.},
  journal = {SIAM J. Math. Anal.},
  number = 6,
  pages = {6155--6179},
  title = {Homogenization of non-local Navier-Stokes-Korteweg equations for compressible liquid-vapour flow in porous media},
  volume = 52,
  year = 2020
}

@article{RW,
      title={Mathematical Justification of a Baer$-$Nunziato Model for a Compressible Viscous Fluid with Phase Transition}, 
      journal ={arXiv:2504.10161},
      note ={Preprint},
      author={Rohde, C. and Wendt, F.},
      year={2025}
}

@article{CRW,
      title={Weak-Strong Uniqueness and Relaxation Limit for a Navier-Stokes-Korteweg Model}, 
      journal ={arXiv:2512.09719},
      note ={Preprint},
      author={Chaudhuri, N. and Rohde, C. and Wendt, F.},
      year={2025}
}

@book {EPDE,
    AUTHOR = {Evans, L. C.},
     TITLE = {Partial differential equations},
 PUBLISHER = {American Mathematical Society},
 ADDRESS   = {Providence, RI},
      YEAR = {2010},
}

@book{L,
    AUTHOR = {Lions, P.-L.},
     TITLE = {Mathematical topics in fluid mechanics. {V}ol. 2},
     PUBLISHER = {Oxford Science Publication},
     ADDRESS = {New York},
     YEAR = {1998},
}

@book{FN,
    AUTHOR = {Feireisl, E. and Novotn\`y, A.},
    TITLE = {Singular Limits in Thermodynamics of Viscous Fluids},
    PUBLISHER = {Birkhäuser},
    ADDRESS = {Basel},
    YEAR = {2009}
}

@book {PRG,
    AUTHOR = {Pedregal, P.},
    TITLE  = {Parametrized Measures and Variational Principles},
    PUBLISHER = {Birkhäuser},
    ADDRESS = {Basel},
    YEAR = {1997}
}

@book {PS,
    AUTHOR = {Plotnikov, P. and Sokołowski, J.},
    TITLE  = {Compressible Navier-Stokes Equations},
    PUBLISHER = {Birkhäuser},
    ADDRESS = {Basel},
    YEAR = {1997}
}

@book {NASI,
    AUTHOR = {Novotn\'y, A. and Stra\v{s}kraba, I.},
     TITLE = {Introduction to the mathematical theory of compressible flow},
 PUBLISHER = {Oxford University Press},
 ADDRESS   = {Oxford},
      YEAR = {2004},
}

@book {R,
    AUTHOR = {Rudin, W.},
     TITLE = {Real and complex analysis},
 PUBLISHER = {McGraw-Hill Book Co., New York-Toronto-London},
 ADDRESS   = {New York-Toronto-London},
      YEAR = {1966},
}

\end{document}